\numberwithin{equation}{section}
\newcommand{\MAT}[1]{\left(\begin{array}{*#1c}}
\newcommand{\mat}{\end{array}\right)}
\newcommand{\qed}{\leavevmode\unskip\nobreak\penalty200\hskip2pt\null
\nobreak\hfill\rule{1.1ex}{1.1ex}%\parfillskip=0pt
\medbreak }
\def\ds{\displaystyle}
\newcommand{\Id}{\mathbbm{1}}
\newcommand{\mi}{{-1}}
\newcommand{\rg}{\rightarrow}
\newcommand{\JR}{{\cal J}}
\newcommand{\J}[2]{{\cal J}_{#1#2}}
\newcommand{\LR}{{\cal L}}
\newcommand{\PR}{{\cal P}}
\newcommand{\BB}{{\cal B}}
\newcommand{\BP}{{\mathbb P}}
\newcommand{\BQ}{{\mathbb Q}}
\newcommand{\BX}{{\mathbb X}}
\newcommand{\BY}{{\mathbb Y}}
\newcommand{\BZ}{{\mathbb Z}}
\newcommand{\Sg}[2]{\Sigma^{(#1)}_{#2}}
\newcommand{\Tg}[2]{T^{(#1)}_{#2}}
\newcommand{\hTg}[2]{\hat T^{(#1)}_{#2}}
\newcommand{\iy}{\infty}
\newcommand{\pl}{\partial}
\newcommand{\al}{\alpha}
\renewcommand{\a}{\alpha}
\newcommand{\no}{\nonumber}
\newenvironment{proof}{\medskip\noindent{\it Proof:\/} }{\qed}
\newcommand{\la}{\langle}
\newcommand{\ra}{\rangle}
\newcommand{\dt}{\delta}
\newcommand{\Dt}{\Delta}
\newcommand{\vr}{\varepsilon}
\newcommand{\sg}{\sigma}
\newcommand{\BR}{{\mathbb R}}
\newcommand{\BE}{{\mathbb E}}
\newcommand{\tr}{\mbox{tr}}
\newcommand{\rr}{\operatorname{tr}}
\newcommand{\spec}{\operatorname{spec}}
\newcommand{\diag}{\operatorname{diag}}
\def\be#1\ee{\begin{equation}#1\end{equation}}
\def\bea#1\eea{\begin{eqnarray}#1\end{eqnarray}}
\def\bean#1\eean{\begin{eqnarray*}#1\end{eqnarray*}}
\newtheorem{definition}{Definition}[section]
\newtheorem{theorem}[definition]{Theorem}
\newtheorem{lemma}[definition]{Lemma}
\newtheorem{corollary}[definition]{Corollary}
\newtheorem{proposition}[definition]{Proposition}
\newtheorem{remark}[definition]{Remark}
\newtheorem{conjecture}[definition]{Conjecture}
\newdimen\squaresize
\newdimen\thickness
\newdimen\Thickness
\newdimen\ll! \newdimen \uu! \newdimen\dd! \newdimen \rr! \newdimen
\def\sq!#1#2#3#4#5{%
\ll!=#1 \uu!=#2 \dd!=#3 \rr!=#4
\setbox0=\hbox{%
%left edge
 \temp!=\squaresize\advance\temp! by .5\uu!
 \rlap{\kern -.5\ll!
 \vbox{\hrule height \temp! width#1 depth .5\dd!}}%
%
%up edge
 \temp!=\squaresize\advance\temp! by -.5\uu!
 \rlap{\raise\temp!
 \vbox{\hrule height #2 width \squaresize}}%
%
%down edge
 \rlap{\raise -.5\dd!
 \vbox{\hrule height #3 width \squaresize}}%
%
%right edge
 \temp!=\squaresize\advance\temp! by .5\uu!
 \rlap{\kern \squaresize \kern-.5\rr!
 \vbox{\hrule height \temp! width#4 depth .5\dd!}}%
%
%contents
 \rlap{\kern .5\squaresize\raise .5\squaresize
 \vbox to 0pt{\vss\hbox to 0pt{\hss $#5$\hss}\vss}}%
}%end of \hbox
 \ht0=0pt \dp0=0pt \box0
}%end of \sq!
\def\vsq!#1#2#3#4#5\endvsq!{\vbox to \squaresize{\hrule
width\squaresize height 0pt%
\vss\sq!{#1}{#2}{#3}{#4}{#5}}}
\newdimen \LL! \newdimen \UU! \newdimen \DD! \newdimen \RR!
\def\vvsq!{\futurelet\next\vvvsq!}
\def\vvvsq!{\relax
  \ifx     \next l\LL!=\Thickness \let\continue=\skipnexttoken!
  \else\ifx\next u\UU!=\Thickness \let\continue=\skipnexttoken!
  \else\ifx\next d\DD!=\Thickness \let\continue=\skipnexttoken!
  \else\ifx\next r\RR!=\Thickness \let\continue=\skipnexttoken!
  \else\def\continue{\vsq!\LL!\UU!\DD!\RR!}%
  \fi\fi\fi\fi
  \continue}
\def\skipnexttoken!#1{\vvsq!}
\def\place#1#2#3{\vbox to 0pt{\vss
\rlap{\kern#1\squaresize
  \raise#2\squaresize\hbox{$#3$}}
\vss}}
\renewcommand{\(}{\left(}
\renewcommand{\)}{\right)}
\newcommand{\geqs}{\geqslant}
\newcommand{\leqs}{\leqslant}
\newcommand{\s}[2]{s^{(#1)}_{#2}}
\newcommand{\x}[2]{{x^{(#1)}_{#2}}}
\renewcommand{\u}[2]{{u^{(#1)}_{#2}}}
\renewcommand{\v}[2]{{v^{(#1)}_{#2}}}
\newcommand{\y}[2]{{y^{(#1)}_{#2}}}
\renewcommand{\rq}{{(r,q)>(1,1)}}
\renewcommand{\b}[2]{b^{(#1)}_{#2}}
\newcommand{\hB}[2]{{\hat\BB}^{(#1)}_{#2}}
\newcommand{\B}[2]{{\BB}^{(#1)}_{#2}}
\newcommand{\pps}[2]{\pp{}{s^{(#1)}_{#2}}}
\newcommand{\ppb}[2]{\pp{}{b^{(#1)}_{#2}}}
\newcommand{\ppc}[1]{\pp{}{c_{#1}}}
\newcommand{\pp}[2]{\frac{\partial #1}{\partial #2}}
\newcommand{\p}[2]{\partial^{(#1)}_{#2}}
\newcommand{\cons}{{\sum_1^{p}\kappa_\ell\b \ell {1,2}=0}}
\newcommand{\prodindex}{{\renewcommand{\arraystretch}{0.6}
    \begin{array}{c} 
      \scriptstyle 1\leqs i\leqs  N\\ 
      \scriptstyle 1\leqs k\leqs m
    \end{array}}}
\begin{document}

%\abstract{  bla bla }

\begin{abstract}

Consider 
$N=n_1+n_2+\cdots+n_p$ non-intersecting Brownian motions on the real line,
 %$x_1(t)<x_2(t)<\ldots<x_N(t)$ on $\BR$ (Dyson's Brownian motions), all starting from the origin at time
starting from the origin at $t=0$, with $n_i$ particles forced to reach $p$ distinct target points $\beta_i$ at
time $t=1$, with $\beta_1<\beta_2<\cdots<\beta_p$. This can be viewed as a diffusion process in a sector of $\BR^N$. This work shows that the transition probability, that is the probability for the particles to pass
through windows $\tilde E_k$ at times $t_k$, satisfies, in a new set of variables, a non-linear PDE which can be expressed as a near-Wronskian; that is a determinant of a matrix of size $p+1$, with each row being a derivative of the previous, except for the last column. It is an interesting open question to understand those equations from a more probabilistic point of view. 

As an application of these equations, let the number of particles forced to the extreme points $\beta_1$ and $\beta_p$  tend to infinity; keep the number of particles forced to intermediate points fixed (inliers), but let the target points themselves go to infinity according to a proper scale. A new critical process appears at the point of bifurcation, where the bulk of the particles forced to $-\sqrt{n}$ depart
from those going to $\sqrt{n}$. These statistical fluctuations near that point of bifurcation are specified by a kernel, which is a rational perturbation of the Pearcey kernel. This work also shows that such equations are an essential tool in obtaining certain asymptotic results. Finally, the paper contains a conjecture.

\end{abstract}

 \newpage

\tableofcontents

% \maketitle

\section{Introduction}

Consider $N$ non-intersecting Brownian motions $x_1(t)<x_2(t)<\ldots<x_N(t)$ on $\BR$ (Dyson's Brownian motions),
all starting at source points $\gamma_1<\gamma_2<\cdots<\gamma_N$ at time $t=0$ and forced to target points
$\delta_1<\delta_2<\cdots<\delta_N$ at $t=1$. According to the Karlin-McGregor formula \cite{KM}, the probability
that the $N$ particles pass through the subsets $\tilde E_1,~\tilde E_2,\ldots , ~\tilde E_m\subset \BR$
respectively at times $ 0<t_1<t_2<\cdots<t_m<1$ is given by (setting $t_0:=0$ and $t_{m+1}:=1$),
%$N$ non-intersecting Brownian motions in $\BR$, the {\em gap probability} (i.e., the probability for the particles of avoiding the windows $E_k$ at times $t_k$) is given by
%
%
%
$$
\BP \left(
 \displaystyle \bigcap_{k=1}^m\left\{ \mbox{all}~ x_i(t_k)\in \tilde E_k\right\}
  \begin{tabular}{c|l}
    &\ $x_j(0)=\gamma_j,~~x_j(1)=\delta_j$,  \\
    %$\displaystyle \bigcap_{k=1}^m\left\{ \mbox{all}~ x_i(t_k)\in E_k\right\} $ 
    &\ \mbox{for}~$j=1,\dots,N$%,\\
     %&\ the paths are non-intersecting
  \end{tabular}\right)
  $$
$$
=\frac1{Z_N}\int_{\tilde E_1^N}\prod _{i=1}^N du_i^{(1)}
\int_{\tilde E_2^N}\prod _{i=1}^N du_i^{(2)}
\ldots
\int_{\tilde E_m^N}\prod _{i=1}^N du_i^{(m)}
 \det(p(t_1-t_0;\gamma_i,u^{(1)}_j))_{1\leqs i,j\leqs N}
$$
\be
\times \det(p(t_2-t_1;u^{(1)}_i,u^{(2)}_j))_{1\leqs i,j\leqs N}
\ldots
\det(p(t_{m+1}-t_m;u^{(m)}_i,\dt_j))_{1\leqs i,j\leqs N}
 \label{tr-pr}\ee
where $ p(t,x,y)$ denotes the standard Brownian transition probability,
\be
  p(t,x,y):= \frac{1}{\sqrt{ \pi t} }~e^{-\frac{(y-x)^2}{ t}}. \label{BrownianTransition}
\ee
% %
%
There has been a great deal of interest in non-intersecting Brownian motions and especially in some critical
infinite-dimensional diffusions arising when the number of particles $N\rg \iy$. This in turn has been motivated by
random matrix theory and Dyson's observation \cite{Dyson} that letting the entries of GUE matrices run according to independent
Ornstein-Uhlenbeck processes leads to such non-intersecting Brownian motions for the random eigenvalues of the
matrix.

When some source points and some target points coincide, the formula (\ref{tr-pr}) for the probability must be
adapted by taking appropriate limits; see \cite{KM,Johansson1,BleKui3,AMPearcey}. In this paper, we consider the situation where the source points all coincide
with $0$, while some target points may coincide. Consider thus $N=n_1+n_2+\cdots+n_p$ non-intersecting Brownian
motions %$x_1(t)<x_2(t)<\ldots<x_N(t)$ on $\BR$ (Dyson's Brownian motions), all starting from the origin at time
starting from the origin at $t=0$, with $n_i$ particles forced to reach $p$ distinct target points $\beta_i$ at
time $t=1$, with $\beta_1<\beta_2<\cdots<\beta_p$ in $\BR$; see Figure \ref{fig1}.

Given positive integers $n=(n_1,\ldots,n_p)$, given $m$ subsets $\tilde E_1,\dots,\tilde E_m\subset\BR$ and times
$t_0=0<t_1<t_2<\cdots<t_m<t_{m+1}=1$, this paper deals with the

%\newpage

%\vspace*{-5cm} k

\noindent {\em probability}\footnote{$\tilde\BE=\tilde E_1\times\ldots\times\tilde E_m.$}
$\BP^{(\beta)}_n(t,\tilde\BE)$, as in (\ref{prob-model}) below (i.e., the probability for the particles to pass
through the windows $\tilde E_k$ at times $t_k$); as is well-known, (see \cite{Zinn2,BleKui3,TW-Pear,AMPearcey}), $\BP^{(\beta)}_n(t,\tilde\BE)$ can also be
viewed as the probability for the eigenvalues of a chain of $m$ coupled Hermitian random matrices, after some change of
variables:
\bea 
%  \lefteqn{\BP_n(\BE)}\no\\ &:=& 
 \BP^{(\beta)}_n(t,\tilde\BE)&:=&\BP\left({\displaystyle\bigcap_{k=1}^m\left\{\mbox{all}~x_i(t_k)\in\tilde E_k\right\} }
 \Bigr| 
  \begin{array}{l}
    \mbox{all $x_i(0)=0;$}\\
     \mbox{$n_j$ paths end up at $\beta_j$ at $t=1$,}\\
     \mbox{for $1\leqs j\leqs p$}
   %,\\
  % &\ the paths are non-intersecting
\end{array}\right) \no\\
&=&\frac1{\tilde Z_n}\int_{\spec(M_k)\in E_k}e^{-\frac12\tr
       \left(\sum\limits_{k=1}^mM_k^2-2\sum\limits_{k=1}^{m-1}c_kM_kM_{k+1}-2AM_m\right)}\prod_{k=1}^m dM_k
\no\\
&=:&\BP^A_n(c,\BE). \label{prob-model}\eea
The change of variables is given by the following formulae\footnote{\label{foot6}For $m=1$, the matrix integral
  above becomes a one-matrix integral with external potential. The change of variables below becomes:
$ 
  b_\ell=\sqrt{\frac{2t}{1-t }}\beta_\ell,~~
 E=%E_k\sqrt{\frac2{\sigma_k}}=
 \tilde E\sqrt{\frac{2 }{t(1-t)}}.
$ 
},
$$
 A:= \diag(\overbrace{b_1,\dots,b_1}^{n_1},\overbrace{b_2,\dots,b_2}^{n_2},\dots,\overbrace{b_p,\dots,b_p}^{n_p})
  ,~ \mbox{with}~b_\ell=\sqrt{\frac{2(t_m-t_{m-1})}{(1\!-\!t_m)(1\!-\!t_{m-1})}}\beta_\ell $$
\be
E_k: %E_k\sqrt{\frac2{\sigma_k}}=
% E_k\sqrt{
% 2\left( \frac{1}{t_k-t_{k-1}}+\frac{1}{t_{k+1}-t_{k} }\right)     
% }
=\tilde E_k\sqrt{\frac{2(t_{k+1}-t_{k-1})}{(t_k-t_{k-1})(t_{k+1}-t_k)}}
%\tilde b_\ell=\sqrt{\frac{2(t_m-t_{m-1})}{(1-t_m)(1-t_{m-1})}}b_\ell
,~~~
c_k^2:=\frac{ (t_{k+2}-t_{k+1}) (t_{k}-t_{k-1})}{(t_{k+2}-t_{k})(t_{k+1}-t_{k-1})}
%\frac{ (t_{k}-t_{k-1})}{(t_{k+1}-t_{k-1})}
,\label{change of variables}\ee
for $\ell=1,\dots,p$ and $k=1,\dots,m$. It is quite natural to impose a {\em linear constraint on the rescaled
target points} $\beta_1,\ldots , \beta_p$, namely 
\be {\sum_{\ell=1}^{p}}\kappa_\ell \beta_\ell=0,~\mbox{with}~{\sum_{\ell=1}^{p}}\kappa_\ell=1,~~\mbox{set}~
\kappa_0:=-1. \label{constraint} 
\ee
Of course, the same relation holds for the $b_i$'s. 
For instance, a typical situation is to take $\beta_1=-\beta_p$ and have all the remaining target points in arbitrary
position between $\beta_1$ and $\beta_p$. This case will be discussed in Section \ref{section:example}.

The natural initial or rather ``{\em final condition}" for the transition probability (\ref{prob-model}) is given by what happens when $t_m\rg 1$, keeping $t_1,\dots,t_{m-1}$, away from $0$ or $1$; namely, 
\be
  \lim_{t_m\rg 1} \BP^{(\beta)}_n(t,\tilde\BE)
 = 0\mbox{, when}~\tilde E_m ~  /  \!\!\!\!\!\supset \{\beta_1,\ldots,\beta_p\}.
\label{fc}  \ee

It is also known (see (\cite{TW-Pear})) that the probability above $\BP^{(\beta)}_n(t_1,\ldots,t_{m},\tilde E_1\times\ldots\times\tilde E_m)=\det(\Id-\raisebox{1mm}{$\chi$}{}_{_{\widetilde E_i^c}}(x)%\Id_{\tilde E_i}(x)   
        H^{(N)}_{t_it_j}(x,y)\raisebox{1mm}{$\chi$}{}_{_{\widetilde E_j^c}}(y))$ can be expressed as a matrix Fredholm determinant of a matrix kernel 
\footnote{\label{footMatrixFred}
  The Fredholm determinant of a matrix kernel $\widehat H_{t_it_j}(x,y):=\chi_{E_i}(x)H_{t_it_j}(x,y)\chi_{E_j}(y)$:
 \bean && \hspace*{-.8cm} {  \det
  \left( I-z (\widehat H
  %^{\left\{{ H,n} \atop A\right\}}
  _{t_it_j})_{1\leqs i,j\leqs m}
   \right) 
} %&=& 1+ \sum_{N=1}^{\iy}{(-z)^N}  \sum_{{0\leqs r_i\leqs N}\atop{\sum_1^mr_i=N}}
         \\
%       ~~~~~~~~~~~~~~~~~~~~~\hspace{3cm}\\
%
%
  &&
  ~~~~~
 \hspace*{-1.5cm}= \!1\!+\! \sum_{n=1}^{\iy}{(-z)^n}
 \!\!\!\! \sum_{{0\leqs r_i\leqs n}\atop{\sum_1^mr_i=n}} {\int_{\cal R} }~~\prod_1^{r_1} d\al_i^{(1)}
   \ldots
   %\left.
   \prod_1^{r_m} d\al_i^{(m)}
   %d\al^{(1)}\ldots d\al^{(m)}
   \det\left( \Bigl(
   \widehat H%{\left\{{ H,n} \atop A\right\}}
    _{t_kt_{\ell}}(\al_i^{(k)},\al_j^{(\ell)})
    \Bigr)_{ {1\leqs i \leqs r_k}\atop {1\leqs j \leqs r_{\ell}}  }
 \right)_{1\leqs k,\ell\leqs m} ,%\right|_{z=1}.
\eean
where the $n$-fold integral in each term above is taken over the
range
$$\cal R=
 { \left\{\begin{array}{c}
          -\iy <\al_1^{(1)}\leqs \ldots\leqs
 \al_{r_1}^{(1)}<\iy\\
           \vdots\\
     -\iy <\al_1^{(m)}\leqs \ldots\leqs \al_{r_m}^{(m)}<\iy\\
         \end{array}\right\} }.$$
}
 \bean
 % \lefteqn{H_n^{(p)}(x,y)dy }
 %\label{BaikKernel} %\no\\
%
%\lefteqn{H_{t_k,t_\ell}^{(N)}(x,y; \beta_1,\ldots,\beta_p)dy}\\
H_{t_k,t_\ell}^{(N)}(x,y; \beta_1,\ldots,\beta_p)dy &=&
 -\frac{dy}{2 \pi^2 \sqrt{(1\!-\!t_k)(1\!-\!t_\ell)}}
  \int_{\mathcal{C}} dV
\int_{\Gamma_{L}} dU ~
\frac{e^{ -\frac{ t_kV^2}{1-t_k}  +
\frac{2xV}{1-t_k} }}
{e^{ \frac{-t_\ell U^2}{1-t_\ell} +\frac{2yU}{1-t_\ell} }}
\no\\ %\no\\
&&~~~~~~ \times
\prod_{r=1}^p\bigg(\frac{U-\beta_r}{V-\beta_r}\bigg)^{n_r}
\frac{1}{U-V} \no%\\ \no\\
\eean

\bean &&-\left\{ \begin{array}{l}
            0,\,~~~~~~~~~~~~~~~~~~~~~~~~~~~~~~~~~~~~~
            \mbox{for}~~t_k\geqs
            t_\ell\\  %\\  
             \frac{dy}{\sqrt{\pi (t_\ell-t_k)}}
            e^{-\frac{(x-y)^2}{t_\ell-t_k}}
            e^{\frac{x^2}{1-t_k}-\frac{y^2}{1-t_\ell}},~~~~~
            \mbox{for}~~t_k<t_\ell
            \end{array}\right.    %\label{BM kernel}
\eean
\be \label{BM kernel}
\ee
where ${\cal C}$ is a closed contour enclosing all the points $\beta_r$, which is to the left of the line
$\Gamma_L:=L+i\BR$ by picking $L$ large enough, guaranteeing $\Re e(U-V)>0$.

\begin{figure}
\vspace*{-2cm}\hspace*{-0.0cm}
\includegraphics[width=140mm,height=100mm]{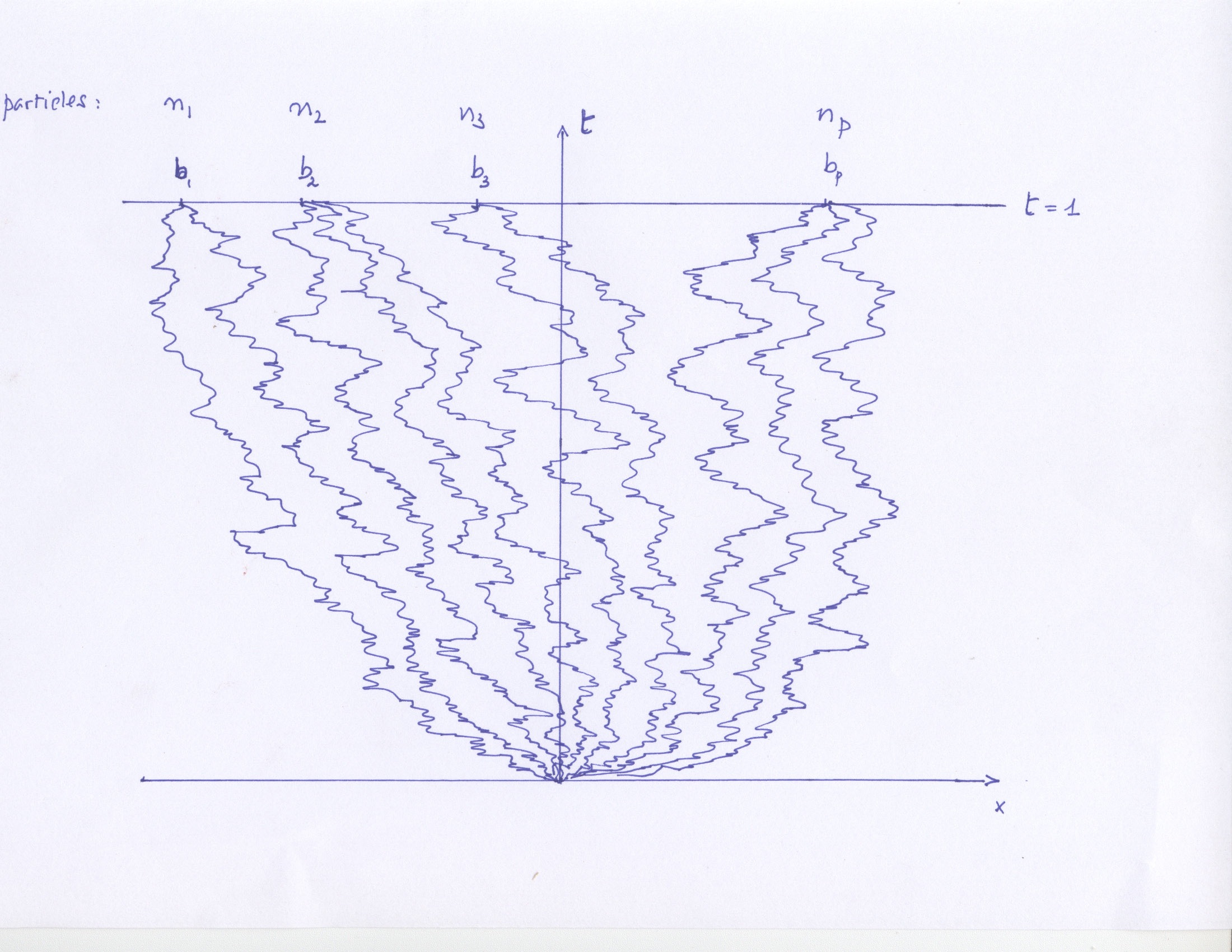}
\vspace*{-2cm}
\caption{Non-intersecting Brownian motions}\label{fig1}
\end{figure}

These non-intersecting Brownian motions $x_1(t)<\ldots<x_N(t)$ describe a diffusion process in a sector
$\{x_1<x_2<\ldots<x_N\}$ of $\BR^N$ and thus satisfy a diffusion equation. When the number $N$ of particles tends
to $\iy$, the transition probability would have to satisfy an ``{\em infinite-dimensional diffusion equation}",
which however would be very difficult to use. The main result of this paper is to show that this transition
probability $\BP^A_n(c,\BE)$ satisfies a non-linear PDE in the boundary points of $E_1,\ldots,E_m$, the target
points $b_1,\ldots,b_p$, and the couplings $c_{1},\dots,c_{m-1}$. It is the determinant of a certain matrix of size
$p+1$; $p$ being the number of target points; so, when the number of particles tends to~$\iy$, the form of this
equation remains the same, which will be exploited in the limit discussed in Theorem \ref{Th: main}. Moreover, this
determinant misses to be a {\em Wronskian} by the last column only.  %
 
The PDE for the transition probability stems largely from integrable theory; this at least is our approach in the
present paper. The integrable theory behind non-intersecting Brownian motions has been developed by us in
\cite{AMVmops}; the latter contains many different ingredients; among them, multi-component KP hierarchies
\cite{UT,MR2150191} and multiple-orthogonal polynomials \cite{AvM-Moser,Van Assche1,BleKui3}.  It is -- in our
opinion~-- an {\em interesting open question} to understand the PDE from a {\em more probabilistic point of view}
and to use more conventional probabilistic tools to derive them. 
%Some of these results have been announced without proof in \cite{AMVmops}.

\medbreak
 
Throughout the paper, we shall use, without further warning, the following {\bf notation}: (i) The inverse of the
following Jacobi matrix will play an important role: \be \J{}{}\,:=
  \begin{pmatrix}
    -1&&c_1&&&&\\
    &&&\ddots&&0&\\
    c_1&&-1&&\ddots&&\\    &\ddots&&\ddots&&\ddots&\\
    &&\ddots&&-1&&c_{m-1}\\
    &0&&\ddots&&&\\
    &&&&c_{m-1}&&-1
  \end{pmatrix}^{-1}.\label{Jacobi}
\ee
(ii) For any given vector $u=(u_1,\ldots, u_\alpha)$, we denote by
\be
 \pl_u:=\sum_{i=1}^\alpha \frac{\pl}{\pl u_i},~~~ \vr_u:=\sum_{i=1}^\alpha u_i\frac{\pl}{\pl u_i}.
 \label{notation}\ee 
In particular, given any interval or disjoint union of intervals $E=\cup_{i=1}^r [z_{2i-1},z_{2i}]$, we denote by
\bea\label{eq:sum_and_eul}
  \begin{array}{l}
    \pl_E:=\left\{
      \begin{array}{l}
        \mbox{sum of partials in the}\\
        \mbox{boundary points of}~ E
      \end{array}\right\}
      =\sum_{i=1}^{2r} \frac{\pl}{\pl z_i} \\
      \vr_E:=\left\{
  \begin{array}{l}
    \mbox{Euler operator in the}\\ 
    \mbox{boundary points of}~ E
  \end{array}\right\}
   =\sum_{i=1}^{2r} z_i\frac{\pl}{\pl z_i}.
\end{array}\eea
(iii) In view of the Theorem below, given $b=(b_1,\ldots,b_{p-1})$ and subsets~$E_i$, define the linear
differential operators:
\bea%\label{def:pb}
  \pl^{(\ell)}_b&:=&\sum_{i=1}^{p-1}(\kappa_\ell-\delta_{\ell,i})\frac{\pl}{\pl b_i},
  ~~\pl_{b}^{(0)}:=0,~~\mbox{implying}~\sum_{\ell=1}^p\pl_{b}^{(\ell)}=0,\no\\
  \pl_\ell&:=&\pl_{b}^{(\ell)}-\kappa_\ell \sum_{i=1}^m \pl_{_{\!  E_i}}\times 
    \left\{\begin{array}
       {l}{\cal J}_{1i}\mbox{~~~for }\ell=0,\\ {\cal J}_{mi}\mbox{~~for }1\leqs \ell\leqs p,
    \end{array}\right.\no\\
  \vr_b&:=& \sum_1^{p-1}  b_i\frac{\pl}{\pl b_i},\no\\
  \vr_0~&:=&\vr_{_{\! E_1}}-\dt_{1,m}\vr_{_{\! b}}-c_{ 1}\pp{}{c_{ 1}},~~
  \vr_m:=\vr_{_{\! E_m}}-\vr_{_{\! b}}-c_{m- 1}\pp{}{c_{m- 1}}.\label{diff-op}
\eea
For brevity in the statement of the Theorem, set ${}^{\prime}:=\pl_0=\sum_1^m{\cal J}_{1i} \pl_{_{\!E_i}}$.

\begin{theorem}\label{Theo:1.1}
The probability $\BP_n:=\BP^A_n(c,\BE)$, as in (\ref{prob-model}), with the linear constraint (\ref{constraint}) on
the rescaled target points, satisfies a non-linear PDE in the boundary points of the subsets $E_1,\dots, E_m$ and
in the target points $b_1,\dots,b_p$; it is given by the determinant of a $(p+1)\times (p+1)$ matrix, nearly a
Wronskian for the operator ${}^{\prime}:=\pl_0$,
\be
  \det \(
    \begin{array}{ccccccccccc}
      F_1&F_2&F_3&\ldots&F_ p&G_0\\
      F_1'&F_2'&F_3'& \ldots&F_ p'&G_1\\
      F_1'' &F_2''&F_3''& \ldots&F_ p''&G_2\\
      \vdots&\vdots&\vdots&&\vdots&\vdots\\
      F_1^{(p )} &F_2^{(p )}&F_3^{(p )}& \ldots&F_ p^{(p )}&G_p \\
    \end{array}
    \)=0,
\label{PDE}\ee
where the $F_\ell$ and $G_\ell$ are given by%\footnote{In accordance with (\ref{notation}), given $b=(\tilde b_1,\ldots,\tilde b_{p-1})$, one sets $$}:% for the value of $C_\ell$, see (\ref{for:CL})).
\bea\label{F,G}
  F_{\ell}&=&-\pl_0\pl_\ell\ln\BP_n-n_\ell\J 1m,\no\\
  G_{\ell+1}&:=&\pl_0G_\ell+\sum_{i=1}^p(\pl_0)^\ell F_i\(\pl_0\frac{H_i^{(1)}}{F_i}-\pl_i\frac{H_i^{(2)}}{F_i}\), ~~~~G_0:=0,\no\\
  H_{\ell}^{(1)}&:=&(\kappa_\ell(\delta_{1,m}-\vr_m)\pl_0+2\J1m \pl_b^{(\ell)})\ln\BP_n+C_\ell,\\
  H_{\ell}^{(2)}&:=&(\delta_{1,m}-\vr_0+2\J1m  b_\ell \pl_0)\pl_\ell\ln\BP_n,\no
\eea
with
\be
   C_\ell:=2n_\ell\J 1m\(\J mm  b_\ell -\sum_{i\neq\ell}\frac{n_i}{ b_\ell- b _i }\).
 \ee
 The {\em final condition} (\ref{fc}) translates into an `` {\em initial condition}" near $c_{m-1}\rg 0$ and $b_\ell \rg \iy,$  %$,~E_m$ and $b_\ell$, which for $t_m\rg 1$ behave like, 
 upon using the fact that %Up to a multiplicative factor dependent on $t_{m-1}$ and $t_{m-2}$, one has
$$
c_{m-1}\simeq \sqrt{1-t_m}, ~~~c_{m-1}b_\ell\simeq O(1).%,~~~\mbox{~~and~~} c_{m-1}E_m \simeq \tilde E_m.
$$
%$$
% \lim_{c_{m-1}\rg 0} \BP_n =0 ~~\mbox{if}~c_{m-1}  E_m~  /  \!\!\!\!\!\supset \{c_{m-1}b_1,\ldots,c_{m-1}b_p\}.
%$$
\end{theorem}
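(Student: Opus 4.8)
The plan is to follow the integrable-systems route announced in the text, building on the structure established in \cite{AMVmops}. The starting point is to realize $\BP_n=\BP^A_n(c,\BE)$ of (\ref{prob-model}) as the value, at vanishing deformation parameters, of a $\tau$-function $\tau_n(t^{(1)},t^{(2)},\dots)$ of the multi-component KP hierarchy \cite{UT}: one deforms each of the $p$ weights $e^{-x^2+b_\ell x}$ (one per target point) by an exponential $e^{\sum_i t^{(\ell)}_i x^i}$, so that the coupled matrix integral becomes a $p$-component KP $\tau$-function indexed by the multi-index $n=(n_1,\dots,n_p)$. The probability is then a ratio of such $\tau$-functions, so that $\ln\BP_n$ differs from $\ln\tau_n$ only by terms explicit in the physical variables; this accounts for the additive constants ($n_\ell\J1m$, $C_\ell$) later appearing in (\ref{F,G}).

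The second ingredient is the pair of structural equations obeyed by $\tau_n$. On the one hand, the $p$-component KP hierarchy supplies a Hirota bilinear identity; expanding it in the relevant spectral parameter along one distinguished flow and dividing by $\tau_n^2$ produces a $(p+1)\times(p+1)$ determinantal relation whose first $p$ columns are successive derivatives along that flow — this is precisely the near-Wronskian skeleton of (\ref{PDE}), the size $p+1$ being forced by the $p$ components. On the other hand, invariance of the matrix integral under the reparametrizations $x\mapsto x+\epsilon\, x^{k+1}$ of the eigenvalues (together with the constant and linear shifts) yields Virasoro constraints: operators annihilating $\tau_n$ that express each low-order time-derivative $\partial\ln\tau_n/\partial t^{(\ell)}_i$ as a combination of the physical operators acting on $\ln\BP_n$ — the boundary operators $\partial_{E_i}$, the Euler operators $\vr_{E_i}$, the target derivatives $\partial/\partial b_\ell$, and the coupling operators $c_k\,\partial/\partial c_k$. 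The composite operators $\partial_0,\partial_\ell,\vr_0,\vr_m$ of (\ref{diff-op}) are exactly the images of the first KP flows under this dictionary, and the linear constraint (\ref{constraint}) on the $b_\ell$ (through $\sum_\ell\kappa_\ell=1$) is what makes the dictionary close, being responsible for the identity $\sum_{\ell=1}^p\partial_b^{(\ell)}=0$ and for the reduction to the $p-1$ independent target directions.

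With both ingredients in hand, the derivation proceeds by substituting the Virasoro dictionary into the bilinear determinantal identity and then setting all deformation times to zero. Each time-derivative of $\ln\tau_n$ turns into a physical derivative of $\ln\BP_n$ plus an explicit polynomial/rational remainder coming from the non-derivative part of the Virasoro operators; collecting these, the entries of the first $p$ columns become the $F_\ell=-\partial_0\partial_\ell\ln\BP_n-n_\ell\J1m$, while the remainder terms assemble into the inhomogeneous last column via the recursion defining $G_{\ell+1}$ in (\ref{F,G}), with $H^{(1)}_\ell,H^{(2)}_\ell$ and the constants $C_\ell$ being the precise translated pieces. Finally, the final condition (\ref{fc}) for $t_m\rg 1$ is transported through the change of variables (\ref{change of variables}), under which $c_{m-1}\simeq\sqrt{1-t_m}\rg 0$ while $c_{m-1}b_\ell=O(1)$, giving the stated initial condition.

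The main obstacle is this middle step: carrying the abstract Hirota bilinear identity through the Virasoro dictionary and verifying that the result organizes \emph{exactly} into the stated near-Wronskian, with the correct recursion for the $G_\ell$ and the correct constants $C_\ell$ and auxiliary functions $H^{(i)}_\ell$. This is a long but in principle mechanical bookkeeping computation, delicate because one must simultaneously track the derivative parts (which feed the Wronskian columns) and the numerous non-derivative remainder terms (which feed the last column), using the constraint (\ref{constraint}) repeatedly to eliminate the redundant $b_p$-direction and to keep the operators $\partial_\ell$ and $\partial_b^{(\ell)}$ mutually consistent across the two structural equations.
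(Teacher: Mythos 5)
Your overall skeleton --- deform the coupled matrix integral into a multi-component KP tau function, derive Virasoro constraints, and restrict everything to the locus $\LR$ of vanishing deformation parameters --- is indeed the paper's announced strategy, and your dictionary (KP time derivatives $\leftrightarrow$ the operators $\pl_0,\pl_\ell,\vr_0,\vr_m$, with the constraint (\ref{constraint}) making it close) is accurate. But the mechanism you propose for producing the determinant (\ref{PDE}) is not the paper's, and as stated it has a genuine gap. The KP input actually used is \emph{not} a Hirota bilinear identity whose expansion ``along one distinguished flow'' directly yields a $(p+1)\times(p+1)$ determinantal relation; it is the pair of relations (\ref{C008}) of Proposition \ref{lemma 2.3}, and these involve the \emph{shifted} tau functions $\tau_{n\pm e_\ell}$: each expresses a derivative of $\ln(\tau_{n+e_\ell}/\tau_{n-e_\ell})$ as a ratio of second derivatives of $\ln\tau_n$. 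Your proposal never confronts these shifted tau functions, which are extra unknowns. The paper eliminates the ratio by applying $\hB{\ell}{1}$ to one relation and $\hB{0}{1}$ to the other and using that these operators commute (Proposition \ref{lemma 2.4}); what survives is, for each $\ell$, the single equation
\[
\left\{X_\ell,F_\ell\right\}_{\pl_0}=\left\{H^{(1)}_\ell,F_\ell\right\}_{\pl_0}-\left\{H^{(2)}_\ell,F_\ell\right\}_{\pl_\ell},
\qquad
X_\ell:=\left.\pl^{(\ell)}_{b_2}\pl_0\ln\tau_n\right|_{\LR},
\]
in which $X_\ell$ is a derivative in an \emph{unphysical} deformation direction (the parameters $\b{\ell}{2}$), hence not expressible through $\BP_n$.

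This points to the second missing idea: one is left with $p$ equations in $p$ unknown functions $X_1,\dots,X_p$, and the PDE (\ref{PDE}) is the \emph{compatibility condition} of this overdetermined system. Because the deformation parameters $\b{\ell}{2}$ are subjected to the same linear constraint as the physical target points, one has $\sum_{\ell=1}^p\pl^{(\ell)}_{b_2}=0$, hence $\sum_{\ell=1}^p X_\ell=0$; Proposition \ref{prop:7.1} --- a self-contained ODE/linear-algebra lemma, independent of any integrable structure --- solves each equation $\{X_\ell,F_\ell\}=H_\ell$ for $X_\ell$, and imposing $\sum_{\ell=1}^p X_\ell=0$ on these solutions is exactly what produces the size-$(p+1)$ determinant, the Wronskian columns $F_\ell^{(i)}$, and the recursion $G_{i+1}=G_i'+\sum_{\ell=1}^p H_\ell F_\ell^{(i)}/F_\ell^{2}$ for the last column. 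So the last column is not an assembly of ``Virasoro remainder terms'': the constants $n_\ell\J{1}{m}$ and $C_\ell$ do come from the Virasoro dictionary together with the explicit evaluation (\ref{eq:int_over_R}) of $\tau_n(\BR^m)$, but the structure of the $G$'s comes from the elimination of the $X_\ell$'s. Without introducing these auxiliary unknowns and eliminating them, the Virasoro dictionary plus the KP relations simply do not close into an equation in the physical variables, so the step you defer to as ``in principle mechanical bookkeeping'' is precisely where the essential idea of the proof lives.
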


As a special case, we consider the {\em one-time probability} $\BP^{(\beta)}_n(t,\tilde E)$ for
$0<t_1=t<1$. For this case, (\ref{prob-model}) becomes a one-matrix model with external potential
$\BP_n:=\BP^A_n(E)$, thus with no coupling. The expressions for (\ref{F,G}) can be replaced by simpler expressions;
note that the $H_\ell^{(1)}$ in (\ref{1.14}) below are not obtained from the $H_\ell^{(1)}$, as in (\ref{F,G}), by setting $m=1$;
in fact, a further simplification occurs in the equations; also the functions $G_\ell$ are only specializations of
the above $G_\ell$ up to a sign $-(-1)^{\ell}$ and ${}'$ now denotes $\pl_E$ instead of $\pl_0=-\pl_E$. In this
statement, we use the operator $\pl_b^{(\ell)}$ as in (\ref{diff-op}), and we use the following simple operator, in
accord with (\ref{notation}):
$$
%  \pl_E %={}^\prime
% ~~\mbox{and}~~ 
  \vr := \vr_E-\vr_b,~~\mbox{with}~ \vr_b=\sum_1^{p-1} b_i\frac{\pl}{\pl b_i}.
 $$

\begin{corollary}\label{Cor:1.2}
% \begin{equation}\begin{array}{l}\pl:=
%\left\{\begin{array}{l}\mbox{sum of partials
%in the}\\\mbox{boundary points of}~ E\end{array}\right\}\\ \\\vr:=\left\{\begin{array}{l}\mbox{Euler operatorin the}\\ \mbox{boundary points of}~ E\end{array}\right\}-\sum_1^{p-1} b_i\frac{\pl}{\pl b_i} \end{array}\mbox{ and }  \left\{\begin{array}{l} \pl_{\ell}^{(b)}:= -\frac{\pl}{\pl b_\ell}(1-\delta_{\ell p})+\kappa_\el \sum_1^{p-1}\frac{\pl}{\pl b_i}, \\ \\ \mbox{implying}~\sum_{\ell=1}^ \pl_{\ell}^{(b)}=0 \end{array}\right.  .\label{definitions}\end{equation}
%
When $m=1$ (the one-time case), then $\ln\BP_n=\ln\BP^A_n(E)$ satisfies the same non-linear PDE
(\ref{PDE}), but with simpler expressions $F_\ell$ and $H_{ \ell }^{(1)}$ and with ${}'=\pl_{_{E}}$,
%{\em second order differential operators} acting on $\ln \BP_n$  and where $G_\ell$ are given by
%\footnote{The value of the constant in the formula below equals $C_\ell=-2n_\ell \left((1-\kappa_\ell)b_\ell+\sum_{j\neq \ell}\frac{n_j}{b_\ell-b_j}\right)%\label{7.4.5} \end{eqnarray}.$ \label{footnote 21}}, 
%upon setting ${}'=\pl_E=\pl$ and $\vr=\vr_E-\vr_b$,% (for
%$C_\ell$ see footnote \ref{footnote 21})
%(\ref{7.4.5}), (\ref{7.4.6}) and
%(\ref{7.4.8}).
%
%
%begin{eqnarray}
\bea  \label{F,G-one-time}
  F_{\ell} &:=&\left(  \pl^{(\ell)}_b +\kappa_\ell\pl_{_{\! E}}\right)\pl_{_{\! E}} \ln\BP_n+n_{\ell},\no\\ 
  \bar H_{ \ell }^{(1)} &:=&\left(-\kappa_\ell\pl_{_{\! E}}\vr+
    (\kappa_\ell(\vr-1)+2) (\pl^{(\ell)}_b +\kappa_\ell\pl_{_{\! E}} )\right)\ln \BP_n+\bar C_{ \ell },\no\\
  H_{ \ell }^{(2)} &:=& \left(1-\vr +2b_\ell \pl_{_{\! E}}\)\(\pl^{(\ell)}_b +\kappa_\ell\pl_{_{\! E}}\) \ln\BP_n,\label{1.14}\\
  G_{\ell+1}&:=&\pl_{_{\! E}} G_\ell+\sum_{i=1}^p{(\pl_{_{\! E}})^\ell F_i} \(\pl_{_{\! E}}\frac{\bar H_{ i }^{(1)}}{F_{i}}-
              \pl_{b}^{(i)}\frac{H_{ i}^{(2)}}{F_{i}}\right), ~~~~G_0=0, \no\\
  \bar C_\ell&:=&-2n_\ell \left((1-\kappa_\ell)b_\ell+\sum_{j\neq\ell}\frac{n_j}{b_\ell-b_j}\right).\no
%\label{1.14}
\eea
\end{corollary}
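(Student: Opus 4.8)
The plan is to deduce Corollary~\ref{Cor:1.2} from Theorem~\ref{Theo:1.1} by specializing to $m=1$ and then carrying out a single operator identity peculiar to the one-time case. First I would specialize the Jacobi matrix (\ref{Jacobi}): for $m=1$ there are no couplings, it degenerates to the scalar $(-1)$, so $\J 11=\J 1m=\J mm=-1$. Substituting this into (\ref{diff-op}) gives $\pl_0=\J 11\,\pl_E=-\pl_E$ and, for $1\leqs\ell\leqs p$, $\pl_\ell=\pl_b^{(\ell)}+\kappa_\ell\pl_E$; the coupling terms $c\,\pl_c$ disappear and $\delta_{1,m}=1$, so $\vr_0=\vr_m=\vr_E-\vr_b=\vr$. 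With these substitutions the specialized $F_\ell$ and $H_\ell^{(2)}$ of (\ref{F,G}) coincide, after commuting the partials, with the $F_\ell$ and $H_\ell^{(2)}$ of (\ref{1.14}); this is a direct check.

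Next I would record the change of the distinguished operator from ${}'=\pl_0=-\pl_E$ to ${}'=\pl_E$. The first $p$ columns of (\ref{PDE}) are built from $({}')^{j}F_\ell$, so replacing $-\pl_E$ by $\pl_E$ multiplies the $j$-th row ($j=0,\dots,p$) of those columns by $(-1)^{j}$; by multilinearity the determinant is rescaled by the nonzero factor $(-1)^{p(p+1)/2}$, and the vanishing in (\ref{PDE}) is preserved once the last column is scaled to match. Tracking the recursion $G_{\ell+1}=\pl_0 G_\ell+\cdots$ through this rescaling (with $G_0=0$) pins down the redefinition $G_\ell\mapsto -(-1)^{\ell}G_\ell$ announced before the statement, and reduces the whole claim to showing that the bracketed summand of the $G$-recursion transforms into the one of (\ref{1.14}).

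That summand is the crux. Writing $\pl_\ell=\pl_b^{(\ell)}+\kappa_\ell\pl_E$ and collecting the two $\pl_E$-pieces,
\[
\pl_0\frac{H_\ell^{(1)}}{F_\ell}-\pl_\ell\frac{H_\ell^{(2)}}{F_\ell}
 =-\pl_E\frac{H_\ell^{(1)}+\kappa_\ell H_\ell^{(2)}}{F_\ell}-\pl_b^{(\ell)}\frac{H_\ell^{(2)}}{F_\ell},
\]
so it suffices to match this against $\pl_E(\bar H_\ell^{(1)}/F_\ell)-\pl_b^{(\ell)}(H_\ell^{(2)}/F_\ell)$, i.e.\ to prove $\pl_E(\bar H_\ell^{(1)}/F_\ell)=-\pl_E\big((H_\ell^{(1)}+\kappa_\ell H_\ell^{(2)})/F_\ell\big)$.

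The operator computation behind this equality is the step I expect to be the \textbf{main obstacle}, because $\vr$, $\pl_E$ and $b_\ell\pl_E$ do not commute; in particular one must use $\pl_E\vr=\vr\pl_E+\pl_E$ to reconcile the $\vr\pl_E$-terms produced by $\kappa_\ell H_\ell^{(2)}$ with those in $\bar H_\ell^{(1)}$. Pleasantly, the obstruction should resolve through the identity
\[
\bar H_\ell^{(1)}+H_\ell^{(1)}+\kappa_\ell H_\ell^{(2)}=2\kappa_\ell b_\ell F_\ell ,
\]
which is where the constant shift hides: the naive $-C_\ell$ differs from $\bar C_\ell$ by exactly $2n_\ell\kappa_\ell b_\ell$, and together with the surplus second-order piece $2\kappa_\ell b_\ell\pl_E\pl_\ell\ln\BP_n$ coming from $\kappa_\ell H_\ell^{(2)}$ it assembles, via $\pl_E\pl_\ell\ln\BP_n=F_\ell-n_\ell$, into $2\kappa_\ell b_\ell F_\ell$. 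Since the coefficient $2\kappa_\ell b_\ell$ depends only on the target points, it lies in the kernel of $\pl_E$; dividing by $F_\ell$ and applying $\pl_E$ annihilates it, which is precisely the matching required above. This single identity, together with the two elementary specializations, yields Corollary~\ref{Cor:1.2}.
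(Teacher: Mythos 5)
Your proposal is correct and takes essentially the same route as the paper: the crux identity $\bar H_\ell^{(1)}+H_\ell^{(1)}+\kappa_\ell H_\ell^{(2)}=2\kappa_\ell b_\ell F_\ell$, verified via $[\vr_E,\pl_E]=-\pl_E$ and killed by $\pl_E$ after division by $F_\ell$, is exactly how the paper defines $\bar H_\ell^{(1)}$ and rewrites the bracket in the $G$-recursion. The only cosmetic difference is that you convert ${}'=\pl_0=-\pl_E$ into ${}'=\pl_E$ by row/column rescaling of the determinant and induction on the $G$'s, whereas the paper re-runs Proposition \ref{prop:7.1} with ${}'=\pl_E$ and $X_\ell\mapsto -X_\ell$; both amount to the same sign bookkeeping.
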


In section \ref{sect7}, we shall work out two examples, immediate applications of the equations in Theorem \ref{Theo:1.1} and Corollary \ref{Cor:1.2}. In the first example, we  describe nonintersecting Brownian motions, leaving from $0$ and forced back to $0$. The second example deals with the situation of several target points with the extreme ones being symmetric with regard to the origin. That model will also be used later in Section \ref{section:example}.

\noindent{\bf Pearcey process with inliers}: In section \ref{section:example}, we consider non-intersecting Brownian motions leaving from $0$ and  forced to $p$ target points at time $t=1$, with the only condition that the left-most and right-most target points are symmetric with respect to the origin, with $p-2$ intermediate target points thrown  in totally arbitrarily; it is convenient to rename the target points $\beta_1<\ldots<\beta_p$, as follows:
\be \begin{array}{ccccccccccc}
\tilde a&<& -\tilde c_1&< \ldots <&-\tilde c_{p-2}&<&-\tilde a
\\ \\
~n_{+ }& &  ~n_1           &  \ldots      & n_{p-2}  &  &~n_- 
\end{array}\label{targets}\ee
with the corresponding number of particles forced to those points at time $t=1$. The purpose of this section is to identify the critical process obtained by letting $n:=n_+=n_-\rg \iy$ and by rescaling $\tilde a$ and the $\tilde c_i$ accordingly, while keeping $n_1,\ldots,n_{p-2}$ fixed. We let $\tilde a$ go to $-\iy$ like $-\sqrt{n}$ and $-\tilde a$ to $\iy$ like $\sqrt{n}$. The target points $-\tilde c_1,\ldots,-\tilde c_{p-2}$ of the inliers move to $\iy$ as well, but at a much slower rate, namely like $-u_\ell \left(\frac{n}{2}\right)^{1/4}$. A new process will appear at the point of bifurcation, where the bulk of the particles forced to $-\sqrt{n}$ depart
from those going to $\sqrt{n}$, namely the {\em Pearcey process with inliers}, which generalizes the Pearcey process found by C. Tracy and H. Widom \cite{TW-Pear}. It describes the statistical fluctuations near that point of bifurcation; it will be sensitive to the presence of inliers and will be different in the absence of inliers (Pearcey process). We will compute the kernel governing the transition probabilities and also apply the formulae obtained in Corollary \ref{Cor:1.2} to compute a PDE for the gap probability, which, to our surprise, appears to be an {\em exact $p\times p$ Wronskian}. This is the content of Theorem \ref{Th: main}.

%@@@@@@@@

%{\em Pearcey process with inliers}: In section \ref{section:example} we will apply the formulae obtained in Corollary \ref{Cor:1.2} to the situation where $b_1=-b_p$ and where the remaining points are in arbitrary position between $b_1$ and $b_p$. Then we will let the total number of particles go to $\iy$, with a specific scaling, applied to the formulae in section \ref{section:example}. The scaling will be the one used to obtain the Pearcey process. As a reminder, the Pearcey process describes the statistical fluctuations near the point of bifurcation of $2n$ non-intersecting Brownian motions starting from $0$ and forced -- in equal proportion -- to two points~$\pm \sqrt{n}$, when~$n\rg \iy$. The point of bifurcation is the point where the particles forced to $-\sqrt{n}$ depart from those going to $\sqrt{n}$. In the present work we assume that, in addition, a number of particles are forced to intermediate points; they will influence the statistical fluctuations of the particles near the point of bifurcation and will lead to a new process: a Pearcey process with inliers\footnote{Note that here we shall be interested in the gap probability, i.e., the probability that the process avoids a finite set $E$ at a given time; this is why we look at $E^c:=\BR\setminus E$.}; see Figure \ref{fig2}. This is contained in the following statement:

%When $N\rg \iy$, the linear diffusion equation turns into an infinite-dimensional diffusion equation. It turns out that instead the probability above satisfies  

\begin{theorem}\label{Th: main}%\begin{proposition}\label{Pearcey}
Pick times $\tau_1<\ldots<\tau_m$, subsets $E_j\subset \BR$ for $j=1,\ldots,m$ and parameters $u_\ell$ for
$\ell=1,\ldots,p-2$. Consider $2n+\sum_{\ell=1}^{p-2} n_\ell$ non-intersecting Brownian motions, such that

(i) all particles leave from $0$ at time $t=0$,

(ii) $n=n_\pm$ particles are forced to $\pm \sqrt{n}$ at time $t=1$,

(iii) $n_\ell $ paths are forced to points\footnote{Note that those points belong to the interval
  $[-\sqrt{n},\sqrt{n}]$ for large enough $n$.} $-u_\ell \left(\frac{n}{2}\right)^{1/4}$ at time $t=1$ ($1\leqs \ell\leqs p$).

% for $\ell=1,\ldots,p-2$,

%$n=n_\pm$ are forced to $\pm \sqrt{n}$ and $n_\ell $ paths are forced to points $-u_\ell \left(\frac{n}{2}\right)^{1/4}$ in between $[-\sqrt{n},\sqrt{n}]$
%for given parameters $u_\ell$: 
%
\medbreak
\noindent Then the following Brownian motion limit holds for the gap probability, about time $t=1/2$, keeping $n_\ell$ fixed,
\bea
  \lefteqn{\hspace*{-2cm}\lim_{n\rg \iy}\BP\(\bigcap_{j=1}^m\left\{\mbox{all}~x_i
    \Bigl(\frac{1}{2}+\frac{\tau_j}{4\sqrt{2n}}\Bigr)\in\frac{E_j^c}{4(n/2)^{1/4}} \right\}\)}\no\\
  &=&\BP^{\PR~(u_1,\ldots,u_{p-2})}\left( \bigcap_{j=1}^m\left\{{\cal P}(\tau_j)\cap E_j=\emptyset\right\}\right)\no\\
  &=&\det\left(\Id-\left(\raisebox{1mm}{$\chi$}{}_{_{E_i}}K^{\PR}_{\tau_i\tau_j}\raisebox{1mm}{$\chi$}{}_{_{E_j}}
\right)_{1\leqs i,j\leqs m}\right), \label{Pearcey-scaling}
\eea
where this probability is given by the Fredholm determinant of the Pearcey matrix kernel with inliers, which is a
rational perturbation of the customary Pearcey kernel\,\footnote{{\bf X} stands for the contour
 $$ \nwarrow ~  \swarrow $$

\vspace{-.9cm}

$$ 0%\bullet
$$

\vspace{-.9cm}

$$  \nearrow~ \searrow   $$}, namely
\bea \lefteqn{
 K^{\PR}_{s,t}(X,Y;~u_1,\ldots,u_{p-2})%\sqrt{dX_0 dY_0}
 }\no\\
 &=&
 -\frac{1%\sqrt{dX_0 dY_0}
       }{4\pi^2  }\int_{\textbf X} dV
\int^{i\iy}_{-i\iy}dU ~\frac{1}{U-V}%\left(\frac{iU-W}{V-W} \right)^k
\frac{e^{-\frac{U^4}{4}+\frac{tU^2}{2}- UY}}
 {e^{-\frac{V^4}{4}+\frac{sV^2}{2}-VX}}
\prod_{\ell=1}^{p-2}\left(\frac{U+u_\ell}{V+u_\ell}\right)^{n_\ell} \no\\ \no \\
&&%\eean
 -\left\{ \begin{array}{l} 0~~~~~~~~~~~~~~~~~~~~
 ~\mbox{~~for~} t-s\leqs 0\\
\frac{1}{\sqrt{2\pi (t-s)}} e^{-{\frac { \left( { X}-{
Y} \right) ^{2}}{2 ({ t}-
 {
s})}}}%\sqrt{dX_0 dY_0}~
~~\mbox{~~for~} t-s> 0.
  \end{array}\right.
  \label{Pearcey-kernel}\eea
The log of the gap probability $(\BE=E_1\times\dots\times E_m)$
$$
 \BQ(\tau_1,\ldots,\tau_m; u_1,\ldots,u_{p-2};\BE)
 :=\ln \BP^{\PR~(u_1,\ldots,u_{p-2})}\left( \bigcap_{j=1}^m\left\{{\cal P}(\tau_j)\cap
E_j=\emptyset\right\}\right)
$$
satisfies a partial differential equation, which is a $p\times p$ Wronskian with respect to the operator
$\pl_{_{\BE}}=\sum_{i=1}^m \pl_{_{E_i}}$:
\be\label{Wronskian_2}
 {\cal W}_p\left[
     \pl_{_{\!\BE}}^2 \pl_\tau \BQ,~ \pl_{_{\!\BE}}^2 \frac{\pl\BQ}{\pl u_1},\ldots,
     \pl_{_{\!\BE}}^2 \frac{\pl \BQ}{\pl u_{p-2}},
      ~\BX
 \right]_{\pl_{_{\BE}}}=0,
 \ee
where\footnote{Remember for $u=(u_1,\ldots,u_{p-2})$ and $\tau=(\tau_1,\ldots,\tau_m)$, one has
  $\pl_u=\sum_1^{p-2}\frac{\pl}{\pl u_i}$, $\vr_u=\sum_1^{p-2}u_i\frac{\pl}{\pl u_i}$, $\pl_\tau=\sum_1^{m}\frac{\pl}{\pl \tau_i}$.% and $\vr=\vr_{_{\!\BE}}-\vr_u.$
   One also needs $\vr_\BE :=\sum_{1}^m\vr_{E_i}$ and the mixed time-space derivative $\tilde
  \pl_{_{\!\BE}}:=\sum_{1}^m\tau_i\pl_{_{\!E_i}}$.}  %(Note $\vr =\vr_{_\BE}-\vr_u$)
\be\BX:=
(\vr_{_{\!\BE}}-\vr_u+2\vr_{\tau}-2)\pl_{_{\!\BE}}^2\BQ+4\pl_u\pl_{_{\!\BE}} \pl_\tau\BQ+8\pl^3_{\tau}\BQ%\BQ\frac{\pl^3\BQ}{\pl\tau^3}
-4\tilde \pl_{_{\!\BE}}\pl_{_{\!\BE}}\pl_\tau\BQ+4\left\{\pl_{_{\!\BE}}\pl_\tau\BQ %\frac{\pl\BQ}{\pl \tau}
,\pl_{_{\!\BE}}^2\BQ\right\} _{\pl_{_{\BE}}}.
\label{X}\ee
For one-time ($m=1$), the expression $\BX$ reads as follows:
\be\BX:=
 (\vr_{_{\!E}}-\vr_u-2\tau\pp{}{\tau}-2)\pl_{_{\!E}}^2\BQ+4\pl_u\pl_{_{\!E}} \frac{\pl\BQ}{\pl
 \tau}+8\frac{\pl^3\BQ}{\pl
 \tau^3} +4\left\{\pl_{_{\!E}} \frac{\pl\BQ}{\pl \tau}
 ,\pl_{_{\!E}}^2\BQ\right\}
 _{\pl_{_{\!E}}}.
\label{X1}\ee
\end{theorem}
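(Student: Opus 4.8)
The plan is to establish the theorem in two stages, both governed by the single Pearcey scaling of the statement: first the convergence in (\ref{Pearcey-scaling}) of the finite-$n$ gap probability to the Fredholm determinant of the inlier-Pearcey kernel (\ref{Pearcey-kernel}), and then the Wronskian PDE (\ref{Wronskian_2}) for $\BQ$. The two stages use different machinery --- steepest descent for the kernel, and an asymptotic reduction of the finite equation of Corollary \ref{Cor:1.2} (for $m=1$) and Theorem \ref{Theo:1.1} (for general $m$, where the times $\tau_i$ enter through the couplings) for the PDE --- but they share the same change of variables (\ref{change of variables}) and the same bookkeeping of $n$-powers.

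\emph{The kernel.} I would start from the finite-$N$ Brownian kernel $H^{(N)}_{t_k t_\ell}$ of (\ref{BM kernel}), insert the scaling $t=\frac12+\frac{\tau}{4\sqrt{2n}}$, $x=\frac{X}{4(n/2)^{1/4}}$, with target points $\pm\sqrt n$ (multiplicity $n$) and $-u_\ell(n/2)^{1/4}$ (multiplicity $n_\ell$), and carry out a saddle-point analysis of the double contour integral. Writing the factors carrying the large powers schematically as $e^{n\Phi(V)-n\Phi(U)}$, the symmetric forcing to $\pm\sqrt n$ produces at $t=\frac12$ a degenerate critical point of $\Phi$ at which $\Phi',\Phi'',\Phi'''$ all vanish; its quartic expansion yields exactly the Pearcey phase $-\frac{U^4}{4}+\frac{\tau U^2}{2}-UY$. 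The inliers carry only the fixed powers $n_\ell$ and sit at the natural Pearcey length scale $(n/2)^{1/4}$, so that after rescaling $\bigl(\frac{U-\beta_\ell}{V-\beta_\ell}\bigr)^{n_\ell}\to\bigl(\frac{U+u_\ell}{V+u_\ell}\bigr)^{n_\ell}$, which is precisely the rational perturbation in (\ref{Pearcey-kernel}). Passing from kernel to Fredholm determinant (\ref{Pearcey-scaling}) then requires the usual trace-norm and dominated-convergence estimates on the rescaled contours $\textbf X$ and $i\BR$.

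\emph{The PDE.} For the equation I would substitute the same scaling into the finite determinant of Corollary \ref{Cor:1.2}. Under (\ref{change of variables}) every operator acquires an explicit $n$-power: the boundary operators $\pl_E,\vr_E$, the target operators $\pl_b^{(\ell)},\vr_b$, and the constants $\bar C_\ell$ all rescale, and the time variation enters because $b_\ell=\sqrt{2t/(1-t)}\,\beta_\ell$ depends on $t=\frac12+\frac{\tau}{4\sqrt{2n}}$. A direct computation of $\partial b_\ell/\partial\tau$ at $t=\frac12$ shows that $\pl_\tau$ is, to leading order, an $O(1)$ combination of the two \emph{extreme} target derivatives $\pl_{b_1},\pl_{b_p}$ (the window contribution vanishing at $t=\frac12$), whereas each \emph{inlier} derivative is $O(n^{-1/4})\pl_{u_\ell}$ and the matrix-model boundary operator $\pl_E$ rescales to the Pearcey window operator $\pl_\BE$. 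Writing $\ln\BP_n=(\text{divergent deterministic part})+\BQ+o(1)$ and feeding this into $F_\ell,\bar H^{(1)}_\ell,H^{(2)}_\ell,G_\ell$, I would expand in the small parameter and collect the leading order. The $p-2$ inlier columns become the columns $\pl_\BE^2\pl_{u_\ell}\BQ$ of (\ref{Wronskian_2}), while the two extreme columns $F_1,F_p$ --- which by the symmetry $n_+=n_-$, $\beta_1=-\beta_p$ degenerate at leading order --- together with the final $G$-column reorganize, at the next order, into the remaining two columns $\pl_\BE^2\pl_\tau\BQ$ and $\BX$, with $\BX$ as in (\ref{X})--(\ref{X1}).

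\emph{Main obstacle.} The crux is exactly this last reorganization: explaining why the \emph{near}-Wronskian of Corollary \ref{Cor:1.2}, of size $p+1$, collapses to an \emph{exact} $p\times p$ Wronskian. Because the columns $(F_1,F_1',\dots)$ and $(F_p,F_p',\dots)$ coincide at leading order under the reflection symmetry, the top-order determinant vanishes identically, and the genuine content appears only at subleading order, where one must show (i) that all divergent remainders from the $F_\ell$'s and from $\ln\BP_n$ cancel, and (ii) that the recursively-defined $G$-column telescopes precisely into the single entry $\BX$, which is then $\pl_\BE$-compatible with the differentiation generating the other columns. Keeping track of the exact $n$-exponents of every operator and of the constants $\bar C_\ell$, and verifying these cancellations, is where the real labor lies; the appearance of the Wronskian bracket $\{\cdot,\cdot\}_{\pl_\BE}$ and of the third time-derivative $8\pl_\tau^3\BQ$ inside $\BX$ is the signature that the reduction faithfully reflects the quartic (Pearcey) structure uncovered in the kernel step.
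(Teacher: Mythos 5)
Your proposal is correct and follows essentially the same route as the paper's own proof: a steepest-descent analysis of the kernel (\ref{BM kernel}) around the quartic degenerate saddle with the inlier factors rescaled at the $(n/2)^{1/4}$ scale, followed by substituting the scaling $z=n^{-1/4}$ into the finite-$n$ near-Wronskian of Corollary \ref{Cor:1.2}, where the reflection symmetry makes the two extreme columns coincide to leading order and the corrected $G$-column telescopes into derivatives of $\BX$, collapsing the $(p+1)\times(p+1)$ determinant to the $p\times p$ Wronskian --- precisely the mechanism carried out in the paper's Steps 1--3 and Lemma \ref{Lemma 7.4} (where, as in your outline, the real labor of tracking $z$-powers and cancellations resides, and where the paper likewise restricts the PDE reduction to $m=1$).
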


{\bf Remark}: The term $\vr_u \pl_{_{\!E}}^2\BQ$ could be omitted in the definition of $\BX$, since it is a linear combination of $(p-1)$ columns in the matrix (\ref{Wronskian_2}) (from the second $\times u_1$ to the $(p-1)$st column $\times u_{p-2}$). We nevertheless keep this term in the expression, in view of Conjecture \ref{conjecture}.

\

In the absence of inliers, one obtains, in particular, the PDE for the transition probability of the Pearcey
process: it is a $2\times 2$ Wronskian with $\BX$ as in (\ref{X}) and (\ref{X1}), but without the $u$-partials. In
\cite{AOvM}, it is shown that the transition probability of the Pearcey process satisfies the simpler equation
$\BX=0$.

\begin{figure}
\vspace*{-3cm}\hspace*{-1.0cm}
\includegraphics[width=140mm,height=110mm]{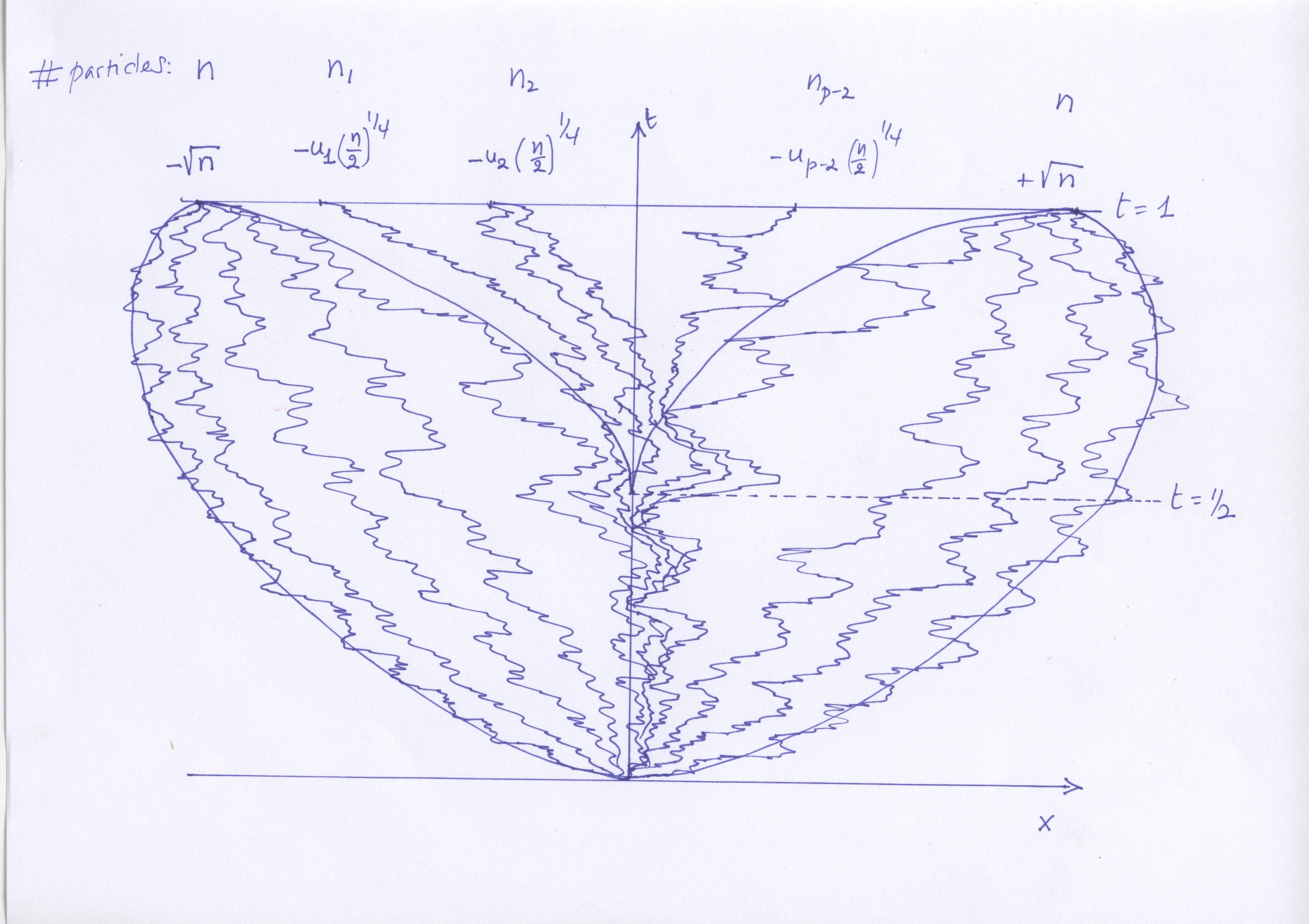}
\vspace*{-2cm}
\caption{Pearcey process with inliers}\label{fig2}
\end{figure}

\begin{corollary}\label{cor:no inl}\cite{AOvM}
In the absence of inliers ($p=2$), 
$$
 \BQ(\tau_1,\ldots,\tau_m;\BE) :=\ln \BP^{\PR }\left(\bigcap_{j=1}^m\{{\cal P}(\tau_j)\cap E_j=\emptyset\}\right)
$$
satisfies
$$
 (\vr_{_{\BE}}+2\vr_{\tau}-2)\pl_{_{\!\BE}}^2\BQ+8\pl^3_{
 \tau}\BQ -4\tilde \pl_{_\BE}\pl_{_\BE}\pl_\tau\BQ+4\left\{\pl_{_{\!\BE}} \pl_{_{\!\tau}}\BQ
 ,\pl_{_{\!\BE}}^2\BQ\right\}
 _{\pl_{_{\BE}}}=0,
$$
and for the one-time case ($m=1$),
$$
 (\vr_{_{\!E}}-2\tau\pp{}{\tau}-2)\pl_{_{\!E}}^2\BQ+8\frac{\pl^3\BQ}{\pl
 \tau^3} +4\left\{\pl_{_{\!E}} \frac{\pl\BQ}{\pl \tau}
 ,\pl_{_{\!E}}^2\BQ\right\}
 _{\pl_{_{\!E}}}=0.
$$
\end{corollary}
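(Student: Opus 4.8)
The plan is to obtain Corollary \ref{cor:no inl} as the $p=2$ specialization of Theorem \ref{Th: main}, and then to sharpen the resulting Wronskian identity to the pointwise equation $\BX=0$ by appealing to the independent derivation in \cite{AOvM}.

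First I would set $p=2$ throughout. Since there are then $p-2=0$ inlier parameters, every $u$-derivative is empty: $\pl_u=0$ and $\vr_u=0$. Inspecting the definition of $\BX$ in (\ref{X}), the two terms $-\vr_u\pl_{_{\!\BE}}^2\BQ$ and $4\pl_u\pl_{_{\!\BE}}\pl_\tau\BQ$ drop out, so that $\BX$ collapses to exactly the left-hand side of the first displayed equation of the corollary; likewise (\ref{X1}) collapses, at $m=1$, to the left-hand side of the second. Thus the content of the corollary is precisely the assertion $\BX=0$.

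Next I would examine what Theorem \ref{Th: main} yields directly at $p=2$. The Wronskian (\ref{Wronskian_2}) has no $u$-columns left, so it reduces to the $2\times2$ determinant ${\cal W}_2[\pl_{_{\!\BE}}^2\pl_\tau\BQ,\,\BX]_{\pl_{_{\BE}}}=0$. Expanding this Wronskian and dividing by $(\pl_{_{\!\BE}}^2\pl_\tau\BQ)^2$ shows that $\pl_{_{\BE}}\bigl(\BX/\pl_{_{\!\BE}}^2\pl_\tau\BQ\bigr)=0$, i.e. $\BX=\lambda\,\pl_{_{\!\BE}}^2\pl_\tau\BQ$ for some multiplier $\lambda$ annihilated by $\pl_{_{\BE}}$. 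Hence Theorem \ref{Th: main} by itself proves the corollary only up to this $\pl_{_{\BE}}$-constant multiple of $\pl_{_{\!\BE}}^2\pl_\tau\BQ$.

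The main obstacle is to upgrade this to the sharp statement $\BX=0$; this is exactly the extra content recorded in \cite{AOvM}. I would invoke that result directly: there the gap probability of the Pearcey process is handled through the integrable (Virasoro and KP) structure of the associated Hermitian matrix model with external potential, which pins down the undetermined multiplier and yields $\BX=0$ outright rather than merely the $2\times2$ Wronskian. Alternatively, one can try to close the gap internally by a boundary argument: letting the windows $E_j$ shrink to the empty set, so that $\BQ\to0$ together with its derivatives, both $\BX$ and $\pl_{_{\!\BE}}^2\pl_\tau\BQ$ vanish to leading order, which should force $\lambda\equiv0$ and hence $\BX\equiv0$. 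Either route delivers the two displayed PDEs of the corollary, the second being the one-time ($m=1$) reduction of the first.
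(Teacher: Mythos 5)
Your reduction is on track, and it matches the paper's own set-up: at $p=2$ the $u$-terms disappear, the corollary is exactly the assertion $\BX=0$, and Theorem \ref{Th: main} by itself only yields the $2\times2$ Wronskian $\left\{\pl_{_{\!\BE}}^2\pl_\tau\BQ,\,\BX\right\}_{\pl_{_{\BE}}}=0$, i.e. $\BX=\lambda\,\pl_{_{\!\BE}}^2\pl_\tau\BQ$ with $\pl_{_{\BE}}\lambda=0$. Where the proposal breaks down is in how you upgrade this to $\BX=0$. The paper's proof (sketchy, but with a concrete mechanism) introduces an ingredient you are missing: one repeats the \emph{same} scaling limit for an \emph{asymmetric} configuration, with $2nq$ particles forced to $-\sqrt{n}$ and $2n(1-q)$ to $+\sqrt{n}$, $q\neq1/2$. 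This produces the same limiting function $\BQ$ but a \emph{second, independent} Wronskian PDE, $\left\{\pl_{_{\!\BE}}^3\BQ,\,\BX\right\}_{\pl_{_{\BE}}}=0$. Since $\BX$ is then proportional, with $\pl_{_{\BE}}$-constant factors, to both $\pl_{_{\!\BE}}^2\pl_\tau\BQ$ and $\pl_{_{\!\BE}}^3\BQ$, a function-theoretic argument (carried out in \cite{AOvM}) forces $\BX=0$. Your route (a) is essentially a bare citation of the result being proved (the corollary is attributed to \cite{AOvM}), and your guessed description of its mechanism --- ``the Virasoro/KP structure pins down the multiplier'' --- is not what happens there; the pinning comes from the two-Wronskian trick just described.

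Your self-contained alternative (b) --- shrinking the windows $E_j$ to $\emptyset$ --- fails for two concrete reasons. First, $\lambda$ is annihilated only by $\pl_{_{\BE}}$, i.e. it is constant under \emph{simultaneous translation} of all boundary points; it may still depend on $\tau$ and on the differences of boundary points, in particular on the lengths of the intervals. Shrinking the $E_j$ changes exactly those $\pl_{_{\BE}}$-invariant quantities, so whatever you learn about $\lambda$ in that limit says nothing about $\lambda$ at a fixed finite configuration. Second, even in the limit you learn nothing: both $\BX$ and $\pl_{_{\!\BE}}^2\pl_\tau\BQ$ tend to $0$ there, so the relation $\BX=\lambda\,\pl_{_{\!\BE}}^2\pl_\tau\BQ$ degenerates to $0=\lambda\cdot 0$, which is compatible with any $\lambda$; ``both sides vanish'' can never force $\lambda\equiv0$. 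The sharp equation $\BX=0$ genuinely requires a second independent PDE (the asymmetric deformation), not a boundary condition.
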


We now formulate a conjecture, stating that, even with inliers, the equation for the transition probability reads $\BX=0$, where $\BX$ is given by (\ref{X}) and (\ref{X1}):

\begin{conjecture} \label{conjecture} 
Even with inliers ($p>2$), we conjecture that the function
$$
 \BQ(\tau_1,\ldots,\tau_m; u_1,\ldots,u_{p-2};\BE)
 :=\ln \BP^{\PR~(u_1,\ldots,u_{p-2})}\left( \bigcap_{j=1}^m\left\{{\cal P}(\tau_j)\cap
E_j=\emptyset\right\}\right)
$$
satisfies
\be\BX =
(\vr_{_{\!\BE}}-\vr_u+2\vr_{\tau}-2)\pl_{_{\!\BE}}^2\BQ+4\pl_u\pl_{_{\!\BE}} \pl_\tau\BQ+8\pl^3_{\tau}\BQ%\BQ\frac{\pl^3\BQ}{\pl\tau^3}
-4\tilde \pl_{_{\!\BE}}\pl_{_{\!\BE}}\pl_\tau\BQ+4\left\{\pl_{_{\!\BE}}\pl_\tau\BQ %\frac{\pl\BQ}{\pl \tau}
,\pl_{_{\!\BE}}^2\BQ\right\} _{\pl_{_{\BE}}}=0,
\label{Xu}\ee
and for the one-time case ($m=1$),
\be\BX =
 (\vr_{_{\!E}}-\vr_u-2\tau\pp{}{\tau}-2)\pl_{_{\!E}}^2\BQ+4\pl_u\pl_{_{\!E}} \frac{\pl\BQ}{\pl
 \tau}+8\frac{\pl^3\BQ}{\pl
 \tau^3} +4\left\{\pl_{_{\!E}} \frac{\pl\BQ}{\pl \tau}
 ,\pl_{_{\!E}}^2\BQ\right\}
 _{\pl_{_{\!E}}}=0.
\label{Xu1}\ee

\end{conjecture}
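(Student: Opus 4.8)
The plan is to establish the pointwise identity $\BX=0$ directly from the integrable structure of the Pearcey-with-inliers kernel (\ref{Pearcey-kernel}), rather than trying to deduce it from the Wronskian PDE (\ref{Wronskian_2}) of Theorem \ref{Th: main}. The reason is structural: since $\BX\equiv 0$ makes the last column of the matrix in (\ref{Wronskian_2}) vanish, the Conjecture implies the Theorem, and hence the Theorem cannot in turn imply the Conjecture. All the Theorem gives is that $\BX$ is a $\pl_{_{\BE}}$-linear combination of the columns $\pl_{_{\!\BE}}^2\pl_\tau\BQ,\pl_{_{\!\BE}}^2\frac{\pl\BQ}{\pl u_1},\ldots,\pl_{_{\!\BE}}^2\frac{\pl\BQ}{\pl u_{p-2}}$ with coefficients killed by $\pl_{_{\BE}}$; the content of the Conjecture is precisely that all these coefficients are zero. (The Remark already shows that the term $\vr_u\pl_{_{\!\BE}}^2\BQ$ sits in this ambiguity, which is exactly why it is invisible to the Wronskian but must be retained for $\BX=0$.)

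First I would follow the template of \cite{AOvM}, which establishes $\BX=0$ for the inlier-free case $p=2$ (Corollary \ref{cor:no inl}). Writing $\BQ=\ln\det(\Id-K^{\PR})$, the kernel (\ref{Pearcey-kernel}) is assembled from the perturbed Pearcey integrand
$$\Phi(X)=\int_{\textbf X}e^{-\frac{V^4}{4}+\frac{sV^2}{2}-VX}\prod_{\ell=1}^{p-2}(V+u_\ell)^{-n_\ell}\,dV$$
together with its dual on the vertical contour. The two algebraic inputs are the linear ODE in $X$ satisfied by $\Phi$, now carrying rational-in-$V$ coefficients because of the poles at $V=-u_\ell$, and the elementary relation $\frac{\pl\Phi}{\pl u_\ell}=-n_\ell\int_{\textbf X}(V+u_\ell)^{-1}e^{-\frac{V^4}{4}+\frac{sV^2}{2}-VX}\prod(V+u_{\ell'})^{-n_{\ell'}}\,dV$, which identifies each $u_\ell$-derivative with the insertion of a simple pole. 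I would then derive the Virasoro-type deformation relations for $\det(\Id-K^{\PR})$ by imposing invariance of the double contour integral under the vector fields $V\mapsto V+\epsilon V^k$ (and their $U$-counterparts). At a qualitative level the quartic exponent $-\frac{V^4}{4}$ is responsible for the third-order time term $8\pl_\tau^3\BQ$, the Pearcey scaling for the Euler combination $(\vr_{_{\!\BE}}-\vr_u+2\vr_\tau-2)\pl_{_{\!\BE}}^2\BQ$, and the pole insertions for the genuinely new term $4\pl_u\pl_{_{\!\BE}}\pl_\tau\BQ$ that is absent when $p=2$.

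The main obstacle is to control exactly these pole contributions. In the inlier-free case the integrand is entire in $V$, so the integrations by parts underlying the Virasoro constraints produce no residues and the relations close cleanly into $\BX=0$ (with no $u$-terms). With inliers, every such manipulation picks up residues at $V=-u_\ell$, and the crux is to show that, after summing over $\ell$ and combining all constraints, these residues reorganize into precisely the terms $4\pl_u\pl_{_{\!\BE}}\pl_\tau\BQ$ and $-\vr_u\pl_{_{\!\BE}}^2\BQ$ of (\ref{X}), leaving no anomalous remainder. This exact bookkeeping of the rational perturbation is delicate, and I expect it to be the reason the authors could only prove the weaker Wronskian, which tolerates an undetermined $\pl_{_{\BE}}$-linear combination of the inlier columns.

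As a fallback route from the Theorem to the Conjecture, I would start from the relation $\BX=a_0\,\pl_{_{\!\BE}}^2\pl_\tau\BQ+\sum_{j=1}^{p-2}a_j\,\pl_{_{\!\BE}}^2\frac{\pl\BQ}{\pl u_j}$ supplied by the vanishing Wronskian, where the $a_j$ are annihilated by $\pl_{_{\BE}}$, and attempt to show $a_j\equiv 0$ by evaluating both sides in a degenerate regime --- letting the windows $E_i$ recede to infinity, or separating the $u_\ell$ so the inliers decouple and the $p=2$ result of \cite{AOvM} applies. The difficulty is that all the quantities involved are third-order and nonlinear in $\BQ$, so producing asymptotics sharp enough to pin down each coefficient $a_j$ is itself a serious analytic problem; for this reason I expect the direct integrable derivation above to be the more promising of the two approaches.
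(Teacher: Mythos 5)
The statement you set out to prove is Conjecture \ref{conjecture}, which the paper itself leaves open: it offers no proof, only the weaker Wronskian identity (\ref{Wronskian_2}) of Theorem \ref{Th: main} and the remark that the inlier-free analogue (Corollary \ref{cor:no inl}) is proved in \cite{AOvM}. So the question is whether your attempt closes that gap, and it does not. Your structural diagnosis is correct and matches the paper's own discussion: since $\BX\equiv 0$ makes the last column of (\ref{Wronskian_2}) vanish identically, the Conjecture implies the Theorem and not conversely; the vanishing Wronskian only yields $\BX=a_0\,\pl_{_{\!\BE}}^2\pl_\tau\BQ+\sum_{j=1}^{p-2}a_j\,\pl_{_{\!\BE}}^2\frac{\pl\BQ}{\pl u_j}$ with $\pl_{_{\!\BE}}a_j=0$; and the paper's Remark on $\vr_u\pl_{_{\!\BE}}^2\BQ$ is precisely an instance of that ambiguity. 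But beyond this diagnosis, both of your routes terminate at an acknowledged obstacle rather than at a completed argument, so what you have written is a research program, not a proof.

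Concretely: in your direct route, the assertion that the residues at $V=-u_\ell$ produced by the Virasoro-type integrations by parts ``reorganize into precisely $4\pl_u\pl_{_{\!\BE}}\pl_\tau\BQ$ and $-\vr_u\pl_{_{\!\BE}}^2\BQ$, leaving no anomalous remainder'' \emph{is} the entire content of the conjecture, and you offer no computation toward it; moreover, the Virasoro constraints in this paper act on the finite-$n$ tau function, and transferring such identities to the Fredholm determinant of the limiting kernel is exactly the hard step --- the paper does it by scaling the finite-$n$ PDE, and it is precisely that passage which degrades the information down to a Wronskian. In your fallback route, pinning down the constants $a_j$ from asymptotics (windows receding to infinity, or inliers decoupling) is again left as an admitted ``serious analytic problem,'' with no estimate attempted. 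Finally, note that the paper's sketchy proof of Corollary \ref{cor:no inl} suggests a third strategy that your proposal does not consider and that seems closest to being viable: for $p=2$, \cite{AOvM} obtains $\BX=0$ by deriving a \emph{second, independent} Wronskian PDE for the same function from an asymmetric scaling limit ($q\neq 1/2$) and then applying a function-theoretic argument; the natural attack on the conjecture would be to manufacture enough independent Wronskian identities (e.g.\ asymmetric scalings in the presence of inliers) to force all the coefficients $a_j$ to vanish.
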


\
%\newpage

The PDE's play a {\em prominent role} in obtaining certain approximations which would be very hard to obtain without that technology. An example will be given here, without proof, for the Pearcey process without inliers. At the point of bifurcation, mentioned above, there appears a cusp in the Pearcey scale $\xi= \pm\frac{2}{27}(3\tau)^{3/2}$, such that, roughly speaking, most Pearcey process paths stay completely to the left or to the right of this cusp. Upon comparing the Pearcey process with, say, the right branch of the cusp in the new (crude) space-scale $(3\tau)^{1/6}$, and letting two different times $\tau_1$ and $\tau_2$ tend to $\iy$ in a very specific way, one is led to the so-called Airy process ${\cal A}(t)$. The exact approximation is given in the Theorem below taken from \cite{ACvM}:

 \begin{theorem}\label{Th: Airy by Pearcey}%\cite{ACvM} 
  Let $\tau_1,\tau_2 \to \iy$, such that 
  $$
  \frac{\tau_2-\tau_1}{2(t_2-t_1)}=(3\tau_1)^{1/3}
  +\frac{t_2-t_1}{(3\tau_1)^{1/3}}+\frac{2t_1t_2}{3\tau_1}+O\bigl(\frac{1}{\tau_1^{5/3}}\bigr)
  ;$$
 this specifies two new times $t_1$ and $t_2$. The following approximation, far out along the cusp, of the Pearcey process by the Airy process holds:
 \bean\lefteqn{ \BP\left(\bigcap_{i=1}^2\left\{\frac{\PR(\tau_i)-\frac{2}{27}(3\tau_i)^{3/2}}{(3\tau_i)^{1/6}}\cap\left(-E _i
\right)=\emptyset   \right\} \right)}\\
&=&
\BP\left( \bigcap_{i=1}^2\left\{{\cal A}(t_i)\cap (-  E_i)=\emptyset 
\right\}\right) \left(1%-\frac{a_0}{(3T_1)^{2/3}}
 +O\bigl(\frac{1}{\tau_1^{4/3}}\bigr)\right)
  .\eean
%  with 
%  $$
%  a_0= \frac{ 1}{8}\Tr \left( \bigl(I-{\mathbbm K}^{\cal A}\bigr)^{-1}
%   (x-y)(2(x+y)+3 (t_2-t_1)^2) {\mathbbm K}^{\cal A}(x,y)  \right).
%   $$
\end{theorem}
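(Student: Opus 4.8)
The plan is to deduce the stated approximation from convergence of the (extended) Pearcey kernel, under the indicated recentering and rescaling, to the (extended) Airy kernel, with the error controlled to order $O(\tau_1^{-4/3})$; convergence of the gap probabilities then follows from the standard trace-class continuity of Fredholm determinants. I would start from the no-inlier Pearcey kernel, i.e.\ (\ref{Pearcey-kernel}) with the product over $\ell$ deleted ($p=2$), and substitute the cusp scaling, writing the spatial arguments in the form $X=\frac{2}{27}(3\tau)^{3/2}+(3\tau)^{1/6}x$ (and similarly for $Y$), with the two Pearcey times $\tau_1,\tau_2$ taken large. After this substitution the $V$-integral has the form $\int \exp\big(f(V)\big)\,dV$ with $f(V)=-\tfrac14 V^4+\tfrac12\tau V^2-VX$, and the saddles of $f$ solve $V^3-\tau V+X=0$.

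The geometry of these saddles is the heart of the matter. The centering constant $\frac{2}{27}(3\tau)^{3/2}$ is exactly the caustic value at which the discriminant $4\tau^3-27X^2$ vanishes and two of the three saddles coalesce into a double saddle at $V_\ast=\sqrt{\tau/3}$; there $f''(V_\ast)=0$ while $f'''(V_\ast)=-2\sqrt3\,\sqrt\tau\neq 0$, so the quadratic term in the local Taylor expansion drops out and the cubic term dominates. Setting $X=X_c+(3\tau)^{1/6}x$ with $X_c$ the caustic value gives $f'(V_\ast)=-(3\tau)^{1/6}x$, so under a local change of variable $V-V_\ast=c\,w$ with $c\sim\tau^{-1/6}$ the phase collapses, after a suitable choice of $c$, to a cubic Airy phase of the form $\tfrac13 w^3-x\,w$. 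This both explains the choice of the spatial scale $(3\tau)^{1/6}$ and produces, at leading order, the Airy integral.

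I would then carry out a genuine steepest-descent analysis: deform the vertical $U$-contour and the $\mathbf{X}$-shaped $V$-contour through the coalescing saddles along their steepest-descent directions, localize the dominant contribution to shrinking neighborhoods of $V_\ast$ (and the corresponding $U$-saddle), and perform the Airy change of variables to read off the extended Airy kernel from the leading term. The complementary Gaussian term of (\ref{Pearcey-kernel}), present when $t-s>0$, must be shown to converge to the heat-kernel piece of the extended Airy kernel at the same rate. The precise two-time coordination in the hypothesis, namely the expansion of $\frac{\tau_2-\tau_1}{2(t_2-t_1)}$, is exactly what is needed to align the subleading, quadratic-in-time contributions of the two phases, so that the limiting kernel carries the Airy times $t_1,t_2$ rather than $\tau_1,\tau_2$; demanding this match through the relevant order is what forces the displayed relation between the two sets of times.

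The main obstacle is the \emph{sharp} error control: producing the rate $O(\tau_1^{-4/3})$ uniformly in $x,y$ and uniformly over the windows $E_i$, so that the pointwise kernel estimates upgrade to the Fredholm determinant. This requires keeping the full subleading expansion of $f$ at the double saddle (the quartic and higher remainders, together with the $1/(U-V)$ prefactor), bounding the off-saddle tails of both contour integrals, and verifying the cancellation of the intermediate corrections. A more economical route, and the one the present paper advertises, is to use the Pearcey PDE $\BX=0$ of Corollary \ref{cor:no inl} together with the known PDE for the Airy process: insert the ansatz that the Pearcey log-gap-probability equals the Airy log-gap-probability plus a correction under the cusp scaling, verify that the Pearcey equation degenerates to the Airy equation at leading order, and use the equation to bootstrap the correction to the stated order. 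This is precisely the mechanism by which, as stressed just before the theorem, the PDE technology renders an otherwise very delicate asymptotic tractable.
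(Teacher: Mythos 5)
First, a point of orientation: the paper itself contains \emph{no} proof of Theorem \ref{Th: Airy by Pearcey}; it is explicitly presented ``without proof'' and attributed to \cite{ACvM}, with the surrounding remark stressing that the $O(\tau_1^{-4/3})$ rate is ``obtained via the PDE'' and is ``much better than any rough estimate one might predict.'' So the comparison must be with that advertised mechanism, and the question is whether your argument could actually deliver the stated \emph{rate}, which is the entire content of the theorem.

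Your first and most detailed route --- steepest descent on the no-inlier Pearcey kernel at the cusp --- has the saddle geometry exactly right: $f''$ vanishes at $V_\ast=\sqrt{\tau/3}$, the centering $\frac{2}{27}(3\tau)^{3/2}$ makes $f'(V_\ast)=-(3\tau)^{1/6}x$ after recentering, and the cubic normal form explains the $(3\tau)^{1/6}$ spatial scale. This would prove convergence of the gap probabilities to the Airy ones, but it cannot, as outlined, produce the theorem. A saddle-point expansion at a coalescing (cubic) critical point generically produces corrections at relative orders $\tau_1^{-1/3}$, $\tau_1^{-2/3}$, $\tau_1^{-1}$ before $\tau_1^{-4/3}$; the theorem asserts that, with the precise two-time calibration in the hypothesis, all of these intermediate corrections to the \emph{Fredholm determinant} vanish. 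Establishing that by kernel asymptotics would require computing the subleading kernels explicitly and showing that their pairings against the Airy resolvent integrate to zero --- a cancellation you nowhere address, and exactly the ``rough estimate'' barrier the paper warns about. Your fallback route, the PDE bootstrap, is indeed the mechanism of \cite{ACvM} (and is in the spirit of the argument sketched for Corollary \ref{cor:no inl}), but in your write-up it is one sentence. What is missing is the substance: (i) posit an expansion of the rescaled Pearcey log-gap-probability as the Airy log-gap-probability plus correction terms in inverse fractional powers of $\tau_1$; (ii) substitute into the Pearcey PDE of Corollary \ref{cor:no inl} and check that the leading term satisfies the Airy PDE --- it is this matching that \emph{forces} the displayed relation between $(\tau_1,\tau_2)$ and $(t_1,t_2)$, rather than being an input one chooses; and (iii) show that the resulting linear PDEs satisfied by the intermediate correction terms, combined with decay conditions at infinity, admit only the zero solution, which is where the upgrade from the naive order to $O(\tau_1^{-4/3})$ comes from. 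Step (iii) is the theorem; without it, your proposal establishes only the leading-order limit, not the statement in question.
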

 {\em Remark}: The $O\bigl( {\tau_1^{-4/3}}\bigr)$-approximation, obtained via the PDE is much better than any rough estimate one might predict. Also one expects that, in this precise limit, the Pearcey process with inliers 
   tends to the Airy process with outliers; see \cite{AvMD08}.
%

%\newpage

\section{Non-intersecting Brownian motions and a chain of Coupled Random Matrices}
Setting
$$
  \tau_k:=t_{k+1}-t_k
\mbox{~~and~~}
\frac{1}{\sg_k}:=\frac{1}{t_k-t_{k-1}}+\frac{1}{t_{k+1}-t_k},\quad\mbox{for~}1\leqs k\leqs m,
$$
%
%with the convention $t_{m+2}:=-t_{-1}:=\infty$, we have that
%%
%\begin{equation*}
%  \prod_{k=0}^m\det\left(e^{-\frac{(u_{k+1;j}-u_{k;i})^2}{t_{k+1}-t_k}}\right)_{1\leqs i,j\leqs N}
%    =e^{-\sum_{i=1}^N\sum_{k=0}^{m+1}\frac{u_{k;i}^2}{\sigma_k}}
%         \prod_{k=0}^m\det\(e^{\frac{2u_{k;i}u_{k+1;j}}{\tau_k}}\right)_{1\leqs i,j\leqs N}.
%\end{equation*}%
% 
%Therefore, 
and taking in (\ref{tr-pr}) the limit $\gamma_i\to0$, for $i=1,\dots,N$, leads to 
\bea
\lefteqn{\BP \left(
 \displaystyle \bigcap_{k=1}^m\left\{ \mbox{all}~ x_i(t_k)\in \tilde E_k\right\}
  \begin{tabular}{c|l}
    &\ $x_j(0)=0,~~x_j(1)=\delta_j$,  \\
    %$\displaystyle \bigcap_{k=1}^m\left\{ \mbox{all}~ x_i(t_k)\in E_k^c\right\} $ 
    &\ \mbox{for}~$j=1,\dots,N$%,\\
     %&\ the paths are non-intersecting
  \end{tabular}\right)\no}
  \\
  &=&\frac1{Z_n'}\int_{\tilde\BE^N}\Delta_N(u_1)\,
      \prod_{k=1}^m\left[
       \det\left(e^{\frac{2u_{k;i}u_{k+1;j}}{\tau_k}}\right)_{1\leqs i,j\leqs N}
      %e^{-\sum_{i=1}^N\frac{{\delta_i}^2}{\sigma_{m+1}}}
      \prod_{1\leqs i\leqs N}%\prodindex\,
      e^{-  \frac{u_{k;i}^2}{\sigma_k}} du_{k;i}\right]
      ,~~~~%\no\\
\eea
where $\Delta_N(u_1)$ stands for the Vandermonde determinant in the variables $u_1=(u_{1;1},\dots,u_{1;N})$.
Notice that each of the sets of variables $u_1,\dots,u_{m}$ appears in exactly two of the determinants in the above
integrand and that the other factors are insensitive to a permutation, for fixed $k$ with $1\leqs k\leqs m$, of the
variables $u_k=u_{k;1},\dots,u_{k;N}$. Therefore, taking the limit $u_{m+1;i}=\delta_i\to \beta_j$, for
$i=1,\dots,N$, with $n_\ell$ of the $\delta_i$ going to $\beta_\ell$, namely $u_{m+1;1},\dots,u_{m+1;n_1}\to
\beta_1$, and so on, making $m$ synchronized changes of variables, and using the symmetry of the integration ranges
vis-\`a-vis these variables $u_{k;1},\dots,u_{k;N}$,
%
%\be
%  \int_{\BE^N}\Delta_N(u_1)
%   \,e^{-\sum_{i=1}^N\frac{u_{m+1;i}^2}{\sigma_{m+1}}}
%  \prod_\prodindex
%     \(e^{\frac{2u_{k;i}u_{k+1;i}}{\tau_k}}
%     \,e^{-\frac{u_{k;i}^2}{\sigma_k}}\,du_{k;i}\).
%\ee
%
%Next, 
%
\bea \lefteqn{\BP \left(
  \begin{tabular}{c|l}
     &\ $x_j(0)=0$, ($j=1,\dots,N$),\\
     &\ $x_1(1)=\cdots=x_{n_1}(1)=\beta_1$,\\
      $\displaystyle \bigcap_{k=1}^m\left\{ \mbox{all}~ x_i(t_k)\in \tilde E_k\right\} $ 
     &$\qquad\qquad$ $\vdots$\\
     &$x_{N-n_p+1}(1)=\cdots=x_N(1)=\beta_p$%,\\
    %&the paths are non-intersecting
  \end{tabular}\right)}\no\\
  &=&\frac1{Z_n''}\int_{\tilde\BE^N}\Delta_N(u_1)\prod_{\ell=1}^p\left(\Delta_{n_\ell}(\u{\ell}m)
      \prod_{i=1}^{n_\ell}e^{-\sum\limits_{k=1}^{m}
      \frac{{\u\ell{k;i}}^2}{\sigma_k}+\sum\limits_{k=1}^{m-1}\frac{2\u\ell{k;i}\u\ell{k+1;i}}{\tau_k}+
      \frac{2\beta_\ell\u\ell{m;i}}{\tau_m}}\right)\prod_\prodindex\,du_{k;i},\no\\
  &=&\frac1{Z_n'''}\int_{\BE^N}\Delta_N(v_1)\prod_{\ell=1}^p\left(\Delta_{n_\ell}(\v\ell m)
      \prod_{i=1}^{n_\ell}e^{{-\sum\limits_{k=1}^{m}\frac12{{\v\ell{k;i}}^2}+ 
      \sum\limits_{k=1}^{m-1}c_k\v\ell{k;i}\v\ell{k+1;i}}+{b_\ell\v\ell{m;i}}}\right)\prod_\prodindex\,dv_{k;i},\no\\
% \eea\bea 
 &=:&\frac1{Z_n'''}\int_{\BE^N}I_n(v)\prod_{k=1}^m\,dv_k,\label{eq:right_disjoint}\\
  &=&\frac1{\tilde Z_n}\int_{\spec(M_k)\in E_k}e^{-\frac12\tr
       \left(\sum\limits_{k=1}^mM_k^2-2\sum\limits_{k=1}^{m-1}c_kM_kM_{k+1}-2AM_m\right)}\prod_{k=1}^m dM_k,
       \label{coupled matrices}
\eea%\be \label{coupled matrices}\ee
where the diagonal matrix $A,~c_k,~\tilde b_\ell$ and $\tilde E_k$ were defined in (\ref{change of variables}) or
alternatively expressed below in terms of the $\sigma_k$'s and $\tau_k$'s. The last integration is taken over Hermitian matrices, with $\spec(M_k)\in E_k$. Also the change of integration variables
$\u\ell{k;i}\mapsto \v\ell{k;i}$ above is given by 
\bean
  \v\ell{k;i}=\sqrt{\frac2{\sigma_k}}\u\ell{k;i},
  \ c_k=\frac{\sqrt{\sigma_k\sigma_{k+1}}}{\tau_k},
  \ b_\ell=\frac{\sqrt{2\sigma_m}}{\tau_m}\beta_\ell,
  \ E_k=\sqrt{\frac2{\sigma_k}}\tilde E_k.
%\\
%  A=\diag(\overbrace{\tilde b_1,\dots,\tilde b_1}^{n_1},\overbrace{\tilde b_2,\dots,\tilde b_2}^{n_2},\dots,
%       \overbrace{\tilde b_p,\dots,\tilde b_p}^{n_p}),\qquad\qquad
\eean
For $k=1,\dots,m$ and for $\ell=1,\dots,p$, the vector $\u\ell k=(\u\ell{k;1},\dots,\u\ell{k;n_\ell})$ is
defined by
\bean
  (u_{k;1},\dots,u_{k;N})&=&(\u1{k;1},\dots,\u1{k;n_1},\u2{k;1},\dots,\u2{k;n_2},\dots,\u p{k;1},\dots,\u p{k;n_p}).%\\
%    &=&(\u k{[1]},\u k{[2]},\dots,\u  k{[p]}),
\eean
%
%and where the $v$ variables are the $u$ variables, rescaled (which induces a rescaling of the unions of intervals
%$E_k$ into $\tilde E_k$).
%
%For future use, we shall be interested in the inverse of a Jacobi matrix, formed of the constants $c_i$, namely
%. The constants $c_1,\dots,c_{m-1}$, which appear in $I_n$, will show up in some of the computations below as thenon-diagonal entries of a Jacobi matrix; we therefore define the following matrix:
%
Concerning the Jacobi matrix (\ref{Jacobi}), one needs the following formulas for derivatives of $\J{}{}$; they can be shown by recurrence:
\be
  c_1\pp{}{c_1}\J mm=-2\J m1^2,\qquad c_{m-1}\pp{}{c_{m-1}}\J m1=-\J m1(2\J mm+1).\label{eq:J_ders}
\ee

\section{Integrable deformations}
In this section, we introduce a time deformation $\tilde I_n(v)$ of the integrand $I_n(v)$, introduced in
(\ref{coupled matrices}). % which came up in the previous section inthe context of non-intersecting Brownian
The deformation is chosen such that the resulting integral is on the one hand a solution to the
multi-component KP hierarchy (see \cite{AMVmops} and Proposition \ref{lemma 2.3} below) and satisfies on the other
hand a set of Virasoro constraints. We will impose on the rescaled target points $b_1,\dots, b_p$, which we
henceforth denote by $\b11,\dots,\b p1,$ a non-trivial linear constraint
\be\label{eq:b_constraint}
  \sum_{\ell=1}^p\kappa_\ell\b\ell1=0.
\ee
Without loss of generality, we may assume (upon reordering) that $\kappa_p\neq0$ and impose if
$\sum_1^p\kappa_\ell\neq0$ that $\sum_{\ell=1}^p \kappa_\ell=1$; also define $\kappa_0:=-1$. Thus, the non-deformed
integral which we will consider is
\be
   \int_{\BE^N} \left. I_n(v)\right|_\cons\prod_{k=1}^m\,dv_k.
\ee
The integrand $I_n(v)$ will be deformed by four sets of parameters: ({\bf i})  A first set, denoted by $\b12,\dots,\b p2$, deforms the parameters
$\b\ell1$.  They are subjected to the same constraint
(\ref{eq:b_constraint}) as the parameters $\b\ell1$, namely\footnote{The combination of the two constraints
(\ref{eq:b_constraint}) and (\ref{eq:b2_constraint}) will in the formulas below be denoted by
$\sum_{1}^{p}\kappa_\ell\b \ell {1,2}=0.$}
\be\label{eq:b2_constraint}
  \sum_{\ell=1}^p\kappa_\ell\b\ell2=0.
\ee
({\bf ii}) A second set of deformations consists of parameters corresponding to the KP time variables;
they are denoted by $\s0r$ ($r\in \BZ_{>0}$) for the parameters  going with the starting point $0$ of the
Brownian motion and $\s\ell r$ ($1\leqs\ell\leqs p$ and $r\in \BZ_{>0}$) for the parameters going with the
$\ell$-th end point of the Brownian motion. ({\bf iii}) There is furthermore a set of parameters $\gamma_r^{(k)}$ ($2\leqs
k\leqs m-1$ and $r\in \BZ_{>0}$) going with the intermediate times $t_2,\dots,t_{m-1}$ and ({\bf iv}) a set of parameters
$c^{(k)}_{r,q}$ ($k=1,\dots,m-1$ and\footnote{The inequality $(r,q)>(1,1)$ means by definition that
$r\geqs1,\,q\geqs1$ and $(r,q)\neq (1,1)$.} $(r,q)>(1,1)$), going with consecutive times $t_k,t_{k+1}$.

For $n=(n_1,\dots,n_p)$ and $\BE=E_1\times E_2\times\cdots\times E_m$, where each $E_k$ is the union of a finite
number of intervals in $\BR$, define
\be\label{def:tau_n}
  \tau_n(\BE)%&=&\tau_{n_1,\dots,n_p}(E_1,\dots,E_m;\b 11,\dots,\b {p-1}1;\s 0{},\s 1{},\dots,\s p{};\b 12,\ldots,\b{p-1}2)\no\\ &=& 
  :=\int_{\BE^N}\left. \tilde I_n(v)\right|_\cons\prod_{k=1}^m\,dv_k,
\ee
where
\bean
  \tilde I_n(v)&=&I_n(v)\times\\
  &&\prod_{\ell=1}^p\prod_{i=1}^{n_\ell}
                   e^{\b\ell2\v\ell{m;i}^2+\sum\limits_{r\geqs1}(\s0 r\v\ell{1;i}^r-\s\ell
              r\v\ell{m;i}^r)+\sum\limits_{k=1}^{m-1}\sum\limits_{\rq}c^{(k)}_{rq}\v\ell{k;i}^r\v\ell{k+1;i}^{q}+
              \sum\limits_{k=2}^{m-1}\sum\limits_{r\geqs 1}\gamma^{(k)}_r\v\ell{k;i}^r},\no\\ \mbox{with}&&\no\\
  I_n(v)&=&\frac{\Delta_N(v_1)}{\prod_{\ell=1}^pn_\ell!}
      \prod_{\ell=1}^p\left(\Delta_{n_\ell}(\v\ell m)
      \prod_{i=1}^{n_\ell}e^{\sum\limits_{k=1}^{m-1}{c_k\v\ell{k;i}\v\ell{k+1;i}}-\frac12\sum\limits_{k=1}^{m}
           {{\v\ell{k;i}}^2}+{\b\ell1\v\ell{m;i}}}\right).\no
\eean
%\bea
%  I_n(x,y)&:=& \Delta_N(x)\left.\prod^p_{\ell=1}\(\frac{\Delta_{n_{\ell}}(\y\ell{})}{n_\ell!}
%       \prod_{j=1}^{n_{\ell}}e^{-\frac12\x\ell j^2-\frac12\y\ell j^2
%      +c\x\ell j\y\ell j+\b\ell1\y\ell j}\right.\right.\\
%     &&\quad\left.e^{\sum_{k\geqs1}(\s0 k\x\ell j^k-\s\ell k\y\ell j^k)+\b\ell2\y\ell j^2
%         +\sum_\rq  c_{rq}(\x\ell j)^r(\y\ell j)^{q}}\)\no
%\eea
%
%$$
%  I_n(x,y):= \Delta_N(x)\left.\prod^p_{\ell=1}\(\frac{\Delta_{n_{\ell}}(\x\ell{})}{n_\ell!}
%       \prod_{j=1}^{n_{\ell}}e^{-\frac12\x\ell j^2
%      +\b\ell1\x\ell j+\b\ell2\x\ell j^2+\sum_{i=1}^{\iy}(\s0 i-\s\ell i)\x\ell j^i}\)\right|_\cons
%$$
%
%and
%
%\bea
%  x&=&(x_1,\dots,x_N)=(\x11,\x12,\dots,\x1{n_1},\x21,\x22,\dots,\x2{n_2},\dots,\x p{n_p}),\no\\
%  y&=&(y_1,\dots,y_N)=(\y11,\y12,\dots,\y1{n_1},\y21,\y22,\dots,\y2{n_2},\dots,\y p{n_p}).\no
%\eea
%
We denote by $\LR$ the locus corresponding to setting all deformation parameters equal to zero, so that
$\left.\tilde I_n\right|_\LR=I_n$, 
\be \LR=\left\{
  \begin{array}{l}
    \s0r,\dots,\s pr=0,\quad r\in\BZ_{>0},\\
    \b12,\dots,\b p2=0,\\
    \gamma^{(2)}_r,\dots,\gamma^{(m-1)}_r=0,\quad r\in\BZ_{>0},\\
    c^{(1)}_{rq},\dots, c^{(m-1)}_{rq}=0,\quad\rq
  \end{array}
  \right\}.\label{eq:LR}
\ee
%
%By construction, .  
%
%\smallskip
%
We list a number of operator identities, valid when acting on $\tau_n(\BE)$,
\bea
  \ppb{\ell}h&=&-\pps{\ell}h+\frac{\kappa_\ell}{\kappa_p}\pps{p}h,~~1\leqs\ell\leqs p-1,\,h=1,2,\label{time-identities1}
\\%\bea
  \sum_{\ell=1}^p \b{\ell}j\pps\ell h&=&- \sum_{\ell=1}^{p-1}\b\ell j\ppb \ell h,~~h,j\in\{1,2\},\label{time-identities2}
\\
  \pps\ell h&=&-(1-\delta_{\ell,p})\ppb\ell h +\kappa_\ell\(\sum_{i=1}^p\pps ih+\sum_{i=1}^{p-1}\ppb i h\) \no\\
             &=&\p\ell{b_h}+\kappa_\ell\sum_{i=1}^p\pps ih,\label{time-identities4}~~~~~~~~~~h=1,2,\ 1\leqs\ell\leqs p,
\eea
where  for $h=1,2$ and $1\leqs\ell\leqs p$ we define
\bea\label{def:pbk}
  \p{\ell}{b_h}:= -(1-\delta_{\ell,p})\ppb \ell{h}+\kappa_\ell\sum_{i=1}^{p-1}\ppb
  ih=\sum_{i=1}^{p-1}(\kappa_\ell-\delta_{\ell,i})\ppb ih,
\eea
implying
\be
  \sum_{\ell=1}^p  \p{\ell}{b_h}=0.\label{eq:sum=0}
\ee
Using $\sum_{\ell=1}^p \kappa_\ell\b\ell h=0$, one first establishes identity (\ref{time-identities1}) and then
(\ref{time-identities2}), while the first equality in (\ref{time-identities4}) is obtained by computing
$\sum_{i=1}^{p-1}\ppb ih$ from (\ref{time-identities1}) and by using $\sum_{\ell=1}^p\kappa_\ell=1$ and the
identity (\ref{time-identities1}). 

From section 7.3 in \cite{AMVmops}, it follows that $\tau_n(\BE)$ can be written as
\be
  \tau_n(\BE)=\det
  \left(
  \begin{array}{c}
    \left(\langle y^i\varphi_1(y)\mid x^j\psi(x)\rangle\right)_{
      \renewcommand{\arraystretch}{0.6}
        \tiny{
          \begin{array}{c} 
            0\leqs i<n_1\\
            0\leqs j<N
          \end{array}}}\\
    \vdots\\
    \left(\langle y^i\varphi_p(y)\mid x^j\psi(x)\rangle\right)_{
      \renewcommand{\arraystretch}{0.6}
        \tiny{
          \begin{array}{c} 
            0\leqs i<n_p\\
            0\leqs j<N
          \end{array}}}
    \end{array}
    \right),\label{eq:tau_det}
\ee
where 
\bean
  \psi(x)&:=&\exp{\(-\frac12x^2+\sum\limits_{r\geqs1}\s0r x^r\)},\\
  \varphi_\ell(y)&:=&\exp{\(-\frac12y^2+\b\ell1y+\b\ell2y^2-\sum\limits_{r\geqs1}\s\ell r y^r\)},
\eean
for $\ell=1,\dots,p$, and where the inner product $\langle\cdot\mid\cdot\rangle$ is defined by \be \langle f(y)\mid
g(x)\rangle :=\iint_{E_1\times E_m}f(y)g(x){\mu(x,y)}\,dx\,dy,\no \ee
with 
\bea
  \mu(x,y)&:=&\int_{\prod_{k=2}^{m-1}E_k}
    \exp\sum_{k=2}^{m-1}\left(-\frac12w_{k}^2+\sum\limits_{r\geqs 1}\gamma^{(k)}_rw_{k}^r\right)\times\no\\
    &&\qquad\exp\sum_{k=1}^{m-1}\left(c_k w_kw_{k+1}+\sum_\rq c^{(k)}_{rq}w_{k}^rw_{k+1}^{q}\right)\prod_{k=2}^{m-1}dw_{k},\no 
\eea
$w_1:=x$ and $w_{m}:=y$. For $m=2$ the latter formula for $\mu$ should be interpreted as $\mu(x,y):=1$, while
$\mu(x,y):=\delta(x-y)e^{x^2/2}$ (the delta distribution) in the case of $m=1$.

\smallskip

The above representation (\ref{eq:tau_det}) of $\tau_n$ implies, in view of \cite[Prop.\ 6.2]{AMVmops}, that
$\tau_n$ is a tau function of the $p+1$ component KP hierarchy, in particular we have the following Proposition.
\begin{proposition} \label{lemma 2.3}
  The function $\tau_n=\tau_n(\BE)$, as in (\ref{def:tau_n}), satisfies for $1 \leqs\ell\leqs p$
  \begin{equation}\label{C008}
    \pps01\ln\frac{\tau_{n+e_\ell}}{\tau_{n-e_\ell}}= 
        \frac{\frac{\pl^2}{\pl\s02\pl\s\ell1}\ln\tau_n} {\frac{\pl^2}{\pl\s01\pl\s\ell1}\ln\tau_n},~~~~    
    \pps\ell1\ln\frac{\tau_{n+e_\ell}}{\tau_{n-e_\ell}}= 
        -\frac{\frac{\pl^2}{\pl\s01\pl\s\ell2}\ln\tau_n} {\frac{\pl^2}{\pl\s01\pl\s\ell1}\ln\tau_n},
  \end{equation}
  where $n\pm e_\ell=(n_1,\dots,n_p)\pm e_\ell:=(n_1,\dots,n_{\ell-1},n_\ell\pm1,n_{\ell+1},\dots,n_p)$.
\end{proposition}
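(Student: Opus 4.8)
The plan is to read these two relations off from the $(p+1)$-component KP structure already in hand. By the determinantal representation (\ref{eq:tau_det}) together with \cite[Prop.\ 6.2]{AMVmops}, $\tau_n(\BE)$ is a tau function of the $(p+1)$-component KP hierarchy, with colors $0,1,\dots,p$: the times $\s0r$ belong to color $0$ (they enter through $\psi$, since $\pps0r\psi=x^r\psi$), the times $\s\ell r$ to color $\ell$ (they enter through $\varphi_\ell$, since $\pps\ell r\varphi_\ell=-y^r\varphi_\ell$), and the discrete charge is recorded by $n=(n_1,\dots,n_p)$. In this dictionary the shifts $n\mapsto n\pm e_\ell$ are precisely the elementary charge shifts of color $\ell$ against color $0$, changing the matrix size $N=\sum_j n_j$ by $\pm1$, and (\ref{C008}) are two specific instances of the Hirota bilinear identities that any such tau function obeys.

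Concretely, I would introduce the Baker--Akhiezer (wave) vectors attached to colors $0$ and $\ell$, written as ratios of charge-shifted tau functions times $e^{\sum_r \s{\cdot}{r}z^r}$. Their leading coefficients are exactly the ratios $\tau_{n\pm e_\ell}/\tau_n$, while their first subleading coefficients in $z^{-1}$ are single log-derivatives $\pl_{s}\ln\tau_n$. I would then write the bilinear residue identity coupling color $0$ to color $\ell$, expand both factors near $z=\infty$, and match the two lowest orders. Matching the $\s01$-order yields $\pps01\ln(\tau_{n+e_\ell}/\tau_{n-e_\ell})$ on the left and, after dividing out leading coefficients, the quotient $\frac{\partial^2}{\partial\s02\partial\s\ell1}\ln\tau_n\big/\frac{\partial^2}{\partial\s01\partial\s\ell1}\ln\tau_n$ on the right; matching the $\s\ell1$-order yields the second relation, the minus sign coming from the sign in $\pps\ell r\varphi_\ell=-y^r\varphi_\ell$.

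Equivalently, and this is the route I would actually carry through to fix all constants, I would argue at the level of the moment determinant itself. Since $\pps0r$ shifts the column degree $j\mapsto j+r$ and $\pps\ell r$ shifts the row degree $i\mapsto i+r$ inside block $\ell$ (with a sign), the three second derivatives $\frac{\partial^2}{\partial\s02\partial\s\ell1}\ln\tau_n$, $\frac{\partial^2}{\partial\s01\partial\s\ell2}\ln\tau_n$ and $\frac{\partial^2}{\partial\s01\partial\s\ell1}\ln\tau_n$ become ratios of singly- and doubly-bordered minors of the moment matrix, while $\tau_{n\pm e_\ell}$ are the determinants obtained by enlarging or shrinking block $\ell$ together with the column range. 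The two relations then follow from the Desnanot--Jacobi (Dodgson) identity relating a determinant to its bordered minors; applying $\pps01$ or $\pps\ell1$ to the logarithmic ratio reorganizes these minors into the stated quotients.

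The main obstacle is bookkeeping rather than conceptual. In the wave-function route one must track the charge-dependent sign (cocycle) factors entering the multicomponent bilinear identity and pin down exactly which Taylor coefficient of a wave function equals which second log-derivative of $\tau_n$ --- in particular accounting for the single minus sign in the second formula and its absence in the first. Because the shifts $e_\ell$ change $N$, the normalizations of the wave functions across neighboring charges must be aligned before coefficients may be compared; in the determinantal route the same care reappears as the need to choose the bordering rows and columns consistently across $\tau_{n+e_\ell}$, $\tau_n$ and $\tau_{n-e_\ell}$.
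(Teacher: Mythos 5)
Your proposal is correct and follows essentially the same route as the paper: the paper obtains (\ref{C008}) precisely by observing that the determinantal representation (\ref{eq:tau_det}) makes $\tau_n$ a tau function of the $(p+1)$-component KP hierarchy in the sense of \cite[Prop.~6.2]{AMVmops}, with the charges $n\pm e_\ell$ and the times $\s0r$, $\s\ell r$ identified exactly as you describe, the two identities being the standard consequences of the corresponding bilinear (Hirota) identities. The only difference is presentational: the paper delegates the residue-expansion and sign bookkeeping you outline to the cited reference rather than carrying it out.
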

Both equations will play an important role in Section \ref{sec:PDE} below.

\section{The Virasoro constraints}
%
%A disjoint union of intervals $E=\cup_{i=1}^{r}[z_{2i-1},z_{2i}]\subset \BR$ leads to differential operators in the
%boundary points of $E$,
%%
%\begin{equation*}
%  \D k(E):=\sum_{i=1}^{2r} z_i^{k+1}\pp{}{z_i}\qquad k=-1,0,1,\ldots.
%\end{equation*}%
%
% in the boundary points of $E$. 
%
Remembering the definition (\ref{eq:sum_and_eul}) of the operators $\pl_E$ and $\vr_E$ and the definition
(\ref{def:pbk}) of the operators $\p\ell{b_k}$, define for $\ell=1,\dots,p$ the operators:
\bea
  \B01&:=&\sum_{k=1}^m \J 1k\pl_{E_k}-2\J 1m\sum_{i=1}^{p-1}\b i2\ppb i1,\label{for:B_defs_1}\\
  \B\ell1&:=&\p\ell{b_1}-\kappa_\ell\( \sum_{k=1}^m\J mk\pl_{E_k}-2\J mm\sum_{i=1}^{p-1}\b i2\ppb i1\),\label{for:B_defs_2}\\
  \B02&:=&-\vr_{E_1}+c_1\ppc1+\delta_{1,m}\left(\sum_{i=1}^{p-1}\b i1\ppb i1+2\sum_{i=1}^{p-1}\b i2\ppb i2\),\label{for:B_defs_3}\\
  \B\ell2&:=&\p\ell{b_2}-\kappa_\ell\(-\vr_{E_m}+c_{m-1}\ppc{m-1}+\sum_{i=1}^{p-1}\b i1\ppb i1+2\sum_{i=1}^{p-1}\b i2\ppb i2\).%\no\\
 ~~ \label{for:B_defs_4}
\eea
We show in the following proposition, how the action of these operators on the tau function can be represented by
time derivatives.
\begin{proposition} \label{lemma 2.1}
  The integral $\tau_n(\BE)$, as in (\ref{def:tau_n}), satisfies\footnote{Recall that $\kappa_0=-1$.}, for
  $\ell=0,\dots,p$ and $h=1,2$,
  \be
   \B\ell h\ln\tau_n\!\!=\!\!\(\pps\ell h+\kappa_\ell\Sg\ell h\)\ln\tau_n+\kappa_\ell\Tg\ell h, \label{A,B ident}
  \ee
  where
  \bea
    \Tg\a1&=&\left\{
      \begin{array}{ll}
        \ds-\J11 N\s01-\J1m\sum_{\ell=1}^p n_\ell(\b\ell 1-\s\ell1)&\a=0,\\
        \ds-\J1m N\s01-\J mm\sum_{\ell=1}^p n_\ell(\b\ell1-\s\ell1)\qquad&\a\neq0,\\
      \end{array}\right.\label{eq:Ta1}
    \\
    \Tg\a2&=&\left\{
      \begin{array}{ll}
        \ds\sum_{1\leqs i\leqs j\leqs p}n_in_j & m=1,\\
        N(N+1)/2&\a=0\hbox{ and }m>1,\\
        \ds\sum_{\ell=1}^p n_\ell(n_\ell+1)/2\qquad&\a\neq0\hbox{ and }m>1,
      \end{array}\right.\label{eq:Ta2}
  \eea
and each $\Sg\a h$ is a homogeneous first order differential operator in all deformation parameters, except for the
deformation parameters $\b\ell2$, so that $\Sg\a k\Big|_{\LR}=0$, and moreover, for $k=1,2$ and for
$\ell=1,\dots,p$,
\begin{equation}\label{for_sig_comm} 
  \left[\pps\ell1,\Sg 0h\right]=\delta_{h,2}\delta_{1,m}\pps\ell1,\qquad 
  \left[\pps01,\Sg \ell 2\right]=\delta_{1,m}\pps01.
\end{equation}
\end{proposition}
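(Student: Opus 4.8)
The plan is to prove Proposition \ref{lemma 2.1} by a direct computation: each operator $\B\ell h$ defined in (\ref{for:B_defs_1})--(\ref{for:B_defs_4}) is assembled out of boundary-point derivatives $\pl_{E_k}$, Euler operators $\vr_{E_k}$, coupling derivatives $\ppc k$ and target-point derivatives $\ppb ih$, and I want to show that acting on $\ln\tau_n$ it reproduces a KP-time derivative $\pps\ell h$ plus lower-order correction terms. The natural mechanism is a change-of-variables/Virasoro argument applied to the integral representation (\ref{def:tau_n}): one inserts an infinitesimal transformation of the integration variables $\v\ell{k;i}\mapsto\v\ell{k;i}+\varepsilon\, f(\v\ell{k;i})$ into $\tau_n(\BE)$, computes the resulting variation in two ways (as a change in the integrand, and as a change in the integration domain $\BE$), and equates them. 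Choosing $f$ to be a monomial vector field $v\mapsto v^{q+1}$ for $q=-1,0,1$ produces the three families of Virasoro-type constraints that, after translation through the dictionary (\ref{time-identities1})--(\ref{time-identities4}) between the $\b ih$-derivatives and the KP-time derivatives $\pps\ell h$, become exactly (\ref{A,B ident}).

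First I would fix the monomial weight. The $h=1$ statement should come from the vector field generating a shift/scaling tied to the $b_\ell^{(1)}$ and linear KP times, while the $h=2$ statement comes from the next-order field tied to $b_\ell^{(2)}$ and the quadratic data; concretely I expect $h=1$ to correspond to the degree-$q=0$ (Euler-type) Virasoro generator acting in each block and $h=2$ to the degree-$q=1$ generator. For each fixed $\ell$ (including $\ell=0$ attached to the source point, with $\kappa_0=-1$), I would apply the variation only to the variables $\v\ell{k;i}$ in the $\ell$-th block of (\ref{def:tau_n}), track how it hits (a) the Vandermonde factors $\Delta_N(v_1)$ and $\Delta_{n_\ell}(\v\ell m)$, producing the $N$- and $n_\ell$-dependent constants that become $\Tg\ell h$ in (\ref{eq:Ta1})--(\ref{eq:Ta2}); (b) the Gaussian and coupling weights $-\tfrac12\v{}{}^2$, $c_k\v{}{}\v{}{}$, $\b\ell1\v{}{}$, $\b\ell2\v{}{}^2$, producing the $\ppc k$, $\ppb i1$, $\ppb i2$ and Euler terms; and (c) the KP-deformation weights, producing the $\pps\ell h$ terms together with the homogeneous first-order operators $\Sg\ell h$. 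Because the $\Sg\ell h$ collect exactly the contributions from the KP-deformation parameters (other than the $\b\ell2$, which are absorbed into the definitions of $\B\ell h$), they vanish identically on the locus $\LR$ of (\ref{eq:LR}) where all those parameters are set to zero, which gives $\Sg\a k|_{\LR}=0$.

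For the inverse-Jacobi coefficients $\J 1k$ and $\J mk$ appearing throughout (\ref{for:B_defs_1})--(\ref{for:B_defs_4}): these enter because the boundary-variation of one integration variable $v_{k}$ propagates through the quadratic coupling form whose matrix is precisely (\ref{Jacobi}); solving the resulting linear recurrence in $k$ is exactly inversion of that tridiagonal matrix, so the $\J 1k$ and $\J mk$ are forced on us rather than assumed, and the derivative identities (\ref{eq:J_ders}) are what let me rewrite the $c_k\ppc k$ contributions in the stated closed form. The commutation relations (\ref{for_sig_comm}) I would verify last, by differentiating the explicit monomial form of $\Sg\a h$ in the deformation parameters with respect to $\s\ell1$ or $\s01$ and reading off that the commutator collapses to a single term proportional to $\delta_{1,m}$ (the $m=1$ degenerate case is where $\mu(x,y)=\delta(x-y)e^{x^2/2}$ forces the source and target blocks to coincide, producing the extra contribution).

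The main obstacle will be bookkeeping the simultaneous, \emph{synchronized} variation across all $m$ time-slices while correctly separating the $\kappa_\ell$-weighted part (which becomes the $\Sg\ell h$ and $\Tg\ell h$ pieces) from the bare $\pps\ell h$ part. Two subtleties demand care: first, keeping the linear constraint $\sum_\ell\kappa_\ell\b\ell h=0$ consistent under the variation, which is what makes the combinations $\pps\ell h+\kappa_\ell\Sg\ell h$ (rather than $\pps\ell h$ alone) the natural objects and is the reason the dictionary (\ref{time-identities1})--(\ref{time-identities4}) must be invoked to re-express $\ppb ih$ in terms of $\pps\ell h$; and second, correctly handling the boundary contributions at $\pm\infty$ versus at the finite endpoints of $E_k$ — only the latter survive and feed into $\pl_{E_k}$ and $\vr_{E_k}$, and one must check that the Gaussian decay kills all genuine boundary terms so that the integration-by-parts in the interior is valid. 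Once these are controlled, matching the degree-$0$ field to $h=1$ and the degree-$1$ field to $h=2$ yields (\ref{A,B ident}) with the asserted $\Tg\a h$, $\Sg\a h$ and commutators.
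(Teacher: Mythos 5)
Your overall mechanism is the right one, and it is essentially the paper's: the proof does proceed by re-expressing boundary derivatives of $\tau_n(\BE)$ as integrals of total derivatives of $\tilde I_n$ (a Virasoro-type variation), re-reading each monomial so produced as a derivative in some deformation parameter, and then passing through the dictionary (\ref{time-identities1})--(\ref{time-identities4}); the inverse Jacobi matrix enters exactly as you say, by solving the linear system that the couplings $c_k$ create between the variations at the different time slices. However, your degree assignment of the vector fields is off by one, and this is not a labeling slip. The operators $\B\ell1$ are built from the $\pl_{E_k}$, and by the fundamental theorem of calculus $\pl_{E_k}\tau_n=\int_{\BE^N}\sum_{i=1}^N\pl\tilde I_n/\pl v_{k;i}$, so the $h=1$ identities come from the \emph{translation} field ($q=-1$), not the Euler field; likewise $\B\ell2$ is built from $\vr_{E_k}$, which corresponds to $\int_{\BE^N}\sum_{i=1}^N\pl\bigl(v_{k;i}\tilde I_n\bigr)/\pl v_{k;i}$, i.e.\ the Euler field $q=0$, not $q=1$. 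With your choice, the $h=1$ computation would produce $\vr_{E_k}$-type identities that cannot match the definitions (\ref{for:B_defs_1})--(\ref{for:B_defs_2}) of $\B\ell1$, and the $q=1$ field $v^2\pl/\pl v$ would bring down cubic monomials whose re-expression needs derivatives in third-order parameters such as $\s\ell3$, which appear nowhere in the statement; the constants (\ref{eq:Ta1})--(\ref{eq:Ta2}) would also come out wrong ($\Tg\a1$ comes from the linear weight terms under translation, while the Vandermonde contributions $N(N+1)/2$, $n_\ell(n_\ell+1)/2$ only arise for the Euler field).

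The second, more serious gap is the plan to vary ``only the variables $\v\ell{k;i}$ in the $\ell$-th block.'' The sets $E_k$ constrain all $N$ particles at once, so a boundary derivative $\pl_{E_k}\tau_n$ (or $\vr_{E_k}\tau_n$) is, by definition, the integral of the divergence over \emph{all} $N$ variables at time $k$; a per-block variation is not a derivative of $\tau_n$ in any of the variables occurring in the proposition. Moreover, a per-block variation does not close on the deformation parameters: varying block $\ell$ at time $m$ brings down $c_{m-1}\sum_{i=1}^{n_\ell}\v\ell{m-1;i}$ and block-restricted mixed moments from the $c^{(k)}_{rq}$-terms, and no available parameter couples to a block-restricted sum of time-$(m-1)$ variables ($\s0r$ and the $c^{(k)}_{rq}$ all couple to all-block sums); at time $1$ the full Vandermonde $\Delta_N(v_1)$ couples the blocks as well, since $\sum_{i\in B}\pl\Delta_N/\pl v_{1;i}\neq0$ for a proper subset $B$. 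In the paper the $\ell$-dependence of (\ref{A,B ident}) does not come from per-block variations at all: one first derives the all-block identities for $\pl_{E_k}\tau_n$ and $\vr_{E_k}\tau_n$, and the index $\ell$ enters only afterwards, through the operators $\p\ell{b_h}$ and the identity $\pps\ell h=\p\ell{b_h}+\kappa_\ell\sum_{i=1}^p\pps ih$ of (\ref{time-identities4}), i.e.\ through the constraint-compatible dictionary, with $\ell=0$ referring to the time-$1$ slice (all $N$ particles) rather than to any block. Until you replace the per-block variation by the all-variable one and fix the degrees, the computation will not produce the stated $\Tg\a h$, $\Sg\a h$, or the commutation relations (\ref{for_sig_comm}).
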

\begin{proof}
We give a detailed proof for the case of $m=2$ (see remark \ref{rem:lemma_m_big} for the case of $m>2$ and see
remark \ref{rem:lemma_m=1} for the special case of $m=1$). Then $c_{m-1}=c_1,$ which we simply write as $c$. Also,
$\J{}{}$ is the $2\times2$ matrix
\begin{equation*}
  \J{}{}=\begin{pmatrix} -1&c\\ c&-1\end{pmatrix}^\mi=\frac{-1}{1-c^2}
          \begin{pmatrix} 1&c\\ c&1\end{pmatrix}.
\end{equation*}%
In this case, referring to (\ref{def:tau_n}), there are two sets of variables $v_1$ and $v_2$, which we denote by
$x$ and $y$, there are no deformation parameters $\gamma^{(k)}_r$ and there is a single set of deformation
parameters $c^{(1)}_{rq}$, which we will denote by $c_{rq}$. For $E_1,E_2\subset\BR$, and taking into account the
usual constraint $\sum_{\ell=1}^{p}\kappa_\ell\b \ell {1,2}=0$,%each one a union of intervals,
\be
  \tau_n(E_1,E_2)%&=&\tau_{n_1,\dots,n_p}(E_1,\dots,E_m;\b 11,\dots,\b {p-1}1;\s 0{},\s 1{},\dots,\s p{};\b 12,\ldots,\b{p-1}2)\no\\ &=& 
  :=\frac1{\prod_{\ell=1}^p n_\ell !}\iint_{E_1^N\times E_2^N} \tilde I_n(x,y)\,dx\, dy,
\label{tau-one-time}\ee
where
\bea
  \tilde I_n(x,y)&=&\Delta_N(x)
      \prod_{\ell=1}^p\left(\Delta_{n_\ell}(y)
      \prod_{i=1}^{n_\ell}e^{-\frac12{\x\ell i}^2-\frac12{\y\ell i}^2+{c\x\ell{i}\y\ell{i}}+
        {\b\ell1\y\ell{i}}}\right.\times\label{eq:3.8'}
   \\
  &&\qquad\qquad\quad\left.
                   e^{\b\ell2\y\ell{i}^2+\sum\limits_{r\geqs1}(\s0 r\x\ell{i}^r-\s\ell
              r\y\ell{i}^r)+\sum\limits_{\rq}c^{}_{rq}\x\ell{i}^r\y\ell{i}^{q}}\right).\no
\eea

We first compute the action of the operators $\pl_{E_k}$ and $\vr_{E_k}$ on the tau function
(\ref{tau-one-time}).  We start with $\pl_{E_2}$. Using the fundamental theorem of calculus and the fact that
$\sum_{i=1}^N \frac{\pl}{\pl y_i}\Dt_N(y)=0$, we compute from (\ref{eq:3.8'}) that
{\footnotesize 
\bea
 % \lefteqn{\D\mi(E_2)\tau_n}\\
 \pl_{E_2}\tau_n \!\!&=&\iint\limits_{E_1^N\times E_2^N}%\left.
   \sum_{i=1}^N\pp{\tilde I_n}{y_i}(x,y)%\right|_\cons
   \,dx\,dy\no\\
  &=&\iint\limits_{E_1^N\times E_2^N}\sum_{\ell=1}^p\sum_{j=1}^{n_\ell}\(-\y\ell j+c\x\ell j+\b\ell1-
          \sum_{k\geqs1}k\s\ell k(\y\ell j)^{k-1}+2\b\ell2\y\ell j\right.\no\\
  &&\qquad\qquad  %\left.
   \left.+\sum_\rq q c_{rq}(\x\ell j)^r(\y\ell j)^{q-1}\right) \tilde I_n(x,y)%\right|_\cons
   \,dx\,dy\no\\
  &=&\iint\limits_{E_1^N\times E_2^N}\(\sum_{\ell=1}^p\pps\ell1+c\pps01+\sum_{\ell=1}^pn_\ell(\b\ell1-\s\ell1)
         +\sum_{\ell=1}^p\sum_{k\geqs2}k\s\ell k\pps\ell{k-1}%@@-2\sum_{\ell=1}^p\b\ell2\pps\ell1
          \right.\no\\
  &&\qquad\qquad%\left.
 \left.   -2\sum_{\ell=1}^p\b\ell2\pps\ell1 +\sum_{r\geqs2}c_{r1}\pps0r+\!\!\!\!\sum_{\renewcommand{\arraystretch}{1}
        \tiny\begin{array}{c}\rq\\q\geqs2\end{array}} 
         q c_{rq}\ppc{r,q-1}\right) \tilde  I_n(x,y)\,dx\,dy
         \no\\
  &=&\(\sum_{\ell=1}^p\pps\ell1+c\pps01+\sum_{\ell=1}^pn_\ell(\b\ell1-\s\ell1)
         +\sum_{\ell=1}^p\sum_{k\geqs2}k\s\ell k\pps\ell{k-1}+2\sum_{\ell=1}^{p-1}\b\ell2\ppb\ell1\right.\no\\
  &&\qquad\qquad  \left.+\sum_{r\geqs2}c_{r1}\pps0r+\sum_{\renewcommand{\arraystretch}{1}
        \tiny\begin{array}{c}\rq\\q\geqs2\end{array}} q c_{rq}\ppc{r,q-1}\right) \tau_n,\no
\eea}%
where we have used the identity (\ref{time-identities2}), which follows from the constraint
$\cons$, in the last step. The computation for $\pl_{E_1}$ is similar, but
simpler:
{\footnotesize
\bean
  \lefteqn{\pl_{E_1}\tau_n}\\
  &=&\iint\limits_{E_1^N\times E_2^N}\sum_{i=1}^N\pp{\tilde I_n}{x_i}(x,y)\,dx\,dy\no\\
  &=&\iint\limits_{E_1^N\times E_2^N}\sum_{\ell=1}^p\sum_{j=1}^{n_\ell}\(-\x\ell j+c\y\ell j+
     \sum_{k\geqs1}k\s0 k(\x\ell j)^{k-1}+\sum_\rq rc_{rq}(\x\ell j)^{r-1}(\y\ell j)^{q}\)\tilde I_n(x,y)\,dx\,dy\no\\
  &=&\(-\pps01-c\sum_{\ell=1}^p\pps\ell1+N\s01+\sum_{k\geqs2}k\s0k\pps0{k-1}-\sum_{\ell=1}^p\sum_{q\geqs2}c_{1q}\pps\ell q\right.%\\
  %&&\qquad 
  \left.+\sum_{\renewcommand{\arraystretch}{1}
        \tiny\begin{array}{c}\rq\\r\geqs2\end{array}} r c_{rq}\ppc{r-1,q}\)\tau_n.\no
\eean}%
For the computation of the action of $\vr_{_{E_1}}$ and $\vr_{_{E_2}}$ on the tau function, note 
\begin{equation*}
  \sum_{i=1}^N\pp{}{x_i}(x_if)=Nf+  \sum_{i=1}^Nx_i\pp{f}{x_i},\qquad
  \sum_{i=1}^Nx_i\pp{}{x_i}\Delta_N(x)=\frac{N(N-1)}2\Delta_N(x),
\end{equation*}%
and so from (\ref{eq:3.8'}), compute using (\ref{time-identities2}) and the constraints $\cons$,
{\footnotesize\bea
  %\lefteqn{\D0(E_2)\tau_n}\\
  \vr_{E_2}\tau_n&=&\iint\limits_{E_1^N\times E_2^N}%\left.
  \sum_{i=1}^N\pp{}{y_i}(y_i  \tilde I_n(x,y))%\right|_\cons
   \,dx\,dy\no\\
  &=&\iint\limits_{E_1^N\times E_2^N}\left(N+\sum_{\ell=1}^p\frac{n_\ell(n_\ell-1)}2+\right.\no\\
  &&\qquad\sum_{\ell=1}^p\sum_{j=1}^{n_\ell}\({-\y\ell j}^2+c\x\ell j\y\ell j+\b\ell1\y\ell j-
          \sum_{k\geqs1}k\s\ell k(\y\ell j)^{k}+2{\b\ell2\y\ell j}^2\right.\no\\
  &&\qquad\qquad % \left.
   \left.\left.+\sum_\rq q c_{rq}(\x\ell j)^r(\y\ell j)^q\right)\right) \tilde I_n(x,y)%\right|_\cons
   \,dx\,dy\no\\
  &=&\iint\limits_{E_1^N\times E_2^N}\(\sum_{\ell=1}^p\frac{n_\ell(n_\ell+1)}2+\sum_{\ell=1}^p\pps\ell2+c\pp{}c-
      \sum_{\ell=1}^p\b\ell1\pps\ell1+\sum_{\ell=1}^p\sum_{k\geqs1}k\s\ell k\pps\ell{k}\right.\no\\
  &&\qquad\qquad  %\left.
   \left.-2\sum_{\ell=1}^p\b\ell2\pps\ell2+\sum_\rq q c_{rq}
          \ppc{rq}\right) \tilde I_n(x,y)%\right|_\cons
           \,dx\,dy\no\\
 \eea \bea &=&\(\sum_{\ell=1}^p\frac{n_\ell(n_\ell+1)}2+\sum_{\ell=1}^p\pps\ell2+c\pp{}c+
      \sum_{\ell=1}^{p-1}\b\ell1\ppb\ell1+\sum_{\ell=1}^p\sum_{k\geqs1}k\s\ell k\pps\ell{k}\right.\no\\
  &&\qquad\qquad  \left.+2\sum_{\ell=1}^{p-1}\b\ell2\ppb\ell2+\sum_\rq q c_{rq}\ppc{rq}\right)\tau_n.\no
\eea}
Similarly,
\bea
  %\lefteqn{\vr_{E_1}\tau_n}\\
 \vr_{E_1}\tau_n &=&\iint\limits_{E_1^N\times E_2^N}\sum_{i=1}^N\pp{}{x_i}(x_i  \tilde I_n(x,y))\,dx\,dy\no\\
  &=&\(\frac{N(N+1)}2-\pps02+c\pp{}c+\sum_{k\geqs1}k\s0 k\pps0{k}+\sum_\rq r c_{rq}\ppc{rq}\)\tau_n.\no
\eea
In order to deduce (\ref{A,B ident}) from these formulas it suffices, for $h=1$, to substitute in the first line
the definitions (\ref{for:B_defs_1}), (\ref{for:B_defs_2}) for $\B\ell1$ and in the second line the expressions for
$\pl_{E_1}\tau_n$ and $\pl_{E_2}\tau_n$ in\footnote{Recall that $m=2$ and that $\kappa_0=-1$.}
\bea
  \begin{pmatrix}
    \B01\\ \B\ell1
  \end{pmatrix}
  \tau_n
  &=&\left\{-
  \begin{pmatrix}
    \kappa_0&0\\0&\kappa_\ell
  \end{pmatrix}
  \J{}{}
  \begin{pmatrix}
    \pl_{E_1}\\ \pl_{E_2}-2\sum_{i=1}^{p-1}\b i2\ppb i1
  \end{pmatrix}
  +
  \begin{pmatrix}
    0\\ \p\ell{b_1}
  \end{pmatrix}\right\}\tau_n
  \no\\
  &=&
  \left\{
  \begin{pmatrix}
    \pps01\\ \p\ell{b_1}+\kappa_\ell\sum\limits_{i=1}^p\pps i1
  \end{pmatrix}
  -
  \begin{pmatrix}
    \kappa_0&0\\0&\kappa_\ell
  \end{pmatrix}
\J{}{}
  \begin{pmatrix}
    N\s01\\
    \sum\limits_{i=1}^pn_i(\b i1-\s i1)
  \end{pmatrix}
  +
  \begin{pmatrix}
    \kappa_0\Sg01\\
    \kappa_\ell\Sg\ell1
  \end{pmatrix}
  \right\}\tau_n
  \no\\
  &=&
  \left\{
  \begin{pmatrix}
    \pps01\\ \pps\ell1
  \end{pmatrix}
  +
  \begin{pmatrix}
    \kappa_0(\Tg01+\Sg01)\\    \kappa_\ell(\Tg\ell1+\Sg\ell1)
  \end{pmatrix}
  \right\}\tau_n,
  \no
\eea
where we used (\ref{time-identities4}) (for $k=1$) in the third line, and where we set 
\be
  \begin{pmatrix}
    \Sg01\\
    \Sg\ell1
  \end{pmatrix}
  :=-\J{}{}
    \begin{pmatrix}
      \sum_{k\geqs2}k\s0k\pps0{k-1}-\sum_{i=1}^p\sum_{q\geqs2}c_{1q}\pps\ell q+\sum_{\renewcommand{\arraystretch}{1}
        \tiny\begin{array}{c}\rq\\r\geqs2\end{array}} r c_{rq}\ppc{r-1,q}\\
      \sum_{\ell=1}^p\sum_{k\geqs2}k\s\ell k\pps\ell{k-1}+\sum_{r\geqs2}c_{r1}\pps0r+\sum_{\renewcommand{\arraystretch}{1}
        \tiny\begin{array}{c}\rq\\q\geqs2\end{array}} q c_{rq}\ppc{r,q-1}
    \end{pmatrix}.\label{eq:S0ell}
\ee
Thus we see that $\Sg01$ and $\Sg\ell1$ are homogeneous first order differential operators in the deformation
parameters, and that they are independent of $\s11,\dots,\s p1$, and of $\b12,\dots,\b p2$, leading to the stated
properties of $\Sg01$ and $\Sg\ell1$. For $k=2$, it suffices to substitute the found expressions for $\vr_{E_1}$ and
$\vr_{E_2}$, acting on $\tau_n$, in the definitions (\ref{for:B_defs_3}) and (\ref{for:B_defs_4}) of $\B02$ and
$\B\ell2$, to wit:
\bea
  \B02\tau_n&=&\(-\vr_{E_1}+c\pp{}c\)\tau_n\no\\
      &=&\(\pps02-\frac{N(N+1)}2-\sum_{k\geqs1}k\s0 k\pps0{k}-\sum_\rq r c_{rq}\ppc{rq}\)\tau_n,\no\\
      &=:&\(\pps02+\kappa_0\frac{N(N+1)}2+\kappa_0\Sg02\)\tau_n,\no
\eea
and 
\bea
  \B\ell2\tau_n
  &=&\(\p\ell{b_2}+\kappa_\ell\(\vr_{E_2}-c\pp{}c-\sum_{i=1}^{p-1}\b i1\ppb i1-2\sum_{i=1}^{p-1}\b i2\ppb i2\)\)\tau_n\no\\
  &=&\(\p\ell{b_2}+\kappa_\ell\sum_{i=1}^p\pps i2+\right.\\
  &&%\qquad
   \left.\kappa_\ell\(\sum_{i=1}^p\frac{n_i(n_i+1)}2+
      \sum_{i=1}^p\sum_{k\geqs1}k\s ik\pps ik+\sum_{\rq}qc_{rq}\pp{}{c_{rq}}\)\)\tau_n\no\\
  &=:&\(\pps\ell2+\kappa_\ell\sum_{i=1}^p\frac{n_i(n_i+1)}2+\kappa_\ell\Sg\ell2\)\tau_n,\no
\eea
where\footnote{Notice that $\Sg\ell2$ is independent of $\ell$ for $1\leqs\ell\leqs p$.}
\bea\label{eq:sigval}
  \Sg02&=&\sum_{k\geqs1}k\s0 k\pps0{k}+\sum_\rq r c_{rq}\ppc{rq},\no\\
  \Sg\ell2&=& \sum_{i=1}^p\sum_{k\geqs1}k\s ik\pps ik+\sum_{\rq}qc_{rq}\pp{}{c_{rq}}.
\eea
\end{proof}

\begin{remark}\label{rem:lemma_m_big}%\end{remark}
{\rm For $m>2$ the proof goes along the same line, but it has extra terms, coming from the deformation parameters
$\gamma^{(k)}_r$. As it turns out, 
\be
  \pp{}{\gamma^{(k)}_1}\tau_n=\sum_{i=1}^m\J
  ki\(\pl_{E_i}-\delta_{i,m}\(2\sum_{\ell=1}^{p-1}\b\ell2\ppb\ell1+\sum_{\ell=1}^p n_\ell\b\ell1\)\)\tau_n+O(\LR),
\ee
while $\pps i1\tau_n$ are as before, $\mod O(\LR)$, so the $\pp{}{\gamma^{(k)}_1}\tau_n$ are only needed to solve for
$\pps i1\tau_n$ in terms of the $(\pl_{E_i}-\delta_{i,m}(\star))\tau_n$, but they do not enter into the actual
solution of $\pps i1\tau_n\mod O(\LR)$.}
\end{remark}

\begin{remark}\label{rem:lemma_m=1}
{\rm For $m=1$ (one time) the proof of Proposition \ref{lemma 2.1} is simpler, but a few adjustments are needed. Denoting
the subset $E_1\subset\BR$ by $E$, setting $\kappa_0:=-1$ and $\p0{b_1}:=\p0{b_2}:=0$, the operators $\B\ell1$ and
$\B\ell2$ can for $\ell=0,\dots,p$, be written as
\bea 
  \B\ell1&=&\p\ell{b_1}+\kappa_\ell\(\pl_E-2\sum_{i=1}^{p-1}\b i2\ppb i1\),\no\\
  \B\ell2&=&\p\ell{b_2}+\kappa_\ell\(\vr_E-\sum_{i=1}^{p-1}\b i1\ppb i1-2\sum_{i=1}^{p-1}\b i2\ppb i2\),\label{eq:B_m=1}%\\
\eea 
while $T_k:=T^{(\a)}_k$ and $\Sigma_k:=\Sg\alpha k$ are independent of $\a$ and take the simple form
\bea 
  T_1&=&N\s01+\sum_{\ell=1}^p n_\ell(\b\ell1-\s\ell1),\qquad  T_2=\sum_{1\leqs i\leqs j\leqs  p}n_in_j,\label{eq:Tm=11}\\
  \Sigma_1&=&\sum_{\ell=0}^p\sum_{k\geqs2}k\s\ell k\pps\ell{k-1},\qquad
  \Sigma_2=\sum_{\ell=0}^p\sum_{k\geqs1}k\s\ell k\pps\ell{k}.
\eea }
\end{remark}

\section{Virasoro constraints, restricted to the locus $\LR$}
Restricting the operators $\BB_i$, $T_i$ and $\Sigma_i$ ((\ref{for:B_defs_1}) -- (\ref{eq:Ta2})) to the locus
$\LR$, defined by setting all deformation parameters equal to zero (see (\ref{eq:LR})), yields new operators for
$\ell=0,\dots,p$,
\bea
 \hB\ell1 &:=& \p\ell{b_1}-\kappa_\ell \sum_{i=1}^m \pl_{E_i}\times \left\{\begin{array}
 {l}{\cal J}_{1i}\mbox{~~~for }\ell=0\\ {\cal J}_{mi}\mbox{~~for }1\leqs \ell\leqs p\end{array}\right.\no\\
%  \hB01&:=&\sum_{i=1}^m \J 1i\D\mi(E_i)\no\\
 % \hB\ell1&:=&\p\ell{b_1}-\kappa_\ell\sum_{i=1}^m\J mi\D\mi(E_i)\no\\
  \hB02&:=&-\vr_{E_1}+c_1\ppc1+\delta_{1,m}\sum_{i=1}^{p-1}\b i1\ppb i1\no\\
  \hB\ell2&:=&\!\p\ell{b_2}-\kappa_\ell\left(\!-\vr_{E_m}+c_{m-1}\ppc{m-1}+\sum_{i=1}^{p-1}\b i1\ppb i1\!\right),~\mbox{for}~~\ell\geqs 1. %\\
  %\label{eq:hBs}
 \label{eq:hBs}\eea
while all $\Sg\ell{k}$, defined in (\ref{eq:S0ell}) and (\ref{eq:sigval}), restrict to zero, $\hTg\ell2=\Tg\ell2$
for $0\leqs \ell\leqs p$ and
\begin{equation}\label{hT_eqs}
  \hTg01=-\J1m N(b_1),\quad  \hTg\ell1=-\J mm N(b_1),\ \hbox{for }\ 1\leqs \ell\leqs p, \quad
\end{equation}%
where $N(b_1):=\sum_{\ell=1}^pn_\ell\b\ell1$. It leads, on the locus $\LR$, to the identities:

\begin{proposition}\label{lemma 2.2}
 For $\ell=0\dots,p$ and $h=1,2$, the following formulas hold on the locus~$\LR$:
\bea
  \frac{\pl}{\pl s_h^{(\ell)}}\ln \tau_n&=&\hB\ell h\ln\tau_n-\kappa_\ell \hTg\ell h,
%  \frac{\pl}{\pl s_{1\atop 2}^{(\ell)}}\ln \tau_n&=&\hB\ell{1\atop 2}\ln\tau_n-\kappa_\ell \hTg\ell{1\atop 2},
        \label{1st der}%\\
\eea
while for second derivatives and $\ell=1,\dots,p,$ also on the locus~$\LR$,
\bea
  \frac{\pl^2}{\pl\s01\pl\s\ell1}\ln\tau_n&=&\hB01\hB\ell1\ln\tau_n+n_\ell\J 1m=:-F_\ell,\no\\
  \frac{\pl^2}{\pl\s02\pl\s\ell1}\ln\tau_n&=&(\hB02+\delta_{1,m})\hB\ell1\ln\tau_n-2 \J1m^2\kappa_\ell N(b_1), \label{2nd der} \\  
%  \frac{\pl^2}{\pl\s02\pl\s\ell1}\ln\tau_n&=&(\hB02+1)\hB\ell1\ln\tau_n-2c_\ell\hat T_1, \label{2nd der} \\
  \frac{\pl^2}{\pl\s01\pl\s\ell2}\ln\tau_n&=&(\hB\ell2-\kappa_\ell\delta_{1,m})\hB01\ln\tau_n-2\J 1m(\J mm\kappa_\ell N(b_1)+\p\ell{b_1}\ln\tau_n).\no
\eea
\end{proposition}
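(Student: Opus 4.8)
\textbf{Proof proposal for Proposition \ref{lemma 2.2}.}

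The plan is to derive both the first- and second-derivative formulas on the locus $\LR$ directly from the operator identity (\ref{A,B ident}) of Proposition \ref{lemma 2.1}, combined with the $(p+1)$-component KP relations (\ref{C008}) of Proposition \ref{lemma 2.3}. For the first-derivative formula (\ref{1st der}), I would simply restrict (\ref{A,B ident}) to $\LR$. Since each $\Sg\ell h$ is a homogeneous first-order operator in the deformation parameters, it annihilates anything when evaluated at $\LR$ (the stated property $\Sg\a k|_{\LR}=0$), so applying (\ref{A,B ident}) to $\ln\tau_n$ and restricting gives $\hB\ell h\ln\tau_n=\pps\ell h\ln\tau_n+\kappa_\ell\hTg\ell h$, which rearranges to (\ref{1st der}). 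The only care needed is to confirm that the restricted operators $\hB\ell h$ and constants $\hTg\ell h$ match the definitions in (\ref{eq:hBs}) and (\ref{hT_eqs}); this is just a matter of setting $\b\ell2=0$ and dropping the $\Sg$-terms in (\ref{for:B_defs_1})--(\ref{for:B_defs_4}) and in (\ref{eq:Ta1})--(\ref{eq:Ta2}).

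For the second-derivative formulas, the strategy is to differentiate the first-derivative identity (\ref{1st der}) once more with respect to a KP time, and then restrict to $\LR$ \emph{after} carrying out the differentiation (since $\LR$ fixes the deformation parameters, one cannot restrict first). Concretely, to obtain $\frac{\pl^2}{\pl\s01\pl\s\ell1}\ln\tau_n$ I would apply $\pps01$ to the full identity $\pps\ell1\ln\tau_n=\hB\ell1\ln\tau_n-\kappa_\ell\Tg\ell1-\kappa_\ell\Sg\ell1\ln\tau_n$ (the unrestricted version), and similarly for the mixed $(0,2)$, $(\ell,1)$ and $(0,1)$, $(\ell,2)$ combinations. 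The operators $\hB\ell h$ commute with $\pps01$ up to the commutators recorded in (\ref{for_sig_comm}); those commutators $[\pps\ell1,\Sg0h]$ and $[\pps01,\Sg\ell2]$ are precisely what produce the explicit $\delta_{1,m}$ correction terms appearing in (\ref{2nd der}) (e.g. the $+\delta_{1,m}$ shift on $\hB02$ and the $-\kappa_\ell\delta_{1,m}$ shift on $\hB\ell2$). The constant terms $-2\J1m^2\kappa_\ell N(b_1)$ and $-2\J1m(\J mm\kappa_\ell N(b_1)+\p\ell{b_1}\ln\tau_n)$ will emerge from differentiating the $\Tg\ell h$ with respect to the KP times: recall $\Tg\a1$ depends linearly on $\s01$ and the $\s\ell1$ through (\ref{eq:Ta1}), so a single KP time derivative of $\Tg\ell1$ contributes a nonzero constant (via $N(b_1)=\sum n_\ell\b\ell1$ and the Jacobi-matrix entries $\J1m$, $\J mm$). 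The term $\p\ell{b_1}\ln\tau_n$ in the last line comes from the first-derivative identity (\ref{1st der}) applied to $\hB01$, fed back through the structure of $\hB\ell2$.

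The main obstacle I anticipate is bookkeeping the interplay between the $\Sg$-operators and the $\Tg$-constants when differentiating: one must track which terms survive restriction to $\LR$ and which commutator contributions (governed by (\ref{for_sig_comm})) generate the $\delta_{1,m}$ shifts, while simultaneously extracting the correct constant from $\partial\Tg\ell1/\partial s$. A clean way to organize this is to write each second derivative as $\pps{a}{h}(\hB{b}{k}\ln\tau_n) = \hB{b}{k}\pps{a}{h}\ln\tau_n + [\pps{a}{h},\hB{b}{k}]\ln\tau_n$, substitute the first-derivative formula (\ref{1st der}) for the inner $\pps{a}{h}\ln\tau_n$, and then evaluate the commutator using (\ref{for_sig_comm}); the remaining constants follow from (\ref{eq:Ta1}) and (\ref{hT_eqs}). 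I would verify the result explicitly in the $m=2$ case (where $\J{}{}$ is the $2\times 2$ matrix computed in the proof of Proposition \ref{lemma 2.1}) as a consistency check, and note that the $m=1$ and $m>2$ cases follow by the same argument using Remarks \ref{rem:lemma_m=1} and \ref{rem:lemma_m_big}. The definitions $-F_\ell$ for the first second-derivative is just notation anticipating its later role in the PDE (\ref{PDE}).
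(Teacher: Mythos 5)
Your strategy is in essence the paper's own, run in the opposite direction: the paper starts from the products $\hB{}{}\hB{}{}\ln\tau_n$ restricted to $\LR$ and unpacks them into time derivatives via Proposition \ref{lemma 2.1}, the commutators (\ref{for_sig_comm}), the vanishing of the $\Sg\ell h$ on $\LR$, and the action of the operators on the constants $\Tg\ell h$, whereas you start from the mixed time derivatives and work back. For (\ref{1st der}) and for the first two identities of (\ref{2nd der}) your plan goes through, with minor extra bookkeeping: the ordering you chose needs commutators such as $[\pps01,\Sg\ell1]$, which are not among those recorded in (\ref{for_sig_comm}) and must be checked to vanish, and the coefficients $2\J1m^2$ and $2\J1m\J mm$ in the constants require the Jacobi-matrix derivative formulas (\ref{eq:J_ders}), not only (\ref{eq:Ta1}) and (\ref{hT_eqs}).

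The genuine gap is in the third identity of (\ref{2nd der}), at the term $2\J1m\p\ell{b_1}\ln\tau_n$. Your organizing rule --- commute the time derivative past the Virasoro operator, substitute the \emph{restricted} formula (\ref{1st der}) for the inner time derivative, and generate all corrections from (\ref{for_sig_comm}) --- cannot produce this term, and your stated source for it (``(\ref{1st der}) applied to $\hB01$, fed back through the structure of $\hB\ell2$'') is not the actual mechanism. The point is that $\hB\ell2$ (and $\B\ell2$) contains $\p\ell{b_2}$, a derivative \emph{transverse} to $\LR$; inside such an operator one may not insert an identity valid only on $\LR$, and (\ref{1st der}) involves the hatted $\hB01$, which carries no trace of the $\b i2$'s. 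The term in fact arises from the $\b i2$-dependent piece $-2\J1m\sum_{i=1}^{p-1}\b i2\ppb i1$ of the \emph{unrestricted} operator $\B01$ of (\ref{for:B_defs_1}): by (\ref{def:pbk}),
\begin{equation*}
  \left[\p\ell{b_2},\,\B01\right]=-2\J1m\sum_{i=1}^{p-1}\p\ell{b_2}(\b i2)\,\ppb i1=-2\J1m\p\ell{b_1},
\end{equation*}
which is exactly how the paper obtains $\hB\ell2\hB01\ln\tau_n\big|_{\LR}=\B\ell2\B01\ln\tau_n\big|_{\LR}+2\J1m\p\ell{b_1}\ln\tau_n\big|_{\LR}$ before converting $\B\ell2\B01\ln\tau_n$ into time derivatives. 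In your order of operations the term appears only if, inside $\B\ell2(\cdot)$, you substitute the unrestricted identity $\pps01\ln\tau_n=\B01\ln\tau_n+\Sg01\ln\tau_n+\Tg01$ (from (\ref{A,B ident}), with $\kappa_0=-1$) and restrict to $\LR$ only afterwards; neither (\ref{for_sig_comm}) nor any derivative of the $T$'s supplies it. Once this step is repaired, your proof is sound and coincides with the paper's.
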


\begin{proof}
The first set of identities (\ref{1st der}) follows at once from restricting the identities (\ref{A,B ident}) of
Proposition \ref{lemma 2.1} to the locus $\LR$ and using that $\pps\ell k$ and $\B\ell k$ are first order differential
operators. The identities (\ref{2nd der}) involving second derivatives are shown as follows. Concerning the first
one, observe from Proposition \ref{lemma 2.1} that
\bean 
  \lefteqn{\hB01\hB\ell1\ln\tau_n\Big|_{\LR}=
     \B01\B\ell1\ln\tau_n\Big|_{\LR}=\B01\(\pps\ell1+\kappa_\ell\Sg\ell1\)\ln\tau_n\Big|_{\LR}+
         \kappa_\ell\B01\Tg\ell1\Big|_{\LR}}\\ \\ 
    &=&\(\pps\ell1+\kappa_\ell\Sg\ell1\)\B01\ln\tau_n\Big|_\LR
       =\pps\ell1\B01\ln\tau_n\Big|_{\LR}\\ \\
    &=&\pps\ell1\((\pps01+\kappa_0\Sg0{1})\ln\tau_n+\kappa_0\Tg01\)\Big|_{\LR}=
         \frac{\pl^2}{\pl\s\ell1\pl\s01}\ln\tau_n\Big|_{\LR}-\J1mn_{\ell}, 
\eean
where we used in the last equality the relations $\frac{\pl}{\pl\s\ell1}\Tg01=\J1mn_\ell$ (see (\ref{eq:Ta1})) and
$\left[\pps\ell1,\Sg01\right]=0,$ (see (\ref{for_sig_comm})). This yields the first identity in (\ref{2nd der}). To
prove the third one, we use that
$$
  \sum_{i=1}^{p-1}\p\ell{b_2}(\b i2)\pp{}{\b i1}=\sum_{i=1}^{p-1}(\kappa_\ell-\delta_{\ell,i})\pp{}{\b
  i1}=\p\ell{b_1},
$$
as follows from (\ref{def:pbk}), and 
\bean
  \B\ell2\Tg01\Big|_\LR&=&\kappa_\ell c_{m-1}\pp{\J1m}{c_{m-1}}N(b_1)+\kappa_\ell\J1m\sum_{i=1}^{p-1}\b i1\ppb i1N(b_1)\\
  &=&-\kappa_\ell\J1m(2\J mm+1)N(b_1)+\kappa_\ell\J1m N(b_1)=-2\kappa_\ell\J1m\J mmN(b_1),
\eean
by using (\ref{eq:J_ders}), when $m>1$, and $\B\ell2\Tg01\Big|_\LR=-\kappa_\ell N(b_1)$, by Remark
\ref{rem:lemma_m=1} for $m=1$, so that
$$
  \B\ell2\Tg01\Big|_\LR=-\kappa_\ell N(b_1)(2\J1m\J mm-\delta_{1,m}),
$$
for all $m$.  Using these identities, (\ref{for:B_defs_1}), (\ref{for:B_defs_4}), Proposition \ref{lemma 2.1},
(\ref{for_sig_comm}) and (\ref{1st der}), compute
{\footnotesize \bean 
\lefteqn{\hB\ell2\hB01\ln\tau_n\Big|_{\LR}}\\
    &=&\B\ell2\B01\ln\tau_n\Big|_{\LR}+2\J 1m\sum_{i=1}^{p-1}\p\ell{b_2}(\b i2)\pp{}{\b i1}\ln\tau_n\Big|_{\LR}\\
    &=&\B\ell2\(\pps01+\kappa_0\Sg01\)\ln\tau_n\Big|_{\LR}+\kappa_0\B\ell2\Tg01\Big|_{\LR} +
              2\J 1m\p\ell{b_1}\ln\tau_n\Big|_{\LR}\\ \\ 
    &=&\(\pps01+\kappa_0\Sg01\)\B\ell2\ln\tau_n\Big|_{\LR}+\kappa_\ell N(b_1)(2\J1m\J mm-\delta_{1,m})
              +2\J1m\p\ell{b_1}\ln\tau_n\Big|_{\LR}\\ \\
    &=&\pps01\(\!\!\!\(\pps\ell2+\kappa_\ell\Sg\ell2\)\ln\tau_n+\kappa_\ell\Tg\ell2\)\Big|_{\LR}
        +\kappa_\ell N(b_1)(2\J1m\J mm-\delta_{1,m})+2\J1m\p\ell{b_1}\ln\tau_n\Big|_{\LR}\\ \\ 
    &=&\pp{{}^2}{\s01\pl\s\ell2}\ln\tau_n\Big|_{\LR}+\kappa_\ell\left[\pps01,\Sg\ell2\right]\ln\tau_n\Big|_\LR
        +\kappa_\ell N(b_1)(2\J1m\J mm-\delta_{1,m})+2\J1m\p\ell{b_1}\ln\tau_n\Big|_{\LR}\\ \\ 
    &=&\pp{{}^2}{\s01\pl\s\ell2}\ln\tau_n\Big|_{\LR}+\delta_{1,m}\kappa_\ell
                \(\pps01\ln\tau_n\Big|_{\LR}-N(b_1)\)+2\J1m(\kappa_\ell N(b_1)\J mm +\p\ell{b_1}\ln\tau_n\Big|_{\LR})\\
    &=&\pp{{}^2}{\s01\pl\s\ell2}\ln\tau_n\Big|_{\LR}+
       \delta_{1,m}\kappa_\ell\hB01\ln\tau_n\Big|_{\LR}+2\J1m(\kappa_\ell N(b_1)\J mm +\p\ell{b_1}\ln\tau_n\Big|_{\LR}),
\eean}%
which yields the third relation (\ref{2nd der}). Using
$\left.\B02\Tg\ell1\right|_\LR=N(b_1)(2\J1m^2-\delta_{1,m})$, which follows from (\ref{for:B_defs_3}),
(\ref{eq:Ta1}) and (\ref{eq:J_ders}), the second identity in (\ref{2nd der}) is proven in a similar fashion, using
(\ref{for_sig_comm}) and (\ref{1st der}), namely
\bean 
\hB02\hB\ell1\ln\tau_n\Big|_{\LR}
    &=&\B02\B\ell1\ln\tau_n\Big|_{\LR}\\
    &=&\B02\(\pps\ell1+\kappa_\ell\Sg\ell1\)\ln\tau_n\Big|_{\LR}+\kappa_\ell\B02\Tg\ell1\Big|_{\LR}\\ \\ 
    &=&\(\pps\ell1+\kappa_\ell\Sg\ell1\)\B02\ln\tau_n\Big|_{\LR}+\kappa_\ell N(b_1)(2\J1m^2-\delta_{1,m})\\ \\ 
    &=&\pps\ell1\(\(\pps02+\kappa_0\Sg02\)\ln\tau_n+\kappa_0\Tg02\)\Big|_{\LR}+\kappa_\ell N(b_1)(2\J1m^2-\delta_{1,m})\\ \\ 
    &=&\pp{{}^2}{\s\ell1\pl\s02}\ln\tau_n\Big|_{\LR}-\left[\pps\ell1,\Sg02\right]\ln\tau_n\Big|_{\LR}
           +\kappa_\ell N(b_1)(2\J1m^2-\delta_{1,m})\\
 \eean\bean   &=&\pp{{}^2}{\s\ell1\pl\s02}\ln\tau_n\Big|_{\LR}-\delta_{1,m}\pps\ell1\ln\tau_n\Big|_{\LR}
          +\kappa_\ell N(b_1)(2\J1m^2-\delta_{1,m})\\
    &=&\pp{{}^2}{\s\ell1\pl\s02}\ln\tau_n\Big|_{\LR}+2\kappa_\ell N(b_1)\J1m^2-\delta_{1,m}\hB\ell1\ln\tau_n\Big|_{\LR}.
\eean
\end{proof}

\section{A PDE for the transition probability}\label{sec:PDE}

This section aims at proving Theorem \ref{Theo:7.2}, which leads at once to Theorem~\ref{Theo:1.1}. In order to do
so, we shall need two propositions:
\begin{proposition}\label{lemma 2.4}
  For $1\leqs\ell\leqs p$, the function $X_\ell:=\pl^{(\ell)}_{b_2}\hB01\ln\tau_n\Bigl|_{\LR}$ satisfies the
  equation
  \begin{equation}\label{A2}
    \left\{X_\ell,\,F_\ell \right\}_{\hB01}=
    \left\{H_\ell^{(1)},F_{\ell}\right\}_{\hB01}-\left\{H_{\ell}^{(2)}, F_{\ell}\right\}_{\hB\ell1},
   \end{equation}
where ($n=(n_1,\ldots,n_p)$)
 \bea
    F_{\ell}&=&-\hB01\hB\ell1\ln\BP_n-n_\ell\J 1m,\no\\
    H_{\ell}^{(1)}&:=&(\kappa_\ell(\delta_{1,m}-\vr_m)\hB01+2\J1m\p\ell{b_1})\ln\BP_n+C_\ell,\no\\
    H_{\ell}^{(2)}&:=&(\hB02+\delta_{1,m}+2\J1m\b\ell1\hB01)\hB\ell1\ln\BP_n,\no\\
    \vr_m&=&\vr_{E_m}-c_{m-1}\pp{}{c_{m-1}}-\sum_{\ell=1}^{p-1}\b\ell1\ppb\ell1,\label{E}
     \\
    C_\ell&:=&2n_\ell\J 1m\(\J mm \b\ell1-\sum_{i\neq\ell}\frac{n_i}{\b\ell1-\b i1}\).
 \label{7.4.5}  \eea%
\end{proposition}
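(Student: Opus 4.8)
The plan is to eliminate the shifted quantity $g:=\ln(\tau_{n+e_\ell}/\tau_{n-e_\ell})$ between the two bilinear KP identities of Proposition \ref{lemma 2.3}, and then to restrict the resulting relation to the locus $\LR$, where $\ln\tau_n|_\LR=\ln\BP_n$. Abbreviating the three second logarithmic derivatives as
\[
A:=\frac{\pl^2}{\pl\s02\pl\s\ell1}\ln\tau_n,\qquad B:=\frac{\pl^2}{\pl\s01\pl\s\ell1}\ln\tau_n,\qquad C:=\frac{\pl^2}{\pl\s01\pl\s\ell2}\ln\tau_n,
\]
Proposition \ref{lemma 2.3} reads $\pps01 g=A/B$ and $\pps\ell1 g=-C/B$. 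Since $g$ is a genuine function of the deformation parameters, its mixed partials coincide, $\pps\ell1\pps01 g=\pps01\pps\ell1 g$; equating the two and clearing the common denominator $B^2$ yields the master relation
\[
\{A,B\}_{\pps\ell1}+\{C,B\}_{\pps01}=0,
\]
valid for all parameter values, where $\{f,h\}_X:=(Xf)h-f(Xh)$. This is the identity I would then push down to $\LR$.

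The second step reads off the restrictions from Proposition \ref{lemma 2.2}: $B|_\LR=-F_\ell$, while $A|_\LR$ and $C|_\LR$ are the explicit $\hB$-expressions given there. Using $\hB\ell2=\p\ell{b_2}+\kappa_\ell\vr_m$ together with $F_\ell=-\hB01\hB\ell1\ln\BP_n-n_\ell\J1m$, one recognizes $\p\ell{b_2}\hB01\ln\BP_n=X_\ell$ inside $C|_\LR$, so that $C|_\LR$ already carries $X_\ell$ and the logarithmic part of $H_\ell^{(1)}$; similarly $A|_\LR$ carries the $(\hB02+\delta_{1,m})$-part of $H_\ell^{(2)}$. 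The upshot is that, up to the correction terms produced in the next step, the two brackets of the master relation should match $-\{H_\ell^{(2)},F_\ell\}_{\hB\ell1}$ and $\{H_\ell^{(1)}-X_\ell,F_\ell\}_{\hB01}$ respectively; by linearity of the bracket in its first slot this is exactly (\ref{A2}).

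The delicate point is that each bracket carries one further time derivative, so its restriction is \emph{not} simply $\{A|_\LR,B|_\LR\}_{\hB\ell1}$, etc.: the outer $\pps\ell1$ and $\pps01$ must be converted into $\hB\ell1$, $\hB01$ acting on the already-restricted quantities. For this I would invoke Proposition \ref{lemma 2.1} in the form $\B\ell h=\pps\ell h+\kappa_\ell\Sg\ell h$ (modulo the additive constants $\Tg\ell h$) on $\ln\tau_n$, use that every $\Sg\ell k$ vanishes on $\LR$ while its commutator with $\pps01$, $\pps\ell1$ is dictated by (\ref{for_sig_comm}), and use the Jacobi-matrix derivative formulas (\ref{eq:J_ders}) to evaluate the $c_{m-1}\pp{}{c_{m-1}}$-contributions hitting $\J1m$ and the constant $N(b_1)$. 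Precisely these corrections are meant to supply the missing pieces of $H_\ell^{(1)}$ and $H_\ell^{(2)}$: the term $2\J1m\b\ell1\hB01$ in $H_\ell^{(2)}$, the full $\vr_m$ and the rational constant $C_\ell$ in $H_\ell^{(1)}$, and every $\delta_{1,m}$ correction, the last traceable directly to the nonzero commutators in (\ref{for_sig_comm}).

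I expect this final conversion to be the main obstacle: verifying that the correction terms generated when the surviving time derivatives are pushed onto the already-differentiated $A$, $B$, $C$ assemble, with nothing left over, into the prescribed $H_\ell^{(1)}$, $H_\ell^{(2)}$ and the rational constant $C_\ell$. The elimination and the bracket algebra of the first two steps are routine; the weight of the proof lies in this bookkeeping, where the relations (\ref{for_sig_comm}) and (\ref{eq:J_ders}) do the structural work.
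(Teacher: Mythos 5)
Your skeleton — the two KP relations of Proposition \ref{lemma 2.3}, the Virasoro restrictions, elimination of $g:=\ln(\tau_{n+e_\ell}/\tau_{n-e_\ell})$ — matches the paper's, and your mixed-partials identity $\{A,B\}_{\pps\ell1}+\{C,B\}_{\pps01}=0$ is correct as an identity in the full deformation space. But your order of operations creates exactly the difficulty the paper's proof is engineered to avoid. You differentiate in the time directions \emph{first} and restrict to $\LR$ \emph{afterwards}, so your brackets contain third-order time derivatives of $\ln\tau_n$ (namely $\pps\ell1 A$, $\pps\ell1 B$, $\pps01 C$, $\pps01 B$), and these are transverse to the locus: Proposition \ref{lemma 2.2} supplies no formula for them. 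Converting them, as you propose, via Proposition \ref{lemma 2.1} forces you to restrict quantities like $(\pps\ell1)^2\ln\tau_n$ (also not in Proposition \ref{lemma 2.2}) and to use commutators such as $[\pps02,\Sg\ell1]$, which is \emph{nonzero} and is not among the relations (\ref{for_sig_comm}); it is computable from (\ref{eq:S0ell}), but you would have to derive a whole new layer of such identities and verify that the corrections assemble. The paper needs none of this: it first converts $\pps01 g$ and $\pps\ell1 g$ into $\hB01 g-2\J1m\b\ell1$ and $\hB\ell1 g+2\kappa_\ell\J mm\b\ell1$ using (\ref{1st der}) and (\ref{hT_eqs}), restricts the two equations to $\LR$, and only \emph{then} eliminates $g$ by cross-applying $\hB\ell1$ and $\hB01$ — operators which commute and are tangent to $\LR$ — so that no transverse derivative beyond those already catalogued in Proposition \ref{lemma 2.2} ever appears. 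What you dismiss as bookkeeping is precisely the point where your plan outruns the available lemmas, while the paper's ordering makes the obstacle vanish.

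The more decisive gap is in your very first sentence: the premise ``$\ln\tau_n|_{\LR}=\ln\BP_n$'' is false. The restricted tau function is the \emph{unnormalized} integral; by (\ref{for:new}) one has $\ln\tau_n(\BE)|_{\LR}=\ln\BP_n(\BE)+\ln\tau_n(\BR^m)|_{\LR}$, and the normalization $\ln\tau_n(\BR^m)|_{\LR}=-\frac{\J mm}{2}\sum_\ell n_\ell(\b\ell1)^2+\sum_{i<j}n_in_j\ln(\b j1-\b i1)+\ln g(c)$ is a nontrivial function of the $\b\ell1$ and $c$, computed in closed form in the Appendix (formula (\ref{eq:int_over_R})). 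This term is the sole source of the constant $C_\ell$: its rational part $\sum_{i\neq\ell}n_i/(\b\ell1-\b i1)$ arises from $\p\ell{b_1}$ acting on the log-Vandermonde factor, and the $\J mm\b\ell1$ part from the Gaussian factor. Your proposal attributes $C_\ell$ instead to the commutators (\ref{for_sig_comm}) and the Jacobi-matrix identities (\ref{eq:J_ders}); that cannot work, since those have polynomial coefficient structure and can never generate poles in $\b\ell1-\b i1$. Without invoking the full-range evaluation of $\tau_n(\BR^m)$ and the substitution of (\ref{for:new}) into the eliminated equation — the entire last portion of the paper's proof, from (\ref{A40}) to (\ref{A41}) — your argument terminates in an identity for $\ln\tau_n|_{\LR}$, not in the stated equation (\ref{A2}) for $\ln\BP_n$.
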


\begin{proof}
From (\ref{1st der}) and (\ref{hT_eqs}), one finds, along $\LR$, for $\ell=1,\dots,p$,
\bea 
  \pps01\ln\frac{\tau_{n+e_{\ell}}}{\tau_{n-e_{\ell}}}
   &=&\hB01\ln\frac{\tau_{n+e_\ell}}{\tau_{n-e_\ell}}-2\J 1m\b\ell1,\label{eq:syn1}\\
  \pps\ell1\ln\frac{\tau_{n+e_{\ell}}}{\tau_{n-e_{\ell}}}
   &=&\hB\ell1\ln\frac{\tau_{n+e_\ell}}{\tau_{n-e_\ell}}+2\kappa_\ell\J mm\b\ell1.\label{eq:syn2}
\eea 
A direct substitution of these formulas, as well as the formulas (\ref{1st der}) and~(\ref{2nd der}), in
(\ref{C008}), leads, along $\LR$, for $\ell=1,\dots,p$, to
\bean
 \hB01\ln\frac{\tau_{n+e_\ell}}{\tau_{n-e_\ell}}-2\J 1m\b\ell1&=&
       -\frac1{F_\ell}\((\hB02+\delta_{1,m})\hB\ell1\ln\tau_n-2 \J1m^2\kappa_\ell N(b_1)\),\\
 \hB\ell1\ln\frac{\tau_{n+e_\ell}}{\tau_{n-e_\ell}}+2\kappa_\ell\J mm\b\ell1&=&
        \frac1{F_\ell}\((\hB\ell2-\kappa_\ell\delta_{1,m})\hB01\ln\tau_n\right.\\
        &&\left.-2\J 1m(\J mm\kappa_\ell N(b_1)+\p\ell{b_1}\ln\tau_n)\),
\eean
where $F_\ell:=-\hB01\hB\ell1\ln\tau_n\Big|_\LR-n_\ell\J 1m$ (see (\ref{2nd der})).  Eliminating from these
equations the term which contains $\frac{\tau_{n+e_\ell}}{\tau_{n-e_\ell}}$, which can be done by applying
$\hB\ell1$ to the first equation and $\hB01$ to the second equation, and using that these operators commute, we get
the single equation
\bean
  \lefteqn{\hB\ell1\(\frac{2\J 1m\b\ell1F_\ell -(\hB02+\delta_{1,m})\hB\ell1\ln\tau_n\Big|_\LR
                +2\J1m^2\kappa_\ell N(b_1)}{F_\ell}\)}\\
  &=&\hB01\(\frac{(\hB\ell2-\kappa_\ell\delta_{1,m})\hB01\ln\tau_n\Big|_\LR
        -2\J 1m(\J mm\kappa_\ell N(b_1)+\p\ell{b_1}\ln\tau_n\Big|_\LR)}{F_\ell}\).
\eean
Using the fact that the derivative of a ratio amounts to a Wronskian, by clearing the denominator, and writing
$\hB\ell2$ as $\hB\ell2=\p\ell{b_2}+\kappa_\ell\,\vr_m$ (see (\ref{E}) and (\ref{eq:hBs})) and using the formula for $F_\ell$, one can rewrite the latter
equation as
\bea\label{A40} \lefteqn {-\left\{\pl^{(\ell)}_{b_2}\hB01\ln\tau_n\Big|_\LR\,,\,F_\ell \right\}_{\hB01}} \\
  &=&\left\{(\hB02+\delta_{1,m}+2\J1m\b\ell1\hB01)\hB\ell1\ln\tau_n\Big|_\LR +2\J1m^2(n_\ell\b\ell1-\kappa_\ell
  N(b_1)),\,F_\ell\right\}_{\hB\ell1}\no\\
  &&+\left\{(\kappa_\ell(\vr_m-\delta_{1,m})\hB01-2\J1m\p\ell{b_1})\ln\tau_n\Big|_\LR -2\kappa_\ell\J 1m\J
  mmN(b_1),F_\ell\right\}_{\hB01}.\no \eea
Finally the integral $\tau_n$ (as in (\ref{def:tau_n})), but integrated over the full range $\BR$, equals (see the
Appendix)
\be\label{eq:int_over_R}
  \tau_n(\BR^m)\Bigl|_{\LR}=g_n(c)\,e^{-\frac{\J mm}2\sum^p_{\ell=1}n_\ell{\b\ell1}^2}
        \prod_{1\leqs i<j\leqs p}(\b j1-\b i1)^{n_in_j},
\ee
with $g_n(c)$ a function, depending on $c_1,\dots,c_{m-1}$ and $n$ only. Thus one has, restricted to $\LR$,
\bea\label{for:new}
  \ln\tau_n(\BE)\Bigl|_{\LR}&=&\ln\BP_n(\BE)+\ln\tau_n(\BR^m)\Bigl|_{\LR},\\
  \ln\tau_n(\BR^m)\Bigl|_{\LR}&=&-\frac{\J mm}2\sum^p_{\ell=1}n_\ell(\b\ell1)^2
                +\sum_{1\leqs i<j\leqs p}n_in_j\ln(\b j1-\b i1)+\ln g(c).\no
\eea
When (\ref{for:new}) is substituted in (\ref{A40}), a few terms will appear where $\ln\tau_n(\BR^m)$ is acted upon
by a differential operator. We derive the formulas which will be used. First, it is clear that
$\hB01\tau_n(\BR^m)=0$. Therefore, since $\left[\hB01,\hB\ell1\right]=0$,
\be
  F_{\ell}=-\hB01\hB\ell1\ln\tau_n(\BE)\Bigl|_{\LR}-n_\ell\J 1m=-\hB01\hB\ell1\ln\BP_n(\BE)-n_\ell\J 1m.
\ee
Also, using $\pl^{(\ell)}_{b_1}\b i1=\kappa_{\ell}-\dt_{\ell,i}$, valid for $i=1,\dots,p$, one computes
$$
  \pl^{(\ell)}_{b_1}\ln \tau_n(\BR^m)\Big|_{\LR}= 
  \J mm(n_\ell\b\ell1-\kappa_\ell N(b_1))-n_{\ell}\sum_{i\neq\ell}\frac{n_i}{\b\ell1-\b i1}
$$
and therefore, since $\hB01\ln\tau_n(\BR^m)=0$, and by (\ref{eq:hBs}) and (\ref{eq:J_ders})
\bean
 \lefteqn{(\hB02+\delta_{1,m}+2\J1m\b\ell1\hB01)\hB\ell1\ln\tau_n(\BR^m)\Bigl|_{\LR}}\\
  &=&  \(c_1\pp{}{c_1}+\delta_{1,m}+\delta_{1,m}\sum_{i=1}^{p-1}\b i1\ppb i1\)\p\ell{b_1}\ln\tau_n(\BR^m)\Bigl|_{\LR}\\
%  &=&c_1\pp{}{c_1}\,\J mm(n_\ell\b\ell1-\kappa_\ell N(b_1))
  &=&2\J 1m^2(\kappa_\ell N(b_1)-n_\ell\b\ell1)
\eean
and
\bean
  \lefteqn{(\kappa_\ell(\vr_m-\delta_{1,m})\hB01-2\J1m\p\ell{b_1})\ln\tau_n(\BR^m)\Bigl|_{\LR}
    =-2\J1m\p\ell{b_1}\ln\tau_n(\BR^m)\Bigl|_{\LR}}\\
    &&=-2\J 1m\(\J mm(n_\ell\b\ell1-\kappa_\ell N(b_1))-n_{\ell}\sum_{i\neq\ell}\frac{n_i}{\b\ell1-\b i1}\).
\eean
Substituted in (\ref{A40}), yields the identity 
\bea %\label{A41}
  \lefteqn {-\left\{\pl^{(\ell)}_{b_2}\hB01\ln\tau_n\Bigl|_{\LR}\,,\,F_\ell \right\}_{\hB01}} \no\\ 
% -\left\{\pl^{(\ell)}_{b_2}\hB01\ln\tau_n\Bigl|_{\LR}\,,\,F_\ell \right\}_{\hB01} 
  &=&\left\{(\hB02+\delta_{1,m}+2\J1m\b\ell1\hB01)\hB\ell1\ln\BP_n,\,F_\ell\right\}_{\hB\ell1}\no\\
   &&+\left\{(\kappa_\ell(\vr_m-\delta_{1,m})\hB01-2\J1m\p\ell{b_1})\ln\BP_n-C_\ell,F_\ell\right\}_{\hB01}.  
 \label{A41}\eea
This ends the proof of Proposition~\ref{lemma 2.4}.
\end{proof}
\smallskip

\noindent
For $\ell=1,\dots,p$, using the shorthand notation,
\begin{equation}\label{7.4.6}
  X_\ell=\pl^{(\ell)}_{b_2}\hB01\ln\tau_n\Bigl|_{\LR},\quad
  {H}_{\ell}:= \left\{H_{ \ell }^{(1)},F_{\ell}\right\}_{\hB01}-\left\{H_{ \ell }^{(2)}, F_{\ell}\right\}_{\hB\ell1}
\end{equation} 
and $':=\hB01$, the equations (\ref{A2}) become (taking into account $\sum_{\ell=1}^p\pl^{(\ell)}_{b_2}=0$)
$$
  \{X_\ell,F_\ell\}=H_\ell,~~~~1\leqs \ell\leqs p,\mbox{~~~with}~~\sum_{\ell=1}^p X_\ell=0.
$$
%
%\end{proof}

\begin{proposition}\label{prop:7.1}

  Given for $\ell=1,\dots,p$ functions $H_\ell$ and $F_\ell$, such that the Wronskian of the derivatives
  $F_1',\dots,F_ p'$ is non-zero, the system of ODE's
  $$
    \{X_\ell,F_\ell\}=H_\ell ,~~~~1\leqs \ell\leqs p
  $$ 
  subjected to the condition $\sum_{\ell=1}^p X_\ell=0$, has a unique solution $(X_1,\dots,X_p)$, where $X_\ell$ is
  given by
  {\tiny $$\hspace*{3.5cm} \tiny {\ell \atop \downarrow} $$}\vspace*{-.4cm}
  \begin{equation}  X_\ell=\frac{F_\ell}{D}
  \det \(
    \begin{array}{ccccccccccc}
      F_1' &F_2'&F_3'& \cdots&-G_1&\cdots&F_ p'\\
       F_1'' &F_2''&F_3''& \cdots&-G_2&\cdots&F_ p''\\
      \vdots&\vdots&\vdots&&\vdots&&\vdots\\
      F_1^{(p )} &F_2^{(p )}&F_3^{(p )}&\cdots&-G_p&\cdots&F_ p^{(p )}  \\
    \end{array}\). \label{7.4.7}
 \end{equation} 
In this formula,  $D$ is the Wronskian of the functions $F_1',\ldots,F_ p'$,
 $$D:=
 \det \(
  \begin{array}{ccccccccccc}
    F_1' &F_2'&F_3'& \cdots& &F_ p'  \\
    F_1'' &F_2''&F_3''& \cdots& &F_ p''  \\
    \vdots&\vdots&\vdots&&&\vdots&\\
    F_1^{(p )} &F_2^{(p )}&F_3^{(p )}&\cdots& &F_ p^{(p )}\\
  \end{array}
  \)\neq0
$$
and the $G_i$'s are defined inductively as
\begin{equation}\label{7.4.8}
  G_{i+1}=G_i'+\sum_{\ell=1}^p \frac{H_\ell F_{\ell}^{(i)}}{F_{\ell}^2},~~G_0=0,~~G_1=\sum_1^p\frac{H_\ell}{F_\ell}.
\end{equation}
Moreover
\begin{equation}\label{7.4.9}
  \det \(
  \begin{array}{ccccccccccc}
    F_1&F_2&F_3&\ldots&F_ p&G_0 \\
    F_1' &F_2'&F_3'& \ldots&F_ p'&G_1 \\
    F_1'' &F_2''&F_3''& \ldots&F_ p''&G_2 \\
    \vdots&\vdots&\vdots&&\vdots&\vdots\\
    F_1^{(p )} &F_2^{(p )}&F_3^{(p )}& \ldots&F_ p^{(p )}&G_p
  \end{array}
  \)=0.
\end{equation}
\end{proposition}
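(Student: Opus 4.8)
The plan is to convert the coupled first-order system into a triangular hierarchy of purely algebraic identities and then finish with Cramer's rule. The starting point is that the bracket is a Wronskian, $\{A,B\}_{'}=A'B-AB'$, so each equation $\{X_\ell,F_\ell\}_{'}=H_\ell$ is, after dividing by $F_\ell^2$, equivalent to $\left(X_\ell/F_\ell\right)'=H_\ell/F_\ell^2$. I introduce $Y_\ell:=X_\ell/F_\ell$. The constraint $\sum_\ell X_\ell=0$ then reads $\sum_\ell F_\ell Y_\ell=0$, which (using $G_0=0$) is the $i=0$ instance of the relations I call $(\ast_i)$,
\[
  \sum_{\ell=1}^p F_\ell^{(i)}Y_\ell=-G_i,\qquad i=0,1,\dots,p.
\]

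The first step is to show, by induction on $i$, that any solution of the system satisfies all of $(\ast_0),\dots,(\ast_p)$. The base case $(\ast_0)$ is the constraint. Assuming $(\ast_i)$, I apply the derivation $'$, substitute the reduced equations $Y_\ell'=H_\ell/F_\ell^2$, and recognise the inductive definition (\ref{7.4.8}) of the $G_i$:
\[
  \sum_\ell F_\ell^{(i+1)}Y_\ell
   =-G_i'-\sum_\ell\frac{H_\ell F_\ell^{(i)}}{F_\ell^2}
   =-G_{i+1},
\]
which is exactly $(\ast_{i+1})$. Hence the whole hierarchy $(\ast_0),\dots,(\ast_p)$ holds at once.

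Next, the $p$ relations $(\ast_1),\dots,(\ast_p)$ form a linear system for $(Y_1,\dots,Y_p)$ whose coefficient matrix has entries $F_\ell^{(i)}$, $1\le i,\ell\le p$, and therefore determinant equal to the Wronskian $D\neq0$. Cramer's rule expresses each $Y_\ell$ as a ratio of determinants; multiplying by $F_\ell$ yields precisely the formula (\ref{7.4.7}) for $X_\ell$, and invertibility of the matrix forces uniqueness. For the ``moreover'' assertion, the full hierarchy $(\ast_0),\dots,(\ast_p)$ is nothing but the single linear dependence $\sum_{\ell=1}^p Y_\ell\,(F_\ell,F_\ell',\dots,F_\ell^{(p)})+(G_0,G_1,\dots,G_p)=0$ among the columns of the matrix in (\ref{7.4.9}); since the coefficient of the last column is $1\neq0$, that $(p+1)\times(p+1)$ determinant vanishes, which is (\ref{7.4.9}).

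The delicate point — and the main obstacle — is the status of existence. Everything above is a forward implication: a solution \emph{forces} the hierarchy, the formula (\ref{7.4.7}), and the vanishing (\ref{7.4.9}). The converse, that the $X_\ell$ \emph{defined} by (\ref{7.4.7}) actually satisfy the original ODEs and the constraint, cannot be extracted abstractly, since differentiating $(\ast_1),\dots,(\ast_p)$ produces only $p-1$ linear conditions on the residuals $Y_\ell'-H_\ell/F_\ell^2$, too few to force them to vanish (already for $p=1$ one sees that (\ref{7.4.9}) reduces to the nontrivial condition $H_1=0$). In the intended application this is not an issue: a genuine solution is supplied by Proposition \ref{lemma 2.4}, namely $X_\ell=\left.\pl^{(\ell)}_{b_2}\hB01\ln\tau_n\right|_{\LR}$, which automatically obeys $\sum_\ell X_\ell=0$ because $\sum_\ell\pl^{(\ell)}_{b_2}=0$. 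I would therefore read the proposition as follows: existence being granted by Proposition \ref{lemma 2.4}, the solution is unique and given by (\ref{7.4.7}), and the compatibility condition (\ref{7.4.9}) — precisely the PDE sought in Theorem \ref{Theo:1.1} — holds.
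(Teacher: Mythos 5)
Your proof is correct and takes essentially the same route as the paper's: the paper derives, for each $\ell$ and by induction on $i$, the identity $X_\ell^{(i)}=G_{\ell,i}+X_\ell F_\ell^{(i)}/F_\ell$ (with $G_i=\sum_{\ell}G_{\ell,i}$), sums over $\ell$ against the differentiated constraint to obtain exactly your hierarchy $(\ast_i)$, solves $(\ast_1),\dots,(\ast_p)$ by Cramer's rule to get (\ref{7.4.7}), and reads (\ref{7.4.9}) off the leftover relation $\sum_{\ell}X_\ell=0$. Your closing caveat about existence is also faithful to the paper, whose proof likewise establishes only that any solution is forced into (\ref{7.4.7}) and (\ref{7.4.9}), existence in the application being supplied by Proposition \ref{lemma 2.4}, exactly as you say.
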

\begin{proof}
If $X_\ell$ is a solution of the equation $\{X_\ell,F_\ell\}=H_\ell$, subjected to the condition $\sum_{\ell=1}^p
X_\ell=0$, then its derivatives are given by
\begin{equation}
  X_\ell^{(i)}=G_{\ell,i}+X_\ell\frac{F_\ell^{(i)}}{F_\ell},\label{4.7.8}
\end{equation}
where for a fixed $\ell$, the $G_{\ell,i}$ are defined inductively as
\begin{eqnarray*}
  G_{\ell,0}:=0,~~~ G_{\ell,1} := \frac{H_{\ell }}{F_{\ell}},~\ldots,~G_{{\ell },i+1}:= G'_{\ell,i}+\frac{H_{\ell }F_{\ell }^{(i)}}{F_{\ell }^2}.
\end{eqnarray*}
Indeed, starting with (\ref{4.7.8}) and using $X_{\ell }'=\frac{1}{F_{\ell }}(H_{\ell }+X_{\ell }F_{\ell }')$, one
computes inductively
\begin{eqnarray*}
  X_\ell^{(i+1)}&=&G'_{\ell,i}+X_{\ell}'\frac{F_\ell^{(i)}}{F_{\ell}}+X_\ell\frac{F_\ell^{(i+1)}}{F_\ell}
        -X_{\ell}\frac{F_{\ell }'F_{\ell }^{(i)}}{F_{\ell }^2}\\
  &=& \(G_{\ell, i}'+ \frac{H_{\ell }F_{\ell }^{(i)}}{F_{\ell }^2}\)+ X_{\ell }\frac{F_{\ell }^{(i+1)}}{F_{\ell }}
   = G_{{\ell },i+1}+ X_{\ell }\frac{F_{\ell }^{(i+1)}}{F_{\ell }},
\end{eqnarray*}
establishing (\ref{4.7.8}). Summing up (\ref{4.7.8}) for $\ell$ from $1$ to $p$, one finds
$$0=
 G_i+\sum_{\ell=1}^p
 X_{\ell }\frac{F_{\ell }^{(i)}}{F_{\ell }}
,~~~\mbox{where}~~
 G_i:=\sum_{\ell=1}^pG_{{\ell },i}.
 $$
Then solving this linear system for the $X_\ell$'s, one finds the ratio (\ref{7.4.7}) above. Then using that
solution and expressing $\sum_{\ell=1}^{p}X_\ell=0$ establishes (\ref{7.4.9}) and thus the proof of Proposition
\ref{prop:7.1}.
\end{proof}

This enables us to make the following statement, remembering the operators $\hB\ell1$, with
${}'=\hB01=\sum_{i=1}^m\J 1i\pl_{E_i}$, and
$\p\ell{b_1}$ with $\sum_{\ell=1}^p \p\ell{b_1}=0$. %This is precisely Theorem \ref{Theo:1.1} of the introduction, with the $\hB\ell i$ replaced by simpler notation.

\begin{theorem}\label{Theo:7.2}
The probability $\BP_n=\BP^A_n(c,\BE)$ as in (\ref{prob-model}), with the linear constraint
${\sum_{\ell=1}^{p}}\kappa_\ell\b\ell1=0$, with ${\sum_{\ell=1}^{p}}\kappa_\ell=1$, satisfies a non-linear PDE in
the boundary points of the subsets $E_1,\dots,E_m$ and in the target points $\b11,\dots,\b p1$, given by the
determinant of a $(p+1)\times (p+1)$ matrix
\begin{equation}
  \det \(
    \begin{array}{ccccccccccc}
      F_1&F_2&F_3&\ldots&F_ p&G_0\\
      F_1'&F_2'&F_3'& \ldots&F_ p'&G_1\\
      F_1'' &F_2''&F_3''& \ldots&F_ p''&G_2\\
      \vdots&\vdots&\vdots&&\vdots&\vdots\\
      F_1^{(p )} &F_2^{(p )}&F_3^{(p )}& \ldots&F_ p^{(p )}&G_p \\
    \end{array}
    \)=0,
\end{equation}
where the $F_\ell$,~$H_\ell^{(i)}$ and $C_\ell$ are given by in Proposition \ref{lemma 2.4} and the $G_\ell$
inductively by
%see (\ref{F,G})).
%
\bean
%  F_{\ell}&=&-\hB01\hB\ell1\ln\BP_n-n_\ell\J 1m,\\
  G_{\ell+1}&:=&G_\ell'+\sum_{i=1}^pF_i^{(\ell)}\(\hB01\frac{H_i^{(1)}}{F_i}-\hB i1\frac{H_i^{(2)}}{F_i}\), ~~~~G_0:=0.%,\\
%  H_{\ell}^{(1)}&:=&(\kappa_\ell(\delta_{1,m}-\vr_m)\hB01+2\J1m\p\ell{b_1})\ln\BP_n+C_\ell,\no\\
 % H_{\ell}^{(2)}&:=&(\hB02+\delta_{1,m}+2\J1m\b\ell1\hB01)\hB\ell1\ln\BP_n.
\eean

\end{theorem}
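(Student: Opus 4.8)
The plan is to assemble Theorem \ref{Theo:7.2} directly from the two preceding propositions, so that the only genuine work is bookkeeping and the identification of two a priori different families of functions $G_\ell$; the substantive analytic content has already been spent in proving Propositions \ref{lemma 2.4} and \ref{prop:7.1}.

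First I would invoke Proposition \ref{lemma 2.4}: for $1\leqs\ell\leqs p$ the functions $X_\ell:=\pl^{(\ell)}_{b_2}\hB01\ln\tau_n\Bigl|_{\LR}$ satisfy $\{X_\ell,F_\ell\}_{\hB01}=\{H_\ell^{(1)},F_\ell\}_{\hB01}-\{H_\ell^{(2)},F_\ell\}_{\hB\ell1}=:H_\ell$, with $F_\ell,H_\ell^{(1)},H_\ell^{(2)},C_\ell$ exactly as displayed there (and already rewritten in terms of $\ln\BP_n$ via (\ref{for:new}), so that the integral over $\BR^m$ only contributes the explicit $C_\ell$). Next I would record the scalar side condition $\sum_{\ell=1}^p X_\ell=0$: since $\sum_{\ell=1}^p\pl^{(\ell)}_{b_2}=0$ by (\ref{eq:sum=0}) (with $h=2$) and this is a single linear operator, applying it to $\hB01\ln\tau_n\Bigl|_{\LR}$ kills the sum. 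With the shorthand ${}':=\hB01$ of (\ref{7.4.6}), these are precisely the hypotheses of Proposition \ref{prop:7.1}: a system $\{X_\ell,F_\ell\}=H_\ell$ subject to $\sum_\ell X_\ell=0$.

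Then I would apply Proposition \ref{prop:7.1}, which yields the vanishing $(p+1)\times(p+1)$ determinant (\ref{7.4.9}) with its own inductively defined $G_i$ from (\ref{7.4.8}). The one real verification is that these $G_i$ coincide with the $G_\ell$ in the statement of the theorem. For this I would use the elementary identity $\pl(A/B)=\{A,B\}_{\pl}/B^2$, valid for any derivation $\pl$, which gives
$$\frac{H_\ell}{F_\ell^2}=\frac{\{H_\ell^{(1)},F_\ell\}_{\hB01}}{F_\ell^2}-\frac{\{H_\ell^{(2)},F_\ell\}_{\hB\ell1}}{F_\ell^2}=\hB01\frac{H_\ell^{(1)}}{F_\ell}-\hB\ell1\frac{H_\ell^{(2)}}{F_\ell}.$$
Substituting this into the recursion $G_{i+1}=G_i'+\sum_\ell H_\ell F_\ell^{(i)}/F_\ell^2$ of (\ref{7.4.8}) turns it verbatim into the recursion $G_{\ell+1}=G_\ell'+\sum_{i=1}^p F_i^{(\ell)}\bigl(\hB01\frac{H_i^{(1)}}{F_i}-\hB i1\frac{H_i^{(2)}}{F_i}\bigr)$ asserted in the theorem (after relabelling the summation index), and the base cases $G_0=0$ agree. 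Hence the two determinants are literally the same expression in $\ln\BP_n$, and (\ref{7.4.9}) is exactly the claimed PDE.

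Finally I would address the nondegeneracy hypothesis of Proposition \ref{prop:7.1}, namely that the Wronskian $D$ of $F_1',\dots,F_p'$ be nonzero. The cleanest route is to observe that the determinant in (\ref{7.4.9}) is an analytic expression in the boundary points of the $E_i$, the target points $\b\ell1$ and the couplings $c_k$; the argument above shows it vanishes on the dense open set where $D\neq0$, so by continuity it vanishes identically. I expect this density/continuity step to be the only subtle point, since everything else is a direct transcription: the heavy machinery (the Virasoro-based second-derivative identities (\ref{2nd der}) feeding Proposition \ref{lemma 2.4}, and the KP bilinear relations (\ref{C008})) has already been consumed upstream, and what remains here is pure linear algebra plus the $G$-matching computation.
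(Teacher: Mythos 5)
Your proposal is correct and takes essentially the same route as the paper, whose proof of Theorem \ref{Theo:7.2} consists precisely of feeding the equations $\{X_\ell,F_\ell\}_{\hB01}=H_\ell$ of Proposition \ref{lemma 2.4}, together with the side condition $\sum_{\ell=1}^pX_\ell=0$ (coming from $\sum_{\ell=1}^p\pl^{(\ell)}_{b_2}=0$), into Proposition \ref{prop:7.1}, the two recursions for the $G$'s being identified through the quotient-rule identity $H_\ell/F_\ell^2=\hB01\bigl(H_\ell^{(1)}/F_\ell\bigr)-\hB\ell1\bigl(H_\ell^{(2)}/F_\ell\bigr)$. Your explicit verification of that identity and your genericity remark about $D\neq0$ merely spell out steps the paper leaves implicit.
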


{\medskip\noindent{\it Proof of Theorem \ref{Theo:1.1}:\/} } It follows immediately from Theorem \ref{Theo:7.2} by noticing that in the notation of (\ref{diff-op}), the $ \hB\ell i$ are expressed as 
$$
\hB\ell 1=\pl_\ell,~~~
% $$
% $$
% \hB02=-\vr_0
% $$
% $$
 \hB0 2=-\vr_0.
 $$

%\proof of Corollary \ref{Cor:1.2}
{\medskip\noindent{\it Proof of Corollary \ref{Cor:1.2}:\/} } The simplification comes from the fact that for
one-time (i.e., $m=1$) the operators $\pl_0$ and $\pl_\ell$ differ by very little, namely:
$$
 \pl_0=-\pl_{_{\!E}},  ~~~\pl_\ell=\pl_b^{(\ell)}+\kappa_\ell \pl_{_{\!E}}, ~~~\vr=\vr_0=\vr_m.%=\pl_\ell^{(b)}-\kappa_\ell \pl_0
$$
This means that the expression in brackets in the definition of $G_{i+1}$ in (\ref{F,G}) can be re-expressed as follows, 
$$
 \pl_0\frac{H_\ell^{(1)}}{F_\ell}- \pl_\ell\frac{H_\ell^{(2)}}{F_\ell}
      = \pl_{_{\!E}}\frac{-H_\ell^{(1)}-\kappa_\ell H_\ell^{(2)}+2\kappa_\ell b_\ell F_\ell}{F_\ell}
        -\pl_b^{(\ell)}\frac{H_\ell^{(2)}}{F_\ell}=\pl_{_{\!E}}\frac{\bar H_\ell^{(1)}}{F_\ell}- 
        \pl_b^{(\ell)}\frac{H_\ell^{(2)}}{F_\ell},
 $$ 
upon setting $\bar H_\ell^{(1)}:=-H_\ell^{(1)}-\kappa_\ell H_\ell^{(2)}+2\kappa_\ell b_\ell F_\ell $, which one
checks\footnote{Upon using the commutation relation $[\vr_{_{\!E}},\pl_{_{\!E}}]=-\pl_{_{\!E}}$ and
$\vr=\vr_0=\vr_m$.} to be the expression $\bar H_\ell^{(1)}$ announced in (\ref{F,G-one-time}) and one repeats the
proof of Proposition~\ref{prop:7.1} with ${}^\prime=\pl_E$ (instead of ${}^\prime=\pl_0=-\pl_E$) and $X_\ell\mapsto
-X_\ell$, ending the proof of Corollary \ref{Cor:1.2}.\qed

\section{Examples}\label{sect7}

\subsection{One target point at the origin}%Nonintersecting Brownian motions leaving from and forced back to $0$ for two times}

In this case, $m=2$, $p=1$ and the diagonal matrix $A=0$.  %i.e. for $\BP_n^A(c,E_1\times E_2)$, where 
The matrix $\JR$ reads
$$
\JR=\frac1{1-c^2}\left(\begin{array}{cc}
                                     -1&-c\\
                                      -c&-1
                                      \end{array}\right)
                                      $$
                                      and one checks
$$
\kappa_0=-1,~b_1=0,~\kappa_1=1,~\pl_b^{(0)}=\pl_b^{(1)}=
\vr_b=0, ~~\vr_0=\vr_{_{\!E_1}}-c\frac{\pl}{\pl c},
 ~~\vr_2=\vr_{_{\!E_2}}-c\frac{\pl}{\pl c},$$
\be\pl_0=-\frac{1}{1-c^2}\left(\pl_{_{\!E_1}}+c\pl_{_{\!E_2}}\right)
,~~~~~\pl_1=\frac{1}{1-c^2}\left(c\pl_{_{\!E_1}}+ \pl_{_{\!E_2}}\right),~~~~~C_\ell=0
\label{del}\ee
So, for $\ell=1$, one has
\bea 
  F_1&=&-\pl_0\pl_1\log \BP_n+\frac{nc}{1\!-\!c^2}=
    \frac{ 1}{(1\!-\!c^2)^2}\left(\pl_{_{\!E_1}}\!+\!c\pl_{_{\!E_2}}\right)
\left(c\pl_{_{\!E_1}}\!+\! \pl_{_{\!E_2}}\right)\log \BP_n+\frac{nc}{1\!-\!c^2}
\no\\
H_1^{(1)}&=&- \vr_2\pl_0\log \BP_n= (\vr_{_{\!E_2}}-c\frac{\pl}{\pl c}) \frac{1}{1-c^2}\left(\pl_{_{\!E_1}}+c\pl_{_{\!E_2}}\right)\log \BP_n
\no\\
H_1^{(2)}&=&-\vr_0\pl_1\log \BP_n=- ( \vr_{_{\!E_1}}-c\frac{\pl}{\pl c} )
\frac{1}{1-c^2}\left(c\pl_{_{\!E_1}}+ \pl_{_{\!E_2}}\right)\log \BP_n %\frac{1}{1-c^2}\left(\pl_{_{\!E_1}}+c\pl_{_{\!E_2}}\right)
\label{F,H}
\eea 
and thus
$$G_0=0,~~~G_1=
F_1\left(\pl_0\frac{H_1^{(1)}}{F_1}-
  \pl_1\frac{H_1^{(2)}}{F_1}\right)
=\frac{1}{F_1}\left(\left\{H_1^{(1)},F_1\right\}_{\pl_0}
-
 \left\{H_1^{(2)},F_1\right\}_{\pl_1}\right)
 $$
 leading to the PDE, with $\pl_0$ and $\pl_1$ as in (\ref{del}) and $H_1^{(j)}$ and $F_i$ as in (\ref{F,H}):  (see \cite{AvM-coupled} and \cite{MR2150191})
$$
 \det\left(\begin{array}{cc}F_1&G_0\\
                                     \pl_0 F_1 &G_1\end{array}\right)
= \left\{H_1^{(1)},F_1\right\}_{\pl_0}
-
 \left\{H_1^{(2)},F_1\right\}_{\pl_1} =0$$

\subsection{%Nonintersecting Brownian motions leaving from $0$ and forced to 
 Target points with some symmetry}

Consider non-intersecting Brownian motions leaving from $0$ and  forced to $p$ target points at time $t=1$, with the only condition that the left-most and right-most target points are symmetric with respect to the origin, with $p-2$ intermediate target points thrown  in totally arbitrarily; this example will be used in section \ref{section:example}. 
It is convenient to rename the target points $\beta_1<\ldots<\beta_p$, as follows:
\be \begin{array}{ccccccccccc}
\tilde a&<& -\tilde c_1&< \ldots <&-\tilde c_{p-2}&<&-\tilde a
\\ \\
~n_{+ }& &  ~n_1           &  \ldots      & n_{p-2}  &  &~n_- 
\end{array}\label{targets}\ee
with the corresponding number of particles forced to those points at time $t=1$. 
Using the change of variables (\ref{change of variables}) from $\beta_i$'s to 
 \be
b=(b_1,...,b_p)=(a,-c_1,-c_2,...,-c_{p-2},-a),
\label{A-matrix}
 \ee
one is led to the diagonal matrix of the form\footnote{Note the $c_i$ have nothing to do with the couplings $c_i$ appearing in (\ref{prob-model}).}:
 \be
 A:= \diag\bigl(\overbrace{  a,\dots,  a}^{n_+},\overbrace{    -c_1,\dots,    -c_1}^{n_1},\dots, 
    \overbrace{   -c_{p-2},\dots,   -c_{p-2}}^{n_{p-2}},
       \overbrace{ -   a,\dots,  -a}^{n_-}\bigr)
\label{7.A}
 \ee
with the obvious constraint $\sum_{ 1}^p \kappa_i b_i=\frac{1}{2}a+\frac{1}{2}(-a)=0, $ as in (\ref{constraint}), and thus
$$
\kappa_1=\kappa_p=\frac12\mbox{~~and}~~\kappa_i=0\mbox{  for} ~2\leqs i\leqs p-1.
$$
%
%one transforms the $\beta_i$'s is led to a matrix $A$, consisting of diagonal entries ())\be b=(b_1,...,b_p)=(a,-c_1,-c_2,...,-c_{p-2},-a), \label{A-matrix}\ee
%the constraint (\ref{constraint}) becomes $\frac{1}{2}b_1+\frac{1}{2}b_p=0, $ and thus$$\kappa_1=\kappa_p=\frac12\mbox{~~and}~~\kappa_i=0\mbox{  for} ~2\leqs i\leqs p-1.$$Thus the matrix $A$ reads:
 %
 %\be A:= \diag\bigl(\overbrace{  a,\dots,  a}^{n_+},\overbrace{    -c_1,\dots,    -c_1}^{n_1},\dots,  \overbrace{   -c_{p-2},\dots,   -c_{p-2}}^{n_{p-2}},     \overbrace{ -   a,\dots,  -a}^{n_-}\bigr)\label{7.A}\ee
 %
Moreover, setting $c=(c_1,\ldots,c_{p-2})$, formulae (\ref{diff-op}) become
\be
\pl_b^{(1)}=\frac{1}{2}\left(-\frac{\pl}{\pl a}-\pl_c\right),\pl_b^{(p)}=\frac{1}{2}\left(\frac{\pl}{\pl
     a}-\pl_c\right),\pl_{b}^{(\ell)}=\frac{\pl}{\pl c_{\ell-1}},\quad 2\leqs\ell\leqs p-1,
\label{7.diff-op}\ee
and $
\vr=\vr_E 
      -a\frac{\pl}{\pl a}-\vr_c
     ; $ also set ${}^\prime =\pl_E$. 
Besides the renaming $n_1=n_+,~n_p=n_-$ and $n_k\mapsto n_{k-1}$ for $2\leqs k\leqs p-1$, already mentioned, one also has, referring to formulae (\ref{F,G-one-time}), the following renaming:
\bean
  & & F_1\mapsto F_+, ~~ F_p\mapsto F_-, ~~  F_k\mapsto F_{k-1},\mbox{~for }2\leqs k\leqs p-1,\\
  & &\bar H_1^{(1)}\mapsto H_+^{(1)},~~\bar H_p^{(1)}\mapsto H_-^{(1)},~ ~ 
          H_1^{(2)}\mapsto H_+^{(2)}, ~~ H_p^{(2)}\mapsto H_-^{(2)},\\
  & & \bar H^{(1)}_\ell\mapsto H^{(1)}_{\ell-1},~ ~  H^{(2)}_{\ell}\mapsto H^{(2)}_{\ell-1}, 
          \mbox{ for } 2\leqs \ell \leqs p-1.
 \eean 
%\ref{F,G-one-time}
%
Then, one checks from Corollary \ref{Cor:1.2}, formulae (\ref{F,G-one-time}),
that\footnote{In the formulae below (\ref{Expressions-1-time}), the constants $C_\pm$ and $C_\ell$ have the value: 
$$C_\pm= -n_{\pm}\left(\pm a\pm\frac{n_{\mp}}{a}+2\sum^{p-2}_{r=1}\frac{n_{r}}{\pm a+c_{r}}\right) 
 ~~\mbox{and}~~ C_\ell=2n_{\ell}\left(c_{\ell}+\frac{n_+}{c_{\ell}\!+\!a}+\frac{n_-}{c_{\ell}\!-\!a}
     +\sum^{p-2}_{r=1\atop{r\neq\ell}}\frac{n_r}{c_{\ell}\!-\!c_r}\right) 
$$}
for $1\leqs \ell\leqs p-2$ and for $\BP:=\BP_n^A(E)$, with $\vr=\vr_E 
      -a\frac{\pl}{\pl a}-\vr_c$ (as in (\ref{prob-model}) for $m=1$)%
{\footnotesize\bea  
    F_{\pm}&=&\frac{1}{2}(\mp\frac{\pl}{\pl a}-\pl_c+\pl_{_{\!E}})\pl_{_{\!E}}\ln \BP+n_{\pm},~~~~~~~~~~~~~~~~~~~~~~~~~~~~F_{\ell} =\!\frac{\pl}{\pl c_{\ell}}\pl_{_{\!E}}\ln \BP\!+\!n_{\ell,}%\mbox{~~and~~}
 % \tilde F_{\ell} =\frac{\pl}{\pl c_{\ell}}\pl_{_{\!E}}\ln \BP+n_{\ell} , 
 \no\\
    H^{(1)}_{\pm}&=&\frac{1}{4}\left(-2\pl_{_{\!E}}\vr+(\vr +3)\left(\mp\frac{\pl}{\pl a}-\pl_c+\pl_{_{\!E}}\right)\right)\ln \BP 
    +C_\pm,~~  H^{(1)}_{\ell}=2\frac{\pl}{\pl c_{\ell}}\ln \BP\!+\!C_\ell, \no\\
       H^{(2)}_{\pm}&=&\frac{1}{2}(1-\vr\pm 2a\pl_{_{\!E}})(\mp\frac{\pl}{\pl a}-\pl_c+\pl_{_{\!E}})\ln \BP,~~   H^{(2)}_{\ell}=(1\!-
\!\vr\!-\!2c_{\ell}\pl_{_{\!E}})\frac{\pl}{\pl c_{\ell}}\ln \BP 
  %
   %
  %\\% \\
  %& &\hspace*{7cm}1\leqs\ell\leqs p-2\no\\
 %\\
 % \tilde H^{(2)}_{\ell}&=&(1-\vr-2c_{\ell}\pl_{_{\!E}})\frac{\pl}{\pl c_{\ell}}\ln \BP,%\qquad\qquad  1\leqs\ell\leqs p-2
%\label{Expressions-1-time}
.\label{Expressions-1-time}\eea } %\be \label{Expressions-1-time}\ee  
%
%\be
 % \left\{\frac{1}{2}\left(\mp\frac{\pl}{\pl\beta}-\pl_{\gamma}\right)\pl\log P,F_{\pm}\right\}_{\pl}=
% \tilde {  H}_{\pm}= \{\tilde H_{\pm}^{(1)},\tilde F_{\pm}\}_{\pl}-\{\tilde H_{\pm}^{(2)},\tilde F_{\pm}\}_{\frac{1}{2}(\mp\frac{\pl}{\pl a}-\pl_c)}
%\ee    
%
%\be 
 % \left\{\frac{\pl}{\pl\gamma_{\ell}}\pl \log P,\tilde F_{\ell}\right\}_{\pl}=
%  \tilde{  H}_{\ell}= \{\tilde H_{\ell}^{(1)},\tilde F_{\ell}\}_{\pl}-
%  \{\tilde H_{\ell}^{(2)},\tilde F_{\ell}\}_{\frac{\pl}{\pl c_{\ell}} } ,\quad 1\leqs\ell\leqs %p-2.
%\ee
%
In accordance with formulae (\ref{7.4.6}), adapted to the case $m=1$, one defines for later use: 
\bea
H_{\pm}&:=&\{H^{(1)}_{\pm},F_{\pm}\}_{\pl_E}-\{H^{(2)}_{\pm},F_{\pm}\}_{\tfrac12 (\mp \frac{\pl}{\pl a}-\pl_c)}
\no\\
H_{\ell}&:=&\{H^{(1)}_{\ell},F_{\ell}\}_{\pl_E}-\{H^{(2)}_{\ell},F_{\ell}\}_{\frac{\pl}{\pl c_\ell}}
\label{H's}\eea
and 
one checks that, with this notation (\ref{H's}) and upon decoding formula (\ref{F,G-one-time}) for the $G_k$'s, 
$$
G_{k+1}=\pl_{E}G_k+\frac{H_+F_+^{(k)}}{F_+^2}+\frac{H_-F_-^{(k)}}{F_-^2}
+\sum_{\ell=1}^{p-2}\frac{H_\ell F_\ell^{(k)}}{F_\ell^2},
$$
where $F^{(k)}$ is a shorthand for $(\pl_E)^kF$.  With these expressions in mind, $\BP:=\BP_n^A(E)$ satisfies the
(near-Wronskian) PDE (\ref{PDE}), i.e., 
%determinant of a $(p+1)\times (p+1)$ matrix
%
\begin{equation}
  \det \(
    \begin{array}{ccccccccccc}
      F_+&F_-&F_1&\ldots&F_ {p-2}&G_0\\
      F_+'&F_-'&F_1'& \ldots&F_ {p-2}'&G_1\\
      F_+'' &F_-''&F_1''& \ldots&F_ {p-2}''&G_2\\
      \vdots&\vdots&\vdots&&\vdots&\vdots\\
      F_+^{(p )} &F_-^{(p )}&F_1^{(p )}& \ldots&F_ {p-2}^{(p )}&G_p \\
    \end{array}
    \)=0,
\label{7.9}\end{equation}

{\bf Special case}: For Brownian motions forced to $a$ and $-a$, without the intermediate points, the formula (\ref{7.9}) turns into the following determinant, with $F_{\pm}$ and $H^{(i)}_{\pm}$ as in (\ref{Expressions-1-time}), but with all $c$-partials removed: 
{\footnotesize\bean
\lefteqn{ F_+F_- \det \(
    \begin{array}{ccccccccccc}
      F_+&F_-&  G_0\\
      F_+'&F_-'&  G_1\\
      F_+'' &F_-''& G_2\\
    \end{array}
    \)}\\
    &=& F_+F_-\det \(
    \begin{array}{ccccccccccc}
      F_+&F_-&   0\\
      F_+'&F_-'&  \frac{H_+}{F_+}+\frac{H_-}{F_-}\\
      F_+'' &F_-''& \frac{H_+'}{F_+}+\frac{H_-'}{F_-}\\
    \end{array}
    \)\\
    &=&(H_+F_-+H_-F_+)\{F_+,F_-\}'- (H_+'F_-+H_-'F_+)     \{F_+,F_-\}=0.
\eean}

\section{Pearcey process with inliers}\label{section:example}

In this section, we consider non-intersecting Brownian motions leaving from $0$ and  forced to $p$ target points at time $t=1$, with the only condition that the left-most and right-most target points are symmetric with respect to the origin, with $p-2$ intermediate target points thrown  in totally arbitrarily, exactly as in section 7.1.
 The purpose of this section is to identify the critical process obtained by letting $n:=n_+=n_-\rg \iy$ and by rescaling $\tilde a$ and the $\tilde c_i$ accordingly, while keeping $n_1,\ldots,n_{p-2}$ fixed. This is the content of Theorem \ref{Th: main}.

 %The probability of finding the Brownian particles at times $t_k$ in intervals $E_k$ is the given .........

{\medskip\noindent{\it Proof of Theorem \ref{Th: main}:\/} } The proof consists of letting $n=n_+=n_- \rg \iy$ in
the kernel (\ref{BM kernel}) and in the PDE (\ref{PDE}). In the proof, which requires several steps, we shall
restrict ourselves to $m=1$ (one-time), except for {\em Step 2}, which deals with the kernel.
\newline{\em Step 1}:  ~ {\em The PDE.}  The probability $\BP:=\BP_n^A(E)$ satisfies the
(near-Wronskian) PDE (\ref{7.9}); see section 7.2.

%\midbreak 
% \newpage

\noindent {\em Step 2}: {\em The scaling limit of the Brownian kernel}. %{\medskip\noindent{\it Proof of Theorem \ref{Th: main}:\/} } 
  Non-intersecting Brownian motions leaving from $0$, such that $n_r$ particles are forced to $\beta_r$ at time $t=1$, are given by the kernel (\ref{BM kernel}), which is, in this instance, conveniently rewritten as 
   \bea 
 % \lefteqn{H_n^{(p)}(x,y)dy }
 %\label{BaikKernel} %\no\\
%
\lefteqn{H^{(n)}_{t_k,t_\ell}(x,y;\tilde a,-\tilde c_1,\ldots,-\tilde c_{p-2},-\tilde a)dy}\no\\
&=&
 -\frac{dy}{2 \pi^2 \sqrt{(1-t_k)(1-t_\ell)}}
  \int_{\mathcal{C}} dV
\int_{\Gamma_{L}} dU ~\frac{1}{U-V}
\no\\
&&~\times~\frac{e^{ -\frac{ t_kV^2}{1-t_k}  +
\frac{2xV}{1-t_k}  -n_+\ln (V-\tilde a)-n_-\ln (V+\tilde a)}}
{e^{ \frac{-t_\ell U^2}{1-t_\ell} +\frac{2yU}{1-t_\ell}  
 -n_+\ln (U-\tilde a)-n_-\ln (U+\tilde a)}}
%
%\no\\ \no\\
%&&~~~~~~ \times
~~\prod_{r=1}^{p-2}\bigg(\frac{U+\tilde c_r}{V+\tilde c_r}\bigg)^{n_r}
 \no\\ \no\\
 &&-\left\{ \begin{array}{l}
            0, ~~~~~~~~~~~~~~~~~~~~~~~~~~~~~~~~~~~~~~
            \mbox{for}~~t_k\geqs
            t_\ell,\\  %\\  
             \frac{dy}{\sqrt{\pi (t_\ell-t_k)}}
            e^{-\frac{(x-y)^2}{t_\ell-t_k}}
            e^{\frac{x^2}{1-t_k}-\frac{y^2}{1-t_\ell}},~~~~~
            \mbox{for}~~t_k<t_\ell.
            \end{array}\right.    %\label{BM kernel}
  \label{7.BM kernel}\eea  
One then uses the same steepest descent method as for the case without inliers; the so-called steepest descent
$F$-function is the one (depending on $U$ or $V$) appearing in the exponential, with three consecutive derivatives
being $=0$ at the origin; the change of integration variables $U=U'(n/2)^{1/4}$ and $V=V'(n/2)^{1/4}$ then leads,
in the limit for $n=n_+=n_-\rg \iy$ about the saddle point, to the kernel (\ref{Pearcey-kernel}) (see for instance
\cite{TW-Pear} and in the asymmetric case \cite{AOvM}). So, the limit is \be \lim_{n\rg \iy}
H^{(n)}_{t_i,t_j}(\tilde x,\tilde y;\tilde a,\tilde c_1,\ldots,\tilde c_{p-2},-\tilde a)d\tilde y\Bigr| _{
\begin{array}{l} t_k =\frac12 +\frac{\tau_k}{4\sqrt{2n}}\\ \tilde x= \frac{X}{4(n/2)^{1/4}}\\ \tilde y=
\frac{Y}{4(n/2)^{1/4}}\\ \tilde a=\sqrt{n}\\ \tilde c_\ell= u_\ell \left(\frac n2\right)^{1/4}
   \end{array} 
   }
   = K^{\PR}_{\tau_i,\tau_j}(X,Y;~u_1,\ldots,u_{p-2})dY,\label{7.limit}\ee
where $K^{\PR}_{\tau_i,\tau_j}(X,Y;~u_1,\ldots,u_{p-2})$ is the Pearcey kernel with inliers (\ref{Pearcey-kernel}).

  \bigskip

  {\em Step 3}: {\em The scaling limit of the PDE}. %We now use the scaling, appearing in (\ref{7.limit}). For computational convenience, we set $z:=n^{-1/4}$  and we rescale (very slightly) the variables $\tau_k,X,u_\ell$; namely $s:=\tau/\sqrt{8},~v_j:=2^{1/4}u_j,~\xi:=X/2^{5/4}$, thus yielding:
% 
%  apply the following scaling, as mentioned in (\ref{Pearcey-scaling}): and $s=\tau/\sqrt{8},~v_j=2^{1/4}u_j,~x_i=\xi_i/2^{5/4}$
  %
%
As mentioned, for the proof we limit ourselves to the one-time case, i.e., $m=1$. %We now combine the rescaling (\ref{7.scaling}) with the change of variables (\ref{change of variables}), in particular footnote \ref{foot6}, in order to apply to the expressions (\ref{Expressions-1-time}) appearing in the PDE (\ref{PDE}). 
  %
  %
  %
%  Using the first equalities of the scaling above, %  $$(t,x,a,c_i)\longrightarrow (\tau,X,u_\ell) $$ the kernel (\ref{BM kernel}) 
  %
  %
%  tends to the Pearcey kernel (\ref{Pearcey-kernel}) with inliers. It is the precise same proof as for the standard Pearcey kernel; see for instance \cite{TW-Pear}.
%
%
%\newpage
%We now apply the scaling (\ref{7.scaling}) to the PDE obtained in Theorem \ref{Theo:1.1}; for the proof we limit ourselves to the one-time case, i.e., $m=1$. 
%
We now proceed in two steps: 
\newline {\bf (i)} The change of variables (\ref{change of variables}) (especially footnote 6) from the non-intersecting Brownian motion probability to the matrix model (\ref{prob-model}); this change of variables appears in the first column of the table (\ref{7.scaling}) below. In other terms, it is the time-dependent change from the variables $(\tilde x,\tilde a,\tilde c)$ to the variables $(x,a,c)$, yielding in particular the diagonal matrix $A$ as in (\ref{7.A}). %
 \newline {\bf (ii)} Subsequently apply the scaling given by (\ref{7.limit}) with $z:=n^{-1/4}$ and a very small renaming $s:=\tau/\sqrt{8},~v_j:=2^{1/4}u_j,~\xi:=X/2^{5/4}$ for computational convenience. This appears in the second column of table (\ref{7.scaling}) below. 
   \be\begin{array}{ll   |   llll}
 & & t  &=\frac12  +\frac{\tau}{4\sqrt{2n}}=\frac12\left(1+ \bigl(\frac{\tau}{\sqrt{8}}\bigr)z^2\right)
 =:\frac12\left(1+ s_kz^2\right) \\
 x&=\tilde x\sqrt{\frac{2 }{t(1-t)}}&\tilde  x&= \frac{X}{4(n/2)^{1/4}}=\bigl(\frac{X}{2^{5/4}}\bigr)\frac{z}{\sqrt{2}}
   =:\frac{\xi z}{\sqrt{2}}\\
a&=\tilde a\sqrt{\frac{2t }{ 1-t}}  & \tilde a&=\sqrt{n} =\frac{1}{z^2}
 \\
 c_\ell&=\tilde c_\ell \sqrt{\frac{2t }{ 1-t}}    &     \tilde c_\ell&= u_\ell \left(\frac n2\right)^{1/4}=\frac{(u_\ell 2^{1/4})}{\sqrt{2}z}
  =:\frac{v_\ell }{\sqrt{2}z}
 \end{array} \label{7.scaling}\ee

  Concatenating these two scalings leads to the following; in the string of equalities below, the change corresponding to {\bf (i)} is indicated by $\stackrel{*}{=}$, whereas the second change {\bf (ii)} is indicated by $\stackrel{**}{=}$:  
%
%
%
%The first equality $\stackrel{*}{=}$ below follows from
%Proposition \ref{Prop BM}, whereas equality
%$\stackrel{**}{=}$ follows from the scaling proposed in
%the statement of Theorem \ref{Pearcey}, with
%$z:=n^{-1/4}$, except for a small
%change\footnote{$s=\tau/\sqrt{8},~v_j=2^{1/4}u_j,~x_i=\xi_i/2^{5/4}$},
%which will be accounted for later:
%
\bean 
%  \lefteqn{P_n(E,s,v_j)}
%    \\
   P (E,s,v) &:=& \ln\BP_n^{(  \tilde a,-   \tilde c_2,\ldots,
   - \tilde  c_{p-1},-  \tilde a)}(\mbox{all}~x_i(t)\in    \tilde E)\Bigr|_{\begin{array}{l}
    t=\frac{1}{2}(1+sz^2)\\
      \tilde  a= 1/z^2\\
     \tilde  c_i=v_i/(\sqrt{2}z)\\
      \tilde  E=  Ez/ \sqrt{2} \end{array}}
 \eean     %\no\\
 \bea   &\stackrel{*}{=}&
     \ln  \BP_n^{A}\left(    \tilde E \sqrt{\frac{2}{t(1-t)}}
  ;  \overbrace{\sqrt{\frac{2t}{1-t}}(  \tilde a,    \tilde c,- \tilde a)
  }^{\mbox{\footnotesize entries of diagonal matrix $A$}}
  \right)\Bigr|_{\begin{array}{l}
    t=\frac{1}{2}(1+sz^2)\\
      \tilde  a= 1/z^2\\
     \tilde  c_i=v_i/(\sqrt{2}z)\\
      \tilde E=  Ez/ \sqrt{2} \end{array}}
   \no\\&\stackrel{**}{=}&\ln \BP^A_n  \left(\frac{2zE}{\sqrt{1\!-\!s^2z^4}};
  \overbrace{   \sqrt{\frac{1\!+\!sz^2}{1\!-\!sz^2}}
   \Bigl(\frac{\sqrt{2}}{z^2}
     ,\frac{v_i}{z}
     ~,
      -\frac{\sqrt{2}}{z^2}
    \Bigr)}^{\mbox{\footnotesize entries of diagonal matrix $A$}}
	     \right)
     \no\\&=: &\ln \BP^A_n  (E';\underbrace{a,c,-a}_{\begin{array}{c}\mbox{\footnotesize entries of  diagonal}\\
     \mbox{\footnotesize matrix $A$}\end{array}})=: Q  (E'; a,c).%\\
   %  &=:& Q_n (E';\pm a',c') ,
 \label{7.comp} \eea 
  Note that in the rest of this section, $E$ and $E'$ refer to complement of compact intervals; i.e., we shall be dealing with gap probabilities. The identity (\ref{7.comp}) suggests the $z$-dependent map:
   $$ T_z^{-1}:~~(E,s,v_j)\mapsto (E',a,c_j) , %~~1\leqs i \leqs 2r,~~
 ~~~~1\leqs j\leqs p-2, 
 $$
given by
\be
  E'%=\tilde y_i\sqrt{\frac{2}{t(1-t)}}
  =
   \frac{2zE}{\sqrt{1-s^2z^4}},
   ~~a%=\tilde a\sqrt{\frac{2t}{1-t}}
  =\frac{\sqrt{2}}{z^2}
     \sqrt{\frac{1+sz^2}{1-sz^2}}
  ,~
   c_j=%=\tilde c_i\sqrt{\frac{2t}{1-t}}
    \frac{v_j}{z}
     ~\sqrt{\frac{1+sz^2}{1-sz^2}},
\label{map}\ee
with inverse map
$$ T_z:~~(E',a,c)\mapsto  (E,s,v_j),%, ~~1\leqs i \leqs 2r,
~~~~1\leqs j\leqs p-2,
 $$ 
  given by
 \be E=\frac{\sqrt{2}azE'}{ a^2z^4+2 },~~
 s=\frac{a^2z^4-2}{z^2(a^2z^4+2)},~~
  v_j=\frac{\sqrt{2}c_j}{az}.
  \label{inverse map}
  \ee
  Then summarizing the above, one has 
  \bean
   Q (E'; a,c)&:=&\log \BP^A_n  (E';a,c,-a)=\log   \BP^A_n(T_z^{-1}(E;s,v_j))=:P (E;s,v)
%   \\
%  P_n (E;s,v)&:=&\log\tilde\BP_n  
%   \left(\frac{2zE}{\sqrt{1+sz^2}\sqrt{1-sz^2}};
%   \pm\frac{\sqrt{2}}{z^2}
%     \sqrt{\frac{1+sz^2}{1-sz^2}},\frac{v_i}{z}
%     ~\sqrt{\frac{1+sz^2}{1-sz^2}}\right)
   ,\eean
and thus%From ??, it is known that %the log of the normalized matrix integral, namely
$$
  Q(E',  a,c)=P\left( \frac{\sqrt{2}azE'}{ a^2z^4+2 }, ~\frac{a^2z^4-2}{z^2(a^2z^4+2)},~
   \frac{\sqrt{2}c_j}{az}\right)
$$
satisfies the PDE (\ref{PDE}) in the variables $E',a,c$, in terms of the operators specified in (\ref{7.diff-op}), 
 with $F_\pm,~F_\ell, ~H^{(i)}_{\pm},~H^{(i)}_{\ell}$ given by (\ref{Expressions-1-time}). In order to express the PDE in terms of the function $P(E;s,v)$, one must express all partials of $Q(E';a,c)$ in terms of partials of $P(E;s,v)$ in $E, ~s,~v$; e.g.,
\bean
\pl_{E'}Q(E'; a,c)\Bigr|_{T_z}
&=&
 \frac{\sqrt{2}azE'}{ a^2z^4+2 }\pl_EP\Bigr|_{T_z} 
 =\frac{\sqrt{1-s^2z^4}}{2z}\pl_EP(E;s,v)
\eean
and thus the operators $\pl_{_{\!E'}}$ and $\pl_{_{\!E}}$, as acting on $Q$ and $P$ respectively, and similarly for the others, are related by the following; we also indicate what the relationship becomes for $z\rg 0$:   \footnote{Since $\pl_c=\sum_1^{p-2} \pl_{c_i}$ and $\pl_v=\sum_1^{p-2} \pl_{v_i}$, the third relation is valid for $\pl_c$ and $\pl_v$ as well.}
\bea
 \pl_{_{\!E'}} \Bigr|_{T_z}
&=&\frac{\sqrt{1-s^2z^4}}{2z}\pl_{_{\!E}}
=\left(
  \frac{1}{2z}-\frac14 s^2z^3-\frac1{16}s^4z^7+O(z^9)
\right)\pl_{_{\!E}}
 \no\\
 \vr_{_{\!E'}}\Bigr|_{T_z}&=&\vr_{_{\!E}}
\no\\
 \pl_{c_i}\Bigr|_{T_z}&=&
 z \sqrt{\frac{1-sz^2}{1+sz^2}}\pl_{v_i}
 =\left(z-sz^3+\frac12  s^2z^5+O(z^7)\right)\pl_{v_i}
\no\\
\sqrt{2} \frac{\pl}{\pl a}\Bigr|_{T_z}&=& {(1-sz^2)^2} 
  \sqrt{\frac{1+sz^2}{1-sz^2}}\frac{\pl}{\pl s}
-  { z^2} \sqrt{\frac{1-sz^2}{1+sz^2}}(\vr_v+sz^2\vr_{_{\!E}}) 
 \no\\
 &=& \frac{\pl}{\pl s}- {z^2} (\vr_v+s\frac{\pl}{\pl s})
- {sz^4} \left(\frac12 s\frac{\pl}{\pl s}+\vr_{_{\!E}}-\vr_v\right)+O(z^6)
. \label{del's}
\eea% \be\label{del's}\ee

For notational simplicity, derivatives will often be abbreviated in the obvious way:
\be
 (\pl_{_{\!E'}})^jF_i  \mapsto  F_i^{(j)}  ,~~
  (\pl_{_{\!E}})^jP  \mapsto P^{(j)} ,~~
  \frac{\pl}{\pl s} P  \mapsto  \dot P,\ldots,
 \label{abrev}\ee
 while keeping in mind from (\ref{del's}) that $\pl_{_{\!E'}}$ acting on functions of $(E',a,c)$, as $F_\pm, ~H_\ell$ and $G_\ell$, translates, {\em to leading order}, into $\pl_{_{\!E}}/(2z)$ acting on functions of $(E,s,v)$; also notice the {\em big gaps} in the first few terms of the series for $\pl_{_{\!E'}}$.  In view of the PDE (\ref{PDE}), one needs the series expansion in $z$ of the $F$'s, the $H$'s and the $G$'s and their derivatives $\pl_{E'}$. This is the content of: 
 
 \begin{lemma} \label{Lemma 7.4}Introducing the expression $\BY$, with $\vr=\vr_E-\vr_v,$ and $v=(v_1,\ldots,v_{p-2})$,
  \be
 \frac12\BY:=
 4(\vr-2s\pp{}{s}-2)\pl_{_{\!E}}^2P+16\pl_v\pl_{_{\!E}} { \dot  P} +8P^{\!\!\!\!\dot{{}}~\!\dot{}~\!\dot{}}+ \left\{\pl_{_{\!E}}   \dot P 
 ,\pl_{_{\!E}}^2 P\right\}
 _{\pl_{_{\!E}}},
\label{Y}\ee
one checks, (remember from (\ref{H's}) the definition of $H_\pm$ and $H_\ell$)
$$
\pl_{E'}^iF_\pm=\left(\frac{\pl_E}{2z}\right)^i\left(\frac{1}{z^4}+\frac{1}{8z^2}\pl_E^2P\mp\frac{1}{4\sqrt{2}z}\pl_E\dot P\right)+O(z^{-i})
$$
$$
\pl_{E'}^i F_\ell=\left( \frac{\pl_E}{2z}\right)^i\left(
\frac12 \pl_{v_\ell} \pl_E P +n_\ell-\frac{sz^2}{2}\pl_{v_\ell}\pl_EP
\right)+O(z^{3-i})
$$
$$
\frac{H_+}{F_+}+\frac{H_-}{F_-}+\sum_{\ell=1}^{p-2}\frac{H_\ell}{F_\ell}
\stackrel{*}{=}
\frac{1}{64z^2}\left(\BY-3(\pl_E^2P)(\pl_E^2\dot P)\right)+O(1)
$$
\be
\frac{H_+\pl_{E'}^i F_+}{F_+^2}+
\frac{H_-\pl_{E'}^i F_-}{F_-^2}+\sum_{\ell=1}^{p-2}\frac{H_\ell\pl_{E'}^i F_\ell}{F_\ell^2}
\stackrel{*}{=}
\frac{3}{32z}\left( \pl_E^3P\right) \left(\frac{\pl_E}{2z}\right)^{1+i}\dot P  +O(z^{-i-1})
 \label{der:F,G}\ee
and also, for $k=0,1,\ldots,$ one has
 \be G_{k+1}+\frac{3\sqrt{2}}{16}   (F_--F_+)^{(k+1)}P''
 = \frac{\BY^{(k)}}{16(2z)^{k+2}}+O(z^{-k-1}).\label{G's}\ee
 \end{lemma}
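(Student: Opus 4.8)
The plan is to prove the five displayed estimates by direct substitution of the operator dictionary (\ref{del's}) into the explicit expressions (\ref{Expressions-1-time}), keeping systematic track of powers of $z$. The two inputs that make everything work are, first, the scaling $n=n_\pm=z^{-4}$ together with $a=\frac{\sqrt2}{z^2}(1+sz^2+O(z^4))$ coming from (\ref{map}); and second, the \emph{big gaps} in the series (\ref{del's}): $\pl_{E'}=\frac{1}{2z}\pl_E+O(z^3)$ (gap $z^4$) and $\pl_{c_i}=z\pl_{v_i}+O(z^3)$ (gap $z^2$). Before starting I would also record the induced expansions of the operators appearing inside the $H$'s, most importantly $a\pl_a=\frac{1}{z^2}\pl_s-\vr_v+O(z^2)$ and $\vr_c=\vr_v+O(z^2)$, hence, for the section-7 Euler operator $\vr=\vr_E-a\pl_a-\vr_c$, one gets $\vr=\vr_E-\frac{1}{z^2}\pl_s+O(z^2)$. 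Note this is a \emph{different} $\vr$ from the one $\vr=\vr_E-\vr_v$ used in the definition (\ref{Y}) of $\BY$, and the two must be carefully kept apart.

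First I would establish the two $F$-expansions. Substituting (\ref{del's}) into $F_\pm=\frac12(\mp\pl_a-\pl_c+\pl_{E'})\pl_{E'}\ln\BP+n_\pm$ produces the $\frac{1}{z^4}$ from $n_\pm$, the $\frac{1}{8z^2}\pl_E^2P$ from $\frac12\pl_{E'}^2$, and the $\mp\frac{1}{4\sqrt2 z}\pl_E\dot P$ from $\mp\frac12\pl_a\pl_{E'}$ (using $\pl_a=\frac{1}{\sqrt2}\pl_s+O(z^2)$ and $\dot P=\pl_sP$), while the $-\pl_c$ term is $O(1)$. Applying $\pl_{E'}^i$ and invoking the $z^4$-gap yields the stated error $O(z^{-i})$. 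For $F_\ell$ one substitutes into $\pl_{c_\ell}\pl_{E'}\ln\BP+n_\ell$, where $\pl_{c_\ell}=z\pl_{v_\ell}+O(z^3)$ gives the leading $\frac12\pl_{v_\ell}\pl_EP+n_\ell$ and the $z^2$-gap gives the $-\frac{sz^2}{2}\pl_{v_\ell}\pl_EP$ correction, with error $O(z^{3-i})$ after $\pl_{E'}^i$.

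The heart of the argument is the third estimate, the sum $\sum H/F$. The quantities $H_\pm$ built from the brackets (\ref{H's}) are individually large: since $C_\pm\sim\mp\frac{\sqrt2}{z^6}$ and $\pl_{E'}F_\pm\sim z^{-3}$, the dominant contribution $-C_\pm\,\pl_{E'}F_\pm$ makes $H_\pm\sim z^{-9}$ and a naive estimate would give $H_\pm/F_\pm\sim z^{-5}$. The point is that these divergences cancel \emph{pairwise} between the $+$ and $-$ contributions: because $n_+=n_-$ one has $H_++H_-=O(z^{-9+1})$ and $F_+-F_-=O(z^{-1})$, so one must combine $\frac{H_+}{F_+}+\frac{H_-}{F_-}$ (and add the $\ell$-sum) \emph{before} extracting leading order. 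Carrying out this combination — expanding each $H^{(1)}_\pm,H^{(2)}_\pm$ via (\ref{del's}), forming the two Wronskian brackets of (\ref{H's}), and collecting terms order by order from $z^{-5}$ down to $z^{-2}$ — is where all the genuine computation lives; the surviving $z^{-2}$ coefficient reorganizes into $\frac{1}{64z^2}\bigl(\BY-3(\pl_E^2P)(\pl_E^2\dot P)\bigr)$, with $\BY$ as in (\ref{Y}). The companion estimate for $\sum_\bullet H_\bullet F_\bullet^{(k)}/F_\bullet^2$ is obtained identically and collapses to $\frac{3}{32z}(\pl_E^3P)\bigl(\frac{\pl_E}{2z}\bigr)^{1+k}\dot P+O(z^{-k-1})$. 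I expect the \textbf{main obstacle} to be precisely this bookkeeping of mutually cancelling divergent terms and, in particular, pinning down the numerical coefficient $3$ in the remainder; the antisymmetry $a\leftrightarrow-a,\ \pl_a\to-\pl_a$ together with $n_+=n_-$ are the structural facts that organize the cancellations.

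Finally I would prove the $G$-estimate (\ref{G's}) by induction on $k$ using the recursion $G_{k+1}=\pl_{E'}G_k+\sum_\bullet H_\bullet F_\bullet^{(k)}/F_\bullet^2$. The base case $k=0$ reads $G_1=\sum H/F$, and one checks directly that the added term $\frac{3\sqrt2}{16}(F_--F_+)'P''=\frac{3}{64z^2}(\pl_E^2P)(\pl_E^2\dot P)+O(z^{-1})$ exactly absorbs the $-3(\pl_E^2P)(\pl_E^2\dot P)$ remainder of step three, leaving $\frac{\BY}{16(2z)^2}$. For the inductive step, applying $\pl_{E'}=\frac{1}{2z}\pl_E+O(z^3)$ to the hypothesis turns $\frac{\BY^{(k-1)}}{16(2z)^{k+1}}$ into $\frac{\BY^{(k)}}{16(2z)^{k+2}}$, and after a Leibniz expansion of $\pl_{E'}\bigl((F_--F_+)^{(k)}P''\bigr)$ the term $\frac{3\sqrt2}{16}(F_--F_+)^{(k+1)}P''$ reassembles on the left-hand side while the leftover $\frac{3\sqrt2}{16}(F_--F_+)^{(k)}\pl_{E'}P''$ cancels $\sum_\bullet H_\bullet F_\bullet^{(k)}/F_\bullet^2$ to the required order $O(z^{-k-1})$, using the companion estimate just established. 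This closes the induction and completes the lemma.
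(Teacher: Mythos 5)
Your proposal follows the paper's proof essentially step for step: the paper also treats the four formulas (\ref{der:F,G}) as direct substitutions of the dictionary (\ref{del's}) into (\ref{Expressions-1-time}) (the ``big gaps'' giving the clean $(\pl_E/2z)^i$ action on higher derivatives, the inlier terms $H_\ell/F_\ell=O(1)$ being negligible at leading order), and it proves (\ref{G's}) by exactly your induction, with the base case absorbing the $-3(\pl_E^2P)(\pl_E^2\dot P)$ remainder through $\frac{3\sqrt2}{16}(F_--F_+)'P''=\frac{3}{64z^2}P''\dot P''+O(z^{-1})$, and the inductive step cancelling the Leibniz leftover against the fourth formula of (\ref{der:F,G}).

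One caveat, concerning the only point where you assert structure that the paper leaves implicit. The pairwise $\pm$ cancellation you invoke is genuine but insufficient: it removes only the part of $\frac{H_+}{F_+}+\frac{H_-}{F_-}$ that is odd under $+\leftrightarrow-$, and with your own estimates ($H_++H_-=O(z^{-8})$ and $F_\pm\sim z^{-4}$) it still allows this sum to be $O(z^{-4})$, two orders short of the required $O(z^{-2})$. The remaining divergences cancel \emph{within} each $H_\pm$, between the two brackets $\left\{H^{(1)}_\pm,F_\pm\right\}_{\pl_{E'}}$ and $-\left\{H^{(2)}_\pm,F_\pm\right\}_{\frac12(\mp\pl_a-\pl_c)}$ in (\ref{H's}): for example, both brackets produce even-in-$\pm$ terms of order $z^{-8}$ proportional to $\pl_E^2\dot P$ (one coming from $C_\pm$ times the odd part of $\pl_{E'}F_\pm$, the other from $F_\pm$ times the $\frac12(\mp\pl_a-\pl_c)$-derivative of $H^{(2)}_\pm$), and these are equal and opposite. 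Such cancellations are invisible to the $\pm$ pairing and to the hypothesis $n_+=n_-$. Because your plan is in any case to expand and collect all powers of $z$ from $z^{-5}$ down to $z^{-2}$, the proof still goes through once that computation is carried out; but the ``structural facts'' you list do not by themselves guarantee the third and fourth formulas, so budget for this second layer of cancellation when executing the bookkeeping. (A minor slip in the same heuristic: $C_\pm\approx\mp\frac{3}{\sqrt2\,z^6}$ rather than $\mp\frac{\sqrt2}{z^6}$, since the term $\pm n_\mp/a$ in $C_\pm$ contributes at the same order as $\pm a$; this does not affect the order counting.)
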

 
 \proof The formulae (\ref{der:F,G}) are straightforward computations; one of them involves the expression $\BY$ introduced in (\ref{Y}). The big gaps in the series (\ref{del's}) of  $\pl_{_{\!E'}}$ is responsible for the mere action of $\left( {\pl_E}/{2z}\right)^i$, in computing higher derivatives. 
 Moreover, in the third formula, one notices that the sums $\sum_{\ell=1}^{p-2}H_\ell/F_\ell$ on the left hand side of $\stackrel{*}{=}$ actually do not play any role in the leading terms, because $H_\ell$ and $F_\ell$ both are $O(1)$. Formula (\ref{G's}) is shown by induction; namely for $k=0$, one checks, using the formulae (\ref{der:F,G}),
 \bean
\lefteqn{G_1+\frac{3\sqrt{2}}{16}(F_--F_+)'P''}\\
&=&\sum^p_{\ell=1}\frac{H_{\ell}}{F_{\ell}}+\frac{3\sqrt{2}}{16}(F_--F_+)'P''\\
&=&\frac{1}{64z^2}(\BY-3P''\dot
P'')+\frac{3\sqrt{2}}{16z^2}\left(\frac{1}{4\sqrt{2}}\dot
P''P''\right)+{\bf
O}(1)%\\
%&=&
=\frac{\BY}{64z^2}+{\bf O}(1). \eean 
Assume inductively 
\be
 G_i+\frac{3\sqrt{2}}{16}   (F_--F_+)^{(i)}P''
=\frac{\BY^{(i-1)}}{16(2z)^{i+1}}+O(z^{-i})\mbox{   for } 1\leqs i\leqs k,
\label{induction}\ee
and prove it for $i=k+1$. 
Then, using the general definition (\ref{F,G-one-time}) of $G_{k+1}$ in terms of $G_k$, formula (\ref{induction}), the derivatives $\pl_{E'}$ of $F_\pm$ as in (\ref{der:F,G}) and the last formula of (\ref{der:F,G}), one checks
\bean
\lefteqn{
G_{k+1}+\frac{3\sqrt{2}}{16}   (F_--F_+)^{(k+1)}P''
}\\
&=&\pl_{E'}G_k+\frac{H_+F_+^{(k)}}{F_+^2}+\frac{H_-F_-^{(k)}}{F_-^2}+\sum_{\ell=1}^{p-2}\frac{H_\ell F_\ell^{(k)}}{F_\ell^2}
+\frac{3\sqrt{2}}{16}   (F_--F_+)^{(k+1)}P''\\
&=&\frac{\pl_E}{2z}\left(\frac{\BY^{(k-1)}}{16(2z)^{k+1}} -\frac{3\sqrt{2}}{16}   (F_--F_+)^{(k)}P''+O(z^{-k})\right)  \\
&&~~+\frac{H_+F_+^{(k)}}{F_+^2}+\frac{H_-F_-^{(k)}}{F_-^2}+\sum_{\ell=1}^{p-2}\frac{H_\ell F_\ell^{(k)}}{F_\ell^2}
+\frac{3\sqrt{2}}{16}   (F_--F_+)^{(k+1)}P''\\
&=&
 \frac{\BY^{(k)}}{16(2z)^{k+2}}+O(z^{-k-1}),
\eean
establishing Lemma \ref{Lemma 7.4}.\qed

 By Corollary \ref{Cor:1.2}, $Q  (E'; a,c)=\ln \BP^A_n  (E';  a,c)$ satisfies the PDE (\ref{7.9}), which induces a PDE for $P (E;s,v)=\ln   \BP^A_n(T_z^{-1}(E;s,v_j))
 $, remembering (\ref{7.comp}) and (\ref{map}). As pointed out, the PDE for $ Q  (E';  a,c)$  misses to be a Wronskian by the last column. It is appropriate to do some column operations; e.g., subtracting the first from the second and then adding the second, multiplied with $P''$, to the last one; also it is convenient to multiply the columns with $2$'s and $\sqrt{2}$'s. This gives us the determinant below, which vanishes according to Corollary \ref{Cor:1.2}. The second equality
 $\stackrel{*}{=}$
 uses in a straightforward way the series expansion of Lemma \ref{Lemma 7.4} above,
 %whereas the result of the last column requires some
 %argument. 
 %
{ \footnotesize
 \begin{equation}
0=\det \left(
  \begin{array}{lllllllllll}
   2F_+&\sqrt{2}(F_-\!-\!F_+)& 2F_1&\ldots& 2F_{p-2}&~G_0
   +\frac{3\sqrt{2}}{16}(F_--F_+)P''\\
   2F_+' &\sqrt{2}(F_-\!-\!F_+)'& 2F_1'& \ldots& 2F_{p-2}'&~G_1
  +\frac{3\sqrt{2}}{16}(F_-\!-\!F_+)'P'' \\
     2F_+'' &\sqrt{2}(F_-\!-\!F_+)''& 2F_1''& \ldots& 2F_{p-2}''&~G_2
    +\frac{3\sqrt{2}}{16}(F_-\!-\!F_+)''P'' \\
    \vdots&\vdots&\vdots&&\vdots&~\vdots
    \\
     2F_+^{(p )} &\sqrt{2}(F_-\!-\!F_+)^{(p )}& 2F_1^{(p )}&
    \ldots& 2F_{p-2}^{(p )}&~
    G_p+\frac{3\sqrt{2}}{16}(F_-\!-\!F_+)^{(p)}P'' \\
    %
    %
  % F^{+'''}&F^{-'''}&F^{0'''}&
    %
\end{array}
\right) ,\no\end{equation}}

{\footnotesize
\begin{equation*}
\stackrel{*}{=}\det \left(
  \begin{array}{ccccccclllllllllll}
   \frac{2}{z^{4}}+\frac{  P''}{ (2z)^2}+O(\frac{1}{z})&
   \frac{\dot P'}{  (2z)}+O(z)&
   \frac{\pl P'}{\pl v_1}+2n_1+O(z^2)&\ldots%&F_p&G_0
   %+\frac{3\sqrt{2}}{16}(F_2-F_1)P''
    \\
    \frac{  P'''}{ (2z)^3}+O(\frac{1}{z^2})
    &\frac{\dot P''}{  (2z)^2}+O(1)&
   \frac{1}{ (2z)}\frac{\pl P''}{\!\!\pl v_1}+O(z)& \ldots
   %&F_p'&G_1
  %+\frac{3\sqrt{2}}{16}(F_2-F_1)'P''
  \\
     \frac{ P^{iv}}{ (2z)^4}+O(\frac{1}{z^3})
      &\frac{\dot P'''}{  (2z)^3}+O(\frac{1}{z})
      &\frac{ 1}{ (2z)^2}\frac{\pl P_n'''}{\!\!\pl v_1}+O(1)&
      \ldots%&F_p''&G_2
    %+\frac{3\sqrt{2}}{16}(F_2-F_1)''P''
    \\
    \vdots&\vdots&\vdots&&%\vdots
    &%\vdots
    \\
    \frac{ P^{(p+2)}}{ (2z)^{p+2}}+O(\frac{1}{z^{p+1}}) &
    \frac{\dot P^{(p+1)}}{  (2z)^{p+1}}+O(\frac{1}{z^{p-1}})&
     \frac{1 }{ (2z)^{p}}\frac{\pl P^{(p+1)}}{\!\!\pl v_1}+O(\frac{1}{z^{p-2}})& \ldots%&F_p^{(p )}&
    %G_p+\frac{3\sqrt{2}}{16}(F_2-F_1)^{(p)}P''
    \\
   \end{array}
\right.  %\label{7.4.11}
\no\end{equation*} }

{%\footnotesize
\begin{equation}
   \left.
  \begin{array}{cccccccc}
 && \ldots& \frac{\pl P'}{\pl v_{p-2}}\!+\!2n_{p\!-\!2}
   +O(z^2)
    &  \frac{3}{16(2 z)}\dot P'P''+O(z)\\
 &&    \ldots&\frac{ 1}{ (2z)}\frac{\pl P''}{\!\!\pl v_{p-2}}+O(z)
     &\frac{\BY}{16 (2z)^2}+O(1) \\
&&     \ldots&\frac{ 1}{ (2z)^2}\frac{\pl
P'''}{\!\!\pl v_{p-2}}+
     O(1)&\frac{\BY'}{16 (2z)^3}+O(\frac{1}{z}) \\
 & &     & \vdots
    &\vdots&&%\vdots
    &%\vdots
    \\
 &&    \ldots&\frac{ 1}{ (2z)^{p}}
     \frac{\pl P^{(p+1)}}{\!\!\pl v_{p-2}}+
     O(\frac{1}{z^{p-2}})&
    \frac{\BY^{(p-1)}}{16 (2z)^{p+1}}+O(\frac{1}{z^{p-1}}) \\
    %
    %
  % F^{+'''}&F^{-'''}&F^{0'''}&
    %
\end{array}
\right) % ,\label{7.4.11}
\no\end{equation} }

$$
\hspace*{-0.5cm}\stackrel{**}{=}
\frac{C}{z^{6+p(p+1)/2}} {\cal W}_p\left(
 \pl_E^2\frac{\pl}{\pl s} P,\frac{\pl}{\pl v_1}\pl_E^2 P,
  \ldots,\frac{\pl}{\pl v_{p-2}}\pl_E^2 P,
   \BY\right)+O(\frac{1}{z^{4+p(p+1)/2}}).$$
   The last equality $\stackrel{**}{=}$ stems from the fact
   that the matrix consists of columns with increasing
   powers in $1/z$, except for the element $(1,1)$,
   whose leading term is $2/z^4$. Therefore the leading
   contribution of the determinant of the matrix will be given by
   $$
   \frac{2}{z^4}\times \mbox{ the determinant of the $(1,1)$-
   minor},
    $$
which indeed leads to equality $\stackrel{**}{=}$.  Also the term $-8s\pl_E^2\dot P_n$ could be removed by adding
$8s\times$ (the first column); but we prefer not to do this, in view of the conjecture \ref{conjecture}.  Taking the limit, when $n\rg \iy$, leads to the PDE for $P=\lim\log \BP_n$.  In
the end, one must undo the slight renaming (\ref{7.scaling}) of the variables $s=\tau/\sqrt{8},~v_j=2^{1/4}u_j,~x_i=\xi_i/2^{5/4}$ and go back to the $(\tau,u_j,\xi)$-variables, yielding 
$ 
\frac12 \BY=8^{3/2}\BX$, with $\BX$ as defined in (\ref{X}). This yields PDE (\ref{Wronskian_2}), which ends the proof of Theorem \ref{Th: main}.\qed

{\medskip\noindent{\it Very sketchy Proof of Corollary \ref{cor:no inl}:\/} } A detailed proof appears in Adler-Orantin-van Moerbeke \cite{AOvM}. In the absence of inliers ($p=2$), the Wronskian (\ref{Wronskian_2}) is the determinant of a $2\times 2$ matrix:
\be
0= {\cal W}_2\left[
     \pl_{_{\!E}}^2 \pl_\tau \ln \BP^{\cal P},
      ~\BX
 \right]_{\pl_{_{\!E}}}=\left\{ \pl_{_{\!E}}^2 \pl_\tau  \ln \BP^{\cal P},~\BX \right\}_{\pl_{_{\!E}}}
 .\label{Wr1}\ee
 Performing the same scaling limit on an {\em asymmetric} situation, with $2nq$ particles forced to $-\sqrt{n}$ and $2n(1-q)$ particles forced to $\sqrt{n}$ for $0<q<1$, with $q\neq 1/2 $,  leads to a PDE for the leading term having the form 
\be
  \left\{ \pl_{_{\!\!E}}^3    \ln \BP^{\cal P},~\BX \right\}_{\pl_{_{\!E}}}
 =0.\label{Wr2}\ee
 Thus  $\ln \BP^{\cal P} $ satisfies two different PDE's, (\ref{Wr1}) and (\ref{Wr2}), given by two Wronskians of $\BX$ with $\pl_{_{\!E}}^2 \pl_\tau \ln \BP^{\cal P}$ and $\pl_{_{\!\!E}}^3    \ln \BP^{\cal P}$. Then a functional-theoretical argument explained in \cite{AOvM} implies $\BX=0$.\qed
For inliers, we further conjecture -in analogy with the result in Corollary \ref{cor:no inl}- the validity of equations (\ref{Xu}) and (\ref{Xu1}), as stated in Conjecture \ref{conjecture}.

\section{Appendix: evaluation of the integral over the full range}
In this section we prove formula (\ref{eq:int_over_R}), i.e., we show that
\bean
  \lefteqn{\int_{\BR^{mN}}\Delta_N(v_1)\prod_{\ell=1}^p\(\frac{\Delta_{n_\ell}(\v\ell m)}{n_\ell!}\prod_{i=1}^{n_\ell}
    e^{-\frac12\sum\limits_{k=1}^{m}{{\v\ell{k;i}}^2}+\sum\limits_{k=1}^{m-1}{c_k\v\ell{k;i}\v\ell{k+1;i}}+{\b\ell1\v\ell{m;i}}}\)
    \prod_{k=1}^m dv_k}\hskip3cm\\
&=&g_n(c)\,e^{-\frac{\J mm}2\sum^p_{\ell=1}n_\ell{\b\ell1}^2}
        \prod_{1\leqs i<j\leqs p}(\b j1-\b i1)^{n_in_j},
\eean
with $g_n(c)$ a function, depending on $c_1,\dots,c_{m-1}$ and $n$ only, computed below. In view of the
representation of the above integral as the determinant of a moment matrix, as in (\ref{eq:tau_det}), it suffices
to prove that
\be
  \det
  \begin{pmatrix}
    M^{(1)}\\
    \vdots\\
    M^{(p)}
  \end{pmatrix}
  =
  g_n(c)\,e^{-\frac{\J mm}2\sum^p_{\ell=1}n_\ell{\b\ell1}^2}\prod_{1\leqs i<j\leqs p}(\b j1-\b i1)^{n_in_j},
  \label{eq:david}
\ee
where, for $\ell=1,\dots,p$, the $n_\ell\times N$ matrix $M^{(\ell)}$ is defined by 
\be
  M^{(\ell)}:=\left(\int_{\BR^m}w_1^jw_m^ie^{-\frac12\sum_{k=1}^m w_k^2+\sum_{k=1}^{m-1}c_kw_kw_{k+1}+\b\ell1 w_m}
    \prod_{k=1}^m dw_k\right)_{
     \renewcommand{\arraystretch}{0.8}
        \tiny{
          \begin{array}{c} 
            0\leqs i<n_\ell\\
            0\leqs j<N
          \end{array}}}.\no
\ee
Introducing for $a,b\in\BR$ the zero moment\footnote{Using
$$
 \int_{\BR^m}e^{-\frac{1}{2}\la Qw,w\ra+\la \ell,w\ra}
 dw_1\ldots dw_m
 =
  \frac{(2\pi)^{m/2}}{\sqrt{\det Q}}
e^{\frac{1}{2}\la Q^{-1}\ell,\ell\ra}
 ,$$
 for $Q:=-\J{}{}^{-1}$ and $\ell:=(a,0,\ldots,0,b)$.}
\bea
  m(a,b)&:=&\int_{\BR^m}e^{-\frac12(\sum_{k=1}^m w_k^2-2\sum_{k=1}^{m-1}c_kw_kw_{k+1})+aw_1+bw_m}
    \prod_{k=1}^m dw_k\no\\
  &=&(2\pi)^{m/2}\sqrt{-\det\J{}{}}\,e^{-\frac12(\J11 a^2+2\J 1mab+\J mmb^2)},\label{eq:mab}
\eea
we can express all the entries of $M^{(\ell)}$ as
\be
  M^{(\ell)}_{ij}=\frac{\partial^j}{\partial a^j}\,\frac{\partial^i}{\partial b^i}\, m(0,\b\ell1).
%\right|_{
%     \renewcommand{\arraystretch}{0.8}
%        \tiny{
%          \begin{array}{l} 
%            a=0\\
%            b=\b\ell1
%          \end{array}}}.
  \label{eq:mab_ders}
\ee
Let us first prove (\ref{eq:david}) in the case in which all $n_\ell$ are equal to $1$ (so that $p=N$). Then, it
follows from (\ref{eq:mab}) and (\ref{eq:mab_ders}) that, for $\ell=1,\dots,p$, the vector $M^{(\ell)}$ is, modulo
a constant which depends on $c_1,\dots,c_{m-1}$ and $N$, but not on $\ell$, of the form
\be
  M^{(\ell)}\sim e^{-\frac{\J mm}2{\b\ell1}^2}\left(1,\a_1(\b\ell1),\dots,\a_{N-1}(\b\ell1)\right),\no
\ee
where $\a_j(\b\ell1)$ is a polynomial in $\b\ell1$ of degree $j$, with leading term $\(-\J1m\b\ell1\)^j$, and whose
coefficients are independent of $\ell$, but depend on $c$. It follows that, if all $n_\ell$ are equal to $1$,
then\footnote{Note $g_N(c)=(-\J1m)^{N(N-1)/2}(-\det\J{}{})^{N/2}(2\pi)^{Nm/2},$ as easily follows from the
argument, and finally in the full case $g'_N(c)=\prod_{\ell=1}^{p}\prod_{k=1}^{n_\ell-1}k!g_N(c)$.}
\be \det
  \begin{pmatrix}
    M^{(1)}\\
    \vdots\\
    M^{(p)}
  \end{pmatrix}
  =
  g_N(c)\,e^{-\frac{\J mm}2\sum^p_{\ell=1}{\b\ell1}^2}\prod_{1\leqs i<j\leqs p}(\b j1-\b i1),
  \label{eq:app_p=N}
\ee
proving (\ref{eq:david}) in that case. Let us show how the other extreme case, where there is only one $n_\ell$ (so
that $p=1$ and $n_1=N$), is derived from it. Let $f:\BR\to\BR^N$ be a smooth function and let $\beta\in\BR$. Then
\be
  \det
  \begin{pmatrix}
    f(\beta)\\
    f'(\beta)\\
    \vdots\\
    f^{(N-1)}(\beta)
  \end{pmatrix}
  =
  \lim_{\beta_1,\dots,\beta_N\to\beta}
  \frac{\prod_{k=1}^{N-1} k!}{ \prod_{1\leqs i<j\leqs N}(\beta_j-\beta_i)}
  \det
  \begin{pmatrix}
    f(\beta_1)\\
    f(\beta_2)\\
    \vdots\\
    f(\beta_N)
  \end{pmatrix},
\ee
as follows by writing each $f(\beta_k)$ as a Taylor series around $f(\beta)$. Applied to 
\be
    f(\beta):=e^{-\frac{\J mm}2{\beta}^2}\left(1,\a_1(\beta),\dots,\a_{N-1}(\beta)\right),\no
\ee
and $\beta_1,\dots,\beta_N=\b\ell1,\dots,\b\ell N$ we conclude using (\ref{eq:mab_ders}) and (\ref{eq:app_p=N})
that, when $p=1$, then
\bean
  \det\(M^{(1)}\)&=&\lim_{\beta_1,\dots,\beta_N\to\b11}\prod_{k=1}^{N-1}k!\,g_N(c)e^{-\frac{\J mm}2\sum^N_{\ell=1}\beta_\ell^2}=g_n(c)e^{-\frac{\J mm}2N{\b11}^2},
\eean
proving (\ref{eq:david}) in this case. The proof of formula (\ref{eq:david}) in the intermediate case, when there
are several $n_\ell$, which are not equal to $1$, follows in a similar way from (\ref{eq:app_p=N}), taking the
limit $\beta_i\to \b j1$, for $i=1,\dots,N$, with $n_\ell$ of the $\beta_i$ going to $\b\ell1$, namely
$\beta_1,\dots,\beta_{n_1}\to \b11$, and $\beta_{n_1+1},\dots,\beta_{n_1+n_2}\to \b21$, and so on, where now one
divides by a product of $p$ Vandermonde determinants, each going with a collapsing group.

\def\cydot{\leavevmode\raise.4ex\hbox{.}}

\end{document}